\newtheorem{thm}{Theorem}[section]
\newtheorem{lemma}[thm]{Lemma}
\newtheorem{cor}[thm]{Corollary}
\newtheorem{prop}[thm]{Proposition}
\newtheorem{conj}[thm]{Conjecture}
\newtheorem{obs}[thm]{Observation}
\theoremstyle{definition}
\newtheorem{defn}[thm]{Definition}
\newcommand{\scolon}{:}
\newcommand{\cA}{\mathcal{A}}
\newcommand{\FF}{\mathcal{F}}
\newcommand{\GG}{\mathcal{G}}
\newcommand{\HH}{\mathcal{H}}
\newcommand{\UU}{\mathcal{U}}
\newcommand{\VV}{\mathcal{V}}
\newcommand{\GGn}{\GG_*}
\newcommand{\STOP}{\texttt{STOP}}
\newcommand{\Delf}[4]{\Delta_{(\ell_0{#1}, \ell_1{#2})}^{(i_0{#3},i_1{#4})}}
\newcommand{\Delp}{\Delta_{(\ell_0',\ell_1')}^{(i_0',i_1')}}
\newcommand{\Delnp}{\Delta_{(\ell_0,\ell_1)}^{(i_0',i_1')}}
\newcommand{\Del}{{\Delf{}{}{}{}}}
\newcommand{\MDel}{M^{(i_0,i_1)}_{(\ell_0,\ell_1)}}
\newcommand{\MDelp}{M^{(i_0',i_1')}_{(\ell_0,\ell_1)}}
\title{On the number of sets with a given doubling constant}
\author{Marcelo Campos
        }
\address{IMPA, Estrada Dona Castorina 110, Jardim Bot\^anico, Rio de Janeiro, RJ, Brasil}
\email{marcelo.campos@impa.br}
\thanks{Research partially supported by CNPq.}
\begin{document}
\begin{abstract}
We study the number of $s$-element subsets $J$ of a given abelian group $G$, such that $|J+J|\leq K|J|$. Proving a conjecture of Alon, Balogh, Morris and Samotij, and improving a result of Green and Morris, who proved the conjecture for $K$ fixed, we provide an upper bound on the number of such sets which is tight up to a factor of $2^{o(s)}$, when $G=\mathbb{Z}$ and  $K=o(s/(\log n)^3)$. We also provide a generalization of this result to arbitrary abelian groups which is tight up to a factor of $2^{o(s)}$ in many cases. The main tool used in the proof is the asymmetric container lemma, introduced recently by Morris, Samotij and Saxton.
\end{abstract}
\maketitle
\section{Introduction}
In additive combinatorics one of the main objectives of the field is, given an abelian group $G$ and a finite subset $A\subset G$, to understand the relation between the sumset $A+A$ and $A$.
In this direction, a fundamental result of Freiman \cite{Freiman59} says that for $G=\mathbb{Z}$, if $|A+A|\leq K|A|$ (we say that $A$ has \textit{doubling constant} $K$), then there is a generalized arithmetic progression $P$ such that $A\subset P$, the dimension of $P$ is at most $f(K)$, and $|P|\leq f(K)|A|$ for some function $f$. This was later generalized to the setting of arbitrary abelian groups by Green and Ruzsa \cite{GreenRuzsa2007Freiman}, but many fundamental questions remain open, for example, whether $f$ can be a polynomial.\par 
Another famous problem in additive combinatorics is the Cameron-Erd\H{o}s  conjecture about the number of sum-free subsets of $[n]$, which was solved independently by Green~\cite{green2004cameron} and Sapozhenko~\cite{sapozhenko03:_camer_erd}. More recently Alon, Balogh, Morris and Samotij~\cite{alon2013refinement} obtained a refinement of the Cameron-Erd\H{o}s conjecture  using an early form of the method of hypergraph containers. In order to prove this refinement of the Cameron-Erd\H{o}s conjecture, they needed a bound on the number of $s$-sets $A\subset [n]$ with doubling constant $K$. They moreover conjectured that the following stronger (and, if true, best possible) bound holds.
\begin{conj}[Alon, Balogh, Morris and Samotij]\label{conj:ABMS}
For every $\delta > 0$, there exists $C > 0$ such that the following holds. If $s \geq C \log n$ and if $K \leq s/C$, then there are at most
$$2^{\delta s} \binom{\frac{1}{2} Ks}{s}$$
sets $J \subset [n]$ with $|J| = s$ and $|J + J| \le K|J|$. 
\end{conj}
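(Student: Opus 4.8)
The plan is to follow the hypergraph container paradigm: reduce the counting problem to producing a small family of \emph{containers}, each of size about $\tfrac12 Ks$, that together cover all the sets we wish to count.

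\textbf{Reduction to a container statement.} Write $\cS = \{A \subseteq [n] : |A| = s,\ |A + A| \le Ks\}$. I will aim to produce a family $\CC$ of subsets of $[n]$, together with a map $A \mapsto \sigma(A)$ assigning to each $A \in \cS$ a \emph{fingerprint} $\sigma(A) \subseteq A$, such that:
\begin{enumerate}[label=\textup{(\roman*)}]
\item every $A \in \cS$ is contained in a container $C = C(\sigma(A)) \in \CC$ that is determined by $\sigma(A)$;
\item every $C \in \CC$ satisfies $|C + C| \le (1 + o(1))\,Ks$, where $o(1) \to 0$ as $s \to \infty$ uniformly in the remaining parameters; and
\item $|\sigma(A)| \le q$ for some parameter $q = q(n,s,K)$ with $q \log n = o(s)$, so that $|\CC| \le \binom{n}{q} = 2^{o(s)}$.
\end{enumerate}
Granting (i)--(iii), the bound follows at once over $\mathbb{Z}$: the elementary inequality $|C + C| \ge 2|C| - 1$ turns (ii) into $|C| \le (\tfrac12 + o(1))Ks$, and therefore
\[
|\cS| \ \le\ \sum_{C \in \CC} \binom{|C|}{s} \ \le\ |\CC| \cdot \binom{(\tfrac12 + o(1))Ks}{s} \ \le\ 2^{o(s)} \binom{\tfrac12 Ks}{s},
\]
where the last step is the routine binomial estimate $\binom{(1+\eta)m}{s} \le 2^{o(s)} \binom{m}{s}$, valid for $m \ge s$ and $\eta = o(1)$; taking $C$ large absorbs the $2^{o(s)}$ factor into $2^{\delta s}$. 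For an arbitrary abelian group $G$ the only modification is that ``(ii)~$\Rightarrow$~$|C| \le (\tfrac12+o(1))Ks$'' is replaced by the appropriate Kneser/Cauchy--Davenport-type bound relating $|C|$ to $|C+C|$; this is why the estimate is best possible for $\mathbb{Z}$ and torsion-free groups and merely near-optimal for some other groups.

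\textbf{Producing the containers.} To construct $\CC$ I would apply the asymmetric container lemma of Morris, Samotij and Saxton to the \emph{addition hypergraph} $\HH$ on vertex set $V_1 \sqcup V_2$, where $V_1 = [n]$ indexes potential elements of $A$, $V_2 = \{2,\dots,2n\}$ indexes potential elements of $A+A$, and the edges of $\HH$ are the triples $\{x, y, x+y\}$ with $x, y \in V_1$. The reason the asymmetric version is needed is that ``$|A+A|$ is small'' is not an ordinary independent-set condition; the natural object is the two-sided pair $(A, A+A)$, which is exactly a pair $(S, T)$ with $S \subseteq V_1$, $T \subseteq V_2$, $S + S \subseteq T$ and $|T| \le K|S|$. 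The lemma then returns a bounded family of container pairs $(C, D)$ and, for each $A \in \cS$, a fingerprint inside $A$ (together with a correspondingly small exposed portion of $A + A$) pinning down a pair with $A \subseteq C$ and $A + A \subseteq D$, each container being \emph{almost additively closed} in the sense that only $o(s^2)$ pairs $(x,y) \in C^2$ have $x + y \notin D$. Property~(ii) is then read off: since $A + A \subseteq D$ and $|A+A| \le Ks$, exposing $A+A$ through the fingerprint keeps $|D| \le (1+o(1))Ks$, and almost-closedness upgrades this to the set bound $|C + C| \le (1 + o(1))Ks$ --- it is here that a stability form of the inequality $|C+C| \ge 2|C|-1$ (equivalently, a choice of $\HH$ recording every element of the sumset rather than merely a positive density of its representations) is essential, and this is really where the factor of $2$ is won. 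For (iii), the admissible threshold $\tau$ in the container lemma is controlled by the typical number $\asymp s/K$ of representations of an element of $A + A$, yielding fingerprints of size $q = O\big(K (\log n)^2\big)$; hence $|\CC| \le (3n)^q = 2^{O(K(\log n)^3)}$, which is $2^{o(s)}$ precisely when $K = o\big(s/(\log n)^3\big)$. This is why the argument establishes Conjecture~\ref{conj:ABMS} in that range and not yet for the full range $K \le s/C$.

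\textbf{The main obstacle.} Everything outside the previous paragraph --- the reduction, the binomial bookkeeping, the passage to general groups --- is routine. The crux is to run the asymmetric container lemma with parameters that deliver (ii) and (iii) \emph{simultaneously}: making the containers almost additively closed forces $\tau$ up, keeping $|\CC| = 2^{o(s)}$ forces $\tau$ down, and reconciling these requires good control of the codegrees of $\HH$ --- that is, a supersaturation-type bound on how many $A \in \cS$ are compatible with a prescribed small family of elements and sums. This codegree analysis is the only point at which the global hypothesis $|A + A| \le Ks$ must be converted into genuinely local structural information about $A$, and sharpening the polylogarithmic losses in it is exactly what governs the attainable range of $K$.
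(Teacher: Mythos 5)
Your overall architecture is the same as the paper's: build a small family of containers via an iterated application of the asymmetric container lemma to a $(1,2)$-bounded addition hypergraph, where one side of the pair tracks a subset of the sumset and the other a superset of $J$, and use a supersaturation statement to control the codegrees and to bound the size of the terminal containers. The parameter bookkeeping ($K=o(s/(\log n)^3)$) is also in the right ballpark.

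There is, however, a genuine gap in the reduction, not merely an imprecision. Your property (ii) asserts that every container $C$ satisfies $|C+C|\le(1+o(1))Ks$, and you derive $|C|\le(\tfrac12+o(1))Ks$ via $|C+C|\ge 2|C|-1$. But the container method does \emph{not} produce sets with small sumset. What it actually produces (and what the paper's Theorem~\ref{thm:mcontainer}~\ref{item:fewedges} records) is a pair $(A,B)$ with $|A|\le Ks$ and at most $\epsilon^2|B|^2$ pairs $(b_1,b_2)\in B^2$ with $b_1+b_2\notin A$. Your attempted upgrade --- ``almost-closedness upgrades this to the set bound $|C+C|\le(1+o(1))Ks$'' --- is false in general: the $o(s^2)$ pairs summing outside $A$ could contribute up to $o(s^2)$ new, distinct sumset elements, and $o(s^2)$ can dwarf $Ks$ when $K\ll s$. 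The correct tool is a Pollard-type supersaturation bound (and for general abelian groups, a Hamidoune--Serra variant, Theorem~\ref{thm:hamserra} and Corollary~\ref{supersat} in the paper): if too many pairs of $B$ summed into a set of size $\le Ks$ then $|B|$ is at most $(\tfrac12+\epsilon)(Ks+\beta)$. This goes directly from ``few pairs sum outside $A$'' to ``$|B|$ is small'' without ever controlling $|B+B|$. Your mention of ``Kneser/Cauchy--Davenport'' for the general-group case is symptomatic of the same confusion: Kneser lower-bounds $|B+B|$, whereas what you need is a lower bound on the number of \emph{popular} sums (i.e., $\sum_x\min(1_B*1_B(x),t)$), which is a genuinely different type of statement.

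In short: the container construction, the codegree/supersaturation tension, and the parameter balancing are all correctly identified, but the final size deduction must run through Pollard's theorem applied to the almost-closedness of the containers; the sumset estimate $|C+C|\le(1+o(1))Ks$ is neither provable by your argument nor needed, and substituting it in is where the proof breaks.
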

The conjecture was later confirmed for $K$ constant by Green and Morris \cite{GreenMorris16}; in fact they proved a slightly more general result: for each fixed $K$ and as $s\to \infty$, the number of sets $J\subset [n]$ with $|J|=s$ and $|J+J|\leq K|J|$ is at most
\[2^{o(s)}\binom{\frac{1}{2}Ks}{s}n^{\lfloor K+o(1) \rfloor}.\] The authors of \cite{GreenMorris16} used this result to bound the size of the largest clique in a random Cayley graph and recently the result was also applied by Balogh, Liu, Sharifzadeh and Treglown \cite{BLST2018maxsumfree} to determine the number of maximal sum-free sets in $[n]$.\par
Our main theorem confirms Conjecture \ref{conj:ABMS} for all $K=o(s/(\log n)^3)$.
\begin{thm}\label{thm:mainpr}
Let $s,n$ be integers and $2\leq K\leq o\big(\frac{s}{(\log n)^3}\big)$. 
The number of sets $J\subset [n]$ with $|J|=s$ such that $|J+J|\leq K|J|$ is at most 
 \[ 2^{o(s)}\binom{\frac{1}{2}Ks}{s}.\]
\end{thm}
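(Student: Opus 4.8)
The plan is to run a container argument. Write $\mathcal{A}$ for the family of sets $J\subseteq[n]$ with $|J|=s$ and $|J+J|\le Ks$; we may assume $Ks<2n$, since otherwise $\binom{\frac12 Ks}{s}\ge\binom ns\ge|\mathcal{A}|$ and there is nothing to prove. I will produce a family $\mathcal{C}$ of \emph{containers}, each a subset of $[n]$, with $|\mathcal{C}|\le 2^{o(s)}$, such that every $J\in\mathcal{A}$ lies inside some $C\in\mathcal{C}$ with $|C|\le(1+o(1))\tfrac12 Ks$. Granting this,
\[
|\mathcal{A}|\ \le\ \sum_{C\in\mathcal{C}}\binom{|C|}{s}\ \le\ 2^{o(s)}\binom{(1+o(1))\tfrac12 Ks}{s}\ =\ 2^{o(s)}\binom{\tfrac12 Ks}{s},
\]
where the last equality uses $K\ge 2$ (so that inflating the top of $\binom{Ks/2}{s}$ by a factor $1+o(1)$ costs only a $2^{o(s)}$ factor) together with $\log n=o(s)$, which holds because $s\gg K(\log n)^3\ge 2(\log n)^3$; the same estimate $\log n=o(s)$ is what makes fingerprints of size $o(s/\log n)$ good enough below.

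To build $\mathcal{C}$ I consider the hypergraph $\mathcal{H}$ on vertex set $[n]\sqcup[2n]$ whose edges are the triples $\{a,b,a+b\}$ with $a,b\in[n]$, $a\ne b$ --- two vertices in the first part, one in the second (the degenerate sums $2a$ are harmless and I suppress them). For $J\in\mathcal{A}$ the set $\sigma(J):=J\cup\big([2n]\setminus(J+J)\big)$ is an \emph{independent set} of $\mathcal{H}$: if $a,b\in J$ then $a+b\in J+J$, so $\{a,b,a+b\}\not\subseteq\sigma(J)$. Now $\mathcal{H}$ has all pairwise codegrees at most $1$, maximum degree $O(n)$ and $e(\mathcal{H})=\Theta(n^2)$, which is exactly the ``almost bipartite'' structure exploited by the asymmetric container lemma of Morris, Samotij and Saxton. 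Applying it --- and iterating to drive the edge density down --- I obtain a family of containers, each indexed by a small \emph{fingerprint}, such that every $\sigma(J)$ lies in a container $C$ with $e(\mathcal{H}[C])\le\eps\, e(\mathcal{H})$ for a target $\eps=o(Ks/n^2)$ (say $\eps=Ks/(n^2\log n)$, so that $\log(1/\eps)=O(\log n)$), and --- crucially --- with fingerprints small enough that the number of containers is at most $n^{o(s/\log n)}=2^{o(s)}$. It is here that the hypothesis $K=o\big(s/(\log n)^3\big)$ is consumed: the three factors of $\log n$ are split between the size of a fingerprint and the $\log n$ ``bits'' each of its elements carries.

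It remains to read off the size of $C_1:=C\cap[n]$ for a container $C\supseteq\sigma(J)$. Since $\sigma(J)\subseteq C$, the set $D:=[2n]\setminus(C\cap[2n])$ satisfies $D\subseteq J+J$, hence $|D|\le Ks$. Moreover $e(\mathcal{H}[C])$ counts (up to the harmless factor from ordered versus unordered pairs) the pairs $(a,b)\in C_1\times C_1$ with $a+b\notin D$, so the number of \emph{distinct} elements of $C_1+C_1$ lying outside $D$ is $O\big(e(\mathcal{H}[C])\big)=o(Ks)$. Therefore $|C_1+C_1|\le|D|+o(Ks)=(1+o(1))Ks$; and since $C_1\subseteq\mathbb{Z}$ we have the trivial bound $|C_1+C_1|\ge 2|C_1|-1$, whence $|C_1|\le(1+o(1))\tfrac12 Ks$, as required. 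This last inequality is the one and only point where working in $\mathbb{Z}$ rather than in an arbitrary abelian group is used, and it is the source of the factor $\tfrac12$ (in the more general statement one replaces $2|C_1|-1$ by whatever lower bound on $|C_1+C_1|$ the ambient group affords).

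\textbf{The main obstacle} is the parameter balancing in the second step. A direct application of the Balogh--Morris--Samotij container theorem to $\mathcal{H}$ forces a codegree parameter $\tau\gtrsim n^{-1/2}$ (the binding constraint comes from the $3$-uniformity), hence fingerprints of size $\gtrsim\sqrt n\log n$ --- useless once $s\lesssim\sqrt n$. Pushing the fingerprint down to $o(s/\log n)$ genuinely requires the \emph{asymmetry} of $\mathcal{H}$ (every edge meets the ``sum'' part $[2n]$ in a single vertex), and most likely a preliminary structural reduction as well: e.g.\ using Pl\"unnecke--Ruzsa estimates and a covering lemma to first break $J$ into $O(K)$ pieces, each almost contained in a set of controlled doubling, or restricting $\mathcal{H}$ to the ``popular'' sums, so that the container machinery runs with a much smaller $\tau$. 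Carrying this through while keeping $\eps$ small enough for the sumset computation of the third paragraph, and losing only $(\log n)^3$ rather than a larger power, is where the real work of the proof lies.
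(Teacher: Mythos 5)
Your high-level framework---encoding $J+J\subseteq I$ as independence in a $(1,2)$-bounded hypergraph on $[n]\sqcup[2n]$ and iterating the asymmetric container lemma---is indeed what the paper does, and your reduction of the count to bounding $|\mathcal{C}|$ and $|C_1|$ is the right shape. The gap is in the endgame. You aim to drive the \emph{absolute} edge count $e(\mathcal{H}[C])$ down to $o(Ks)$ and then read off $|C_1|\le(1+o(1))\tfrac12 Ks$ from the trivial inequality $|C_1+C_1|\ge 2|C_1|-1$. That target is unachievably strong, and you seem to sense this in your final paragraph without identifying the fix. The iterated container lemma affords only a \emph{relative} density condition: after it stops (which must happen within $O(\epsilon^{-2}\log n)$ rounds to keep the total fingerprint of size $o(s/\log n)$), one has at most $\epsilon^2|B|^2$ pairs $(b_1,b_2)\in B\times B$ with $b_1+b_2\notin A$, where $(A,B)$ is the terminal container and the admissible $\epsilon$ is roughly $(K/s)^{1/6}(\log n)^{1/2}$. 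Since $|B|$ can still be of order $Ks$, the number of bad pairs is of order $\epsilon^2 K^2s^2$, which is far larger than $Ks$; forcing $\epsilon=o(1/\sqrt{Ks})$ to make your computation go through would blow the fingerprint up to $\omega(s/\log n)$ and destroy the $2^{o(s)}$ bound.

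The missing ingredient is supersaturation for sumsets, not a further structural reduction. The paper invokes a Pollard-type theorem (Corollary 3.3; in $\mathbb{Z}$ essentially Pollard's theorem): if $|B|\ge\big(\tfrac12+\epsilon\big)|A|$ with $0<\epsilon<\tfrac12$, then at least $\epsilon^2|B|^2$ ordered pairs $(b_1,b_2)\in B^2$ have $b_1+b_2\notin A$. Its contrapositive says that if the container $(A,B)$ has at most $\epsilon^2|B|^2$ bad pairs and $|A|\le Ks$, then $|B|\le\big(\tfrac12+\epsilon\big)Ks$. This upgrades the weak relative-density output of the container iteration to the desired bound on $|B|$, and it does so for an $\epsilon$ that need only be $o(1)$, which is exactly what lets the fingerprint accounting close. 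Your trivial inequality $|C_1+C_1|\ge 2|C_1|-1$ is far too coarse to substitute for this, and your suggestion of Pl\"unnecke--Ruzsa and covering lemmas is a detour; Pollard-type supersaturation (and its abelian-group generalization via the quantity $\alpha(U,V)$ and the Hamidoune--Serra argument) is the tool.
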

We will in fact prove stronger bounds on the error term than those stated above, see Theorem \ref{thm:main}. Nevertheless, we are unable to prove the conjecture in the range $K=\Omega (s/(\log n)^3)$, and actually the conjecture is false for a certain range of values of $s$ and $K\gg s/\log n$.
More precisely, for any integers $n,s$, and any positive numbers $K,\epsilon$ with $\min\{s,n^{1/2-\epsilon}\}\geq K\geq \frac{4\log(24C)s}{\epsilon\log n}$, there are at least \[\binom{\frac{n}{2}}{\frac{K}{4}}\binom{\frac{K s}{8}}{s-\frac{K}{4}}\geq \binom{CKs}{s}\] sets $J\subset [n]$ with $|J|=s$ and $|J+J|\leq Ks$.
The construction\footnote{We would like to thank Rob Morris for pointing out this construction.} is very simple: let $P$ be an arithmetic progression of size $Ks/8$ and set $J=J_0\cup J_1$, where $J_0$ is any subset of $P$ of size $s-K/4$, and $J_1$ is any subset of $[n]\setminus P$ of size $K/4$. For convenience we provide the details in the appendix. \par
Our methods also allow us to characterize the typical structure of an $s$-set with doubling constant $K$, and obtain the following result.
\begin{thm}\label{thm:stabilitypr}
Let $s,n$ be integers and $2\leq K\leq o\big( \frac{s}{(\log n)^3}\big)$. For almost all sets $J\subset [n]$ with $|J|=s$ such that $|J+J|\leq K|J|$, there is a set $T\subset J$ such that $J\setminus T$ is contained in an arithmetic progression of size $\frac{1+o(1)}{2}Ks$ and $|T|= o(s)$.
\end{thm}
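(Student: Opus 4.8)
The plan is to deduce this from a \emph{stable} refinement of the counting argument behind Theorem~\ref{thm:main}, following the standard template: the atypical sets should form an exponentially small fraction of all sets with small doubling. Say that $J\subset[n]$ with $|J|=s$ is \emph{$\delta$-scattered} if $|J\setminus P|>\delta s$ for every arithmetic progression $P$ with $|P|\le(1+\delta)\tfrac{Ks}{2}$. We may assume $\tfrac12 Ks\le n$: otherwise $[n]$ itself is an arithmetic progression of size at most $\tfrac{1+o(1)}{2}Ks$ and the statement holds with $T=\emptyset$. The theorem will follow from the quantitative assertion that \emph{for every $\delta>0$ there is $c(\delta)>0$ such that the number of $\delta$-scattered sets $J\subset[n]$ with $|J|=s$ and $|J+J|\le K|J|$ is at most $2^{-c(\delta)s}\binom{\frac12 Ks}{s}$}. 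Indeed, there are at least $\binom{\frac12 Ks}{s}$ sets $J\subset[n]$ with $|J|=s$ and $|J+J|\le K|J|$ — namely all $s$-subsets of a fixed arithmetic progression of size $\lceil\tfrac12 Ks\rceil$, each of which has $|J+J|\le Ks=K|J|$ — so the $\delta$-scattered ones form a $2^{-c(\delta)s}$-fraction of all such $J$. Taking $\delta=\delta_n\to0$ slowly enough that $c(\delta_n)s\to\infty$, for all but an $o(1)$-fraction of $s$-sets $J$ with $|J+J|\le K|J|$ there is an arithmetic progression $P$ with $|P|\le(1+\delta_n)\tfrac{Ks}{2}$ and $|J\setminus P|\le\delta_n s$; setting $T:=J\setminus P$ gives $|T|=o(s)$ and $J\setminus T\subset P$ with $|P|=\tfrac{1+o(1)}{2}Ks$, as required.

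It remains to establish the quantitative assertion, and for this we re-run the proof of Theorem~\ref{thm:main}. Via the asymmetric container lemma of Morris--Samotij--Saxton, that proof produces a family $\mathcal{C}$ of containers with (i) every $J$ with $|J+J|\le K|J|$ contained in some $C\in\mathcal{C}$, and (ii) the crucial estimate $\sum_{C\in\mathcal{C}}\binom{|C|}{s}\le 2^{o(s)}\binom{\frac12 Ks}{s}$. We extract one further feature: each container $C$ is output together with an arithmetic progression $P_C$ of size at most $\tfrac{1+o(1)}{2}Ks$, arising from the Freiman-type structural input used inside the container algorithm — this is precisely the structure responsible for the smallness in (ii). If $J\subset C$ is $\delta$-scattered then $\delta s<|J\setminus P_C|\le|C\setminus P_C|$, so a $\delta$-scattered set can lie only in a container $C$ with $|C\setminus P_C|>\delta s$. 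The quantitative assertion thus reduces to
\[
\sum_{C\in\mathcal{C}\,:\,|C\setminus P_C|>\delta s}\binom{|C|}{s}\ \le\ 2^{-c(\delta)s}\binom{\tfrac12 Ks}{s}.
\]

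The heart of the matter — and the step I expect to be the main obstacle — is this last inequality: a container that is genuinely spread out beyond a single progression of size $\tfrac12 Ks$ must be penalised, either in number or in the value of $\binom{|C|}{s}$. I would extract the penalty from a \emph{stability version} of the sumset estimate driving the original proof, roughly: if $|A+A|\le K|A|$ and $A$ has more than $\delta|A|$ elements outside every progression of size $(1+\delta)\tfrac{K|A|}{2}$, then the doubling of $A$ is bounded away — by a factor $1+\eta$ with $\eta=\eta(\delta)>0$ — from the minimum compatible with the amount of ``structured'' mass of $A$, so that the $+A$ operation loses a constant fraction of its efficiency. Feeding this $1+\eta$ factor through the iteration of the asymmetric container lemma, and verifying that a constant relative saving at each relevant application compounds to a genuine $2^{-\Omega_\delta(s)}$ saving rather than being swallowed by the $2^{o(s)}$ error terms already present, is the fiddly part. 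An alternative, possibly cleaner route is to push the container iteration further so that every output container satisfies $|C\setminus P_C|=o(s)$ outright: then there are no scattered containers at all and the displayed estimate is trivial, the cost being that one must re-check that this refinement keeps $|\mathcal{C}|\cdot\max_{C}\binom{|C|}{s}$ within $2^{o(s)}\binom{\frac12 Ks}{s}$ — once more a matter of re-examining the parameters in the proof of Theorem~\ref{thm:main}.
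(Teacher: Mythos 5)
There is a genuine gap, and it is located exactly where you flagged it yourself. Your reduction of the theorem to the quantitative assertion about $\delta$-scattered sets is fine, and the comparison to the lower bound $\binom{Ks/2}{s}$ is the same as the paper's. But the mechanism you propose for the key inequality does not exist in the original proof, and the claim you make about it is false: the containers produced in the proof of Theorem~\ref{thm:main} are \emph{not} ``output together with an arithmetic progression $P_C$''. That proof uses only the asymmetric container lemma (Theorem~\ref{thm:container}) plus a Pollard/Hamidoune--Serra supersaturation bound (Corollary~\ref{supersat}), and the latter gives only a cardinality bound $|B|\le(1+2\epsilon)\tfrac{Ks}{2}$, with no Freiman-type structure anywhere. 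Both of the routes you then sketch — weaving a $1+\eta$ penalty through the container iteration, or pushing the iteration further until all containers are near-progressions — would require re-engineering the container argument, and neither is how the paper closes the gap.

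What the paper actually does is a clean post-hoc step. Its Theorem~\ref{thm:mcontainer} gives a family $\mathcal A$ such that every container $(A,B)$ with $|B|>m/\log n$ has at most $\epsilon^2|B|^2$ pairs $(b_1,b_2)\in B^2$ with $b_1+b_2\notin A$, while Corollary~\ref{supersat} forces $|B|\le(1+2\epsilon)\tfrac{Ks}{2}$. The missing ingredient is \emph{Mazur's stability theorem} (Theorem~\ref{thm:mazur}), packaged as Corollary~\ref{satstability}: if $(1-\epsilon)\tfrac{Ks}{2}\le|B|\le(1+2\epsilon)\tfrac{Ks}{2}$ and $B$ has few bad pairs, then $B$ itself is $(\epsilon,Ks)$-close to an arithmetic progression. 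Thus one gets, for free, the dichotomy that each container $B$ either satisfies $|B|\le(1-\epsilon)\tfrac{Ks}{2}$ or is close to a progression; the progressions $P_C$ you postulated are manufactured at this stage, not inside the container algorithm. The rest of the paper's proof is the binomial bookkeeping you anticipate: the ``small'' containers lose an $\exp(-\Omega(\epsilon s))$ factor against $\binom{Ks/2}{s}$ after choosing $\epsilon$ appropriately, and for structured containers, a $J$ that avoids all near-progressions must put $\ge 2^9\epsilon s$ points into the exceptional set $T$ of size $\le 2^5\epsilon Ks$, which is exponentially unlikely. So the argument is a stability statement applied to the containers themselves, not a modification of the iteration; without Mazur's theorem (or some substitute stability version of Pollard's inequality) the key step of your proposal remains unproved.
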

In the case $s=\Omega (n)$ (and hence $K=O(1)$), this result was proved by Mazur \cite{mazur2015structure}. We will provide better bounds for the error terms in Theorem~\ref{thm:stability}, below.
\subsection{Abelian Groups}
Notice that the doubling constant is defined for finite subsets of any abelian group. So, given a finite subset $Y$ of an abelian group, one might ask: how many subsets of $Y$ of size $s$ with doubling constant $K$ there are? We are also able to provide an answer to this more general question. From now on, fix an arbitrary abelian group $G$ throughout the paper. To state our main result formally in the context of general abelian groups we define, for each positive real number $t$, the quantity $\beta(t)$ to be the size of the biggest subgroup of $G$ of size at most $t$, that is,
\begin{equation} \label{eq:beta}
\beta(t)=\max \big\{ |H|: H\leqslant G,~|H|\leq t \big\}.
\end{equation}
\begin{thm}\label{thm:mainG}
Let $s,n$ be integers, $2\leq K\leq o(\frac{s}{(\log n)^3})$, and $Y\subset G$ with $|Y|=n$. The number of sets $J\subset Y$ with $|J|=s$ such that $|J+J|\leq K|J|$ is at most 
 \[ 2^{o(s)}\binom{\frac{1}{2}(Ks+\beta)}{s},\]
 where $\beta:= \beta((1+o(1))Ks)$.
\end{thm}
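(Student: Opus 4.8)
Since Theorem~\ref{thm:mainG} reduces, for $G=\mathbb Z$ and $Y=[n]$, to Theorem~\ref{thm:mainpr} (there $\beta=1$, the only finite subgroup of $\mathbb Z$ being trivial), the plan is to run the same container argument used for Theorem~\ref{thm:main} and to isolate the two points at which the subgroup structure of $G$ enters. First I would set up the auxiliary hypergraph on which the asymmetric container lemma of Morris--Samotij--Saxton acts, recording — as in the integer case — the relation between a set and its sumset, so that every $J\subseteq Y$ with $|J+J|\le Ks$ becomes (essentially) an independent set and the condition $|J+J|\le Ks$ confines the sumset part of that set to size $(1+o(1))Ks$. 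Applying the container lemma, iterated as needed, then produces a family $\mathcal C$ of at most $2^{o(s)}$ containers $C\subseteq Y$ such that every admissible $J$ lies in some $C\in\mathcal C$ and every $C$ carrying an admissible $J$ is additively structured: it has additive dimension bounded in terms of $K$ and a sumset $C+C$ of size at most $(1+o(1))Ks$. Checking the codegree conditions of the container lemma with $K$ allowed to grow, uniformly over the unknown group $G$, is the step that forces the hypothesis $K=o(s/(\log n)^3)$; I expect this to be the most delicate and technical part of the argument, though it should be formally identical to the integer case, since it involves only cardinalities of sumsets and not the structure of $G$.

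Next I would analyse a single relevant container $C$, which is where the group case genuinely differs. Over $\mathbb Z$ one applies Freiman's theorem \cite{Freiman59} to place $C$ inside a generalized arithmetic progression of dimension bounded in terms of $K$, and then refines this — using that it is $|J+J|\le Ks$, not $|C+C|$, that is controlled — to conclude that a relevant container lies inside an arithmetic progression of size at most $\tfrac12(1+o(1))Ks$, higher-dimensional progressions being necessarily smaller and thus carrying fewer $s$-sets. The group analogue is the Green--Ruzsa theorem \cite{GreenRuzsa2007Freiman}: a structured $C$ lies inside a coset progression $H+P$, with $H\leqslant G$ a subgroup and $P$ a proper generalized arithmetic progression of dimension bounded in terms of $K$. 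Since the size blow-up in Green--Ruzsa is exponential in $K$, it cannot be used as a black box for the main term; instead, arguing as over $\mathbb Z$, one shows that a relevant container forces $|H+2P|\le(1+o(1))Ks$. From $|H|\le|H+2P|\le(1+o(1))Ks$ and the definition of $\beta$ one immediately gets $|H|\le\beta\big((1+o(1))Ks\big)=\beta$; and, exactly as in the integer case, $|H+P|\le\tfrac12(1+o(1))(Ks+\beta)$ — when $\dim P=1$ this reads $|H+P|=\tfrac12\big(|H+2P|+|H|\big)\le\tfrac12\big((1+o(1))Ks+\beta\big)$; when $\dim P\ge 2$ the sumset grows strictly faster than $|H+P|$, forcing $|H+P|$ smaller still; and when $P$ is trivial, $C$ lies in a single coset of $H$, so $|C|\le|H|\le\beta\le\tfrac12(1+o(1))(Ks+\beta)$, the last inequality using once more that $\beta\le(1+o(1))Ks$.

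Finally I would assemble the count. The number of admissible sets $J$ is at most $|\mathcal C|$ times $\max_C\binom{|C|}{s}$, the maximum taken over containers that carry an admissible set; by the previous step every such $C$ is contained in a coset progression of size at most $\tfrac12(1+o(1))(Ks+\beta)$ (or in a single coset of $H$, of size $\le\beta$), so $|C|\le\tfrac12(1+o(1))(Ks+\beta)$ in every case. Hence the number of admissible $J$ is at most
$$2^{o(s)}\binom{\tfrac12(1+o(1))(Ks+\beta)}{s}=2^{o(s)}\binom{\tfrac12(Ks+\beta)}{s},$$
the equality holding because $\tfrac12(Ks+\beta)\ge\tfrac12 Ks\ge s$ (as $K\ge 2$), so inflating the top of the binomial coefficient by a factor $1+o(1)$ costs only $2^{o(s)}$. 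The main obstacle is, as flagged, the first step: coaxing the asymmetric container lemma into delivering, for $K$ as large as $o(s/(\log n)^3)$, containers with a sumset of size only $(1+o(1))Ks$ — rather than the $2^{O(K)}s$ that a black-box Freiman or Green--Ruzsa bound would produce — since this is precisely what stops higher-dimensional or oversized containers from inflating the count beyond $2^{o(s)}\binom{\tfrac12(Ks+\beta)}{s}$.
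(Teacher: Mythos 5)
Your high-level skeleton (run the asymmetric container iteration, then bound $\binom{|B|}{s}$ for each final container $B$ and multiply by the number of containers) matches the paper. But the per-container analysis you propose is not what the paper does, and I believe it has a real gap.

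You claim that a ``relevant container $C$ \ldots has \ldots a sumset $C+C$ of size at most $(1+o(1))Ks$,'' and you then invoke Green--Ruzsa to place $C$ in a coset progression $H+P$. The container lemma does not give you small doubling of $C$. What it gives (Theorem~\ref{thm:mcontainer}\ref{item:fewedges}) is a pair $(A,B)$ with $|A|\le Ks$ and at most $\epsilon^2|B|^2$ pairs $(b_1,b_2)\in B^2$ with $b_1+b_2\notin A$; i.e.\ \emph{most} pairs from $B$ land in a fixed set $A$ of size $\le Ks$. That is a weaker, asymmetric, statistical condition: $B$ may contain a sparse ``error'' part that inflates $|B+B|$ far beyond $(1+o(1))Ks$ while contributing only $o(|B|^2)$ pairs, so Green--Ruzsa is not applicable to $B$ as a black box and there is no obvious way to extract a genuinely small-doubling set on which to apply it without losing more than $2^{o(s)}$ in the count. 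Moreover, even over $\mathbb{Z}$ the paper does \emph{not} go through Freiman's theorem for the counting result: the premise ``over $\mathbb{Z}$ one applies Freiman's theorem'' is mistaken. (Freiman-type structure enters the paper only in the separate typical-structure result, via Mazur's Theorem~\ref{thm:mazur}, which is again a ``popular sums'' statement rather than an inverse theorem about $|B+B|$.)

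What the paper actually does at this stage is the opposite of an inverse theorem: it uses a direct \emph{supersaturation} bound. Theorem~\ref{thm:hamserra} is a Kneser/Pollard-type inequality
$\sum_x\min(1_U*1_V(x),t)\ge t(|U|+|V|-t-\alpha(U,V))$,
where the correction term $\alpha(U,V)$ measures exactly the subgroup obstruction; Corollary~\ref{supersat} then says that if $|B|>(\tfrac12+\epsilon)(|A|+\beta)$ with $\beta=\beta((1+4\epsilon)|A|)$, then at least $\epsilon^2|B|^2$ pairs of $B$ sum outside $A$. Contrapositively, every final container $B$ with $|B|>m/\log n$ satisfies $|B|\le(\tfrac12+\epsilon)(Ks+\beta)$, and this is where $\beta$ enters — not through a coset-progression decomposition. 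This also corrects your remark that the codegree/supersaturation step ``involves only cardinalities of sumsets and not the structure of $G$'': the group structure of $G$ enters the argument precisely through $\alpha(U,V)$ and $\beta$, and this is the \emph{only} place it enters. Your sketch would need to be replaced by (or reduced to) such a popular-sums inequality; as written, the Green--Ruzsa step does not go through.
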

Again we will actually prove somewhat stronger (although slightly more convoluted) bounds for Theorem \ref{thm:mainG}, see Theorem \ref{thm:main}. We remark that Theorem~\ref{thm:mainG} implies Theorem~\ref{thm:mainpr}, since the only finite subgroup of $\mathbb{Z}$ is the trivial one, so in this case $\beta (t)=1$ for all $t$. Finally let us remark that Theorem~\ref{thm:mainG} is best possible in many cases.
Indeed suppose for some integers $l,m$, that the largest subgroup $H\leqslant G$ with $|H|\leq m\leq |G|$ is of size $\beta=\frac{m}{2l-1}$, then there are at least 
\[\binom{\frac{m+\beta}{2}}{s}\]
sets $J\subset G$ of size $s$ such that $|J+J|\leq m$. To see this, take an arithmetic progression $P\subset G/H$ of size $l$ (there exists one because of the choice of $H$) and consider $B=P+H$. Since $|B+B|\leq |P+P||H|=m$, for every set $J\subset B$ of size $s$ we have $|J+J|\leq |B+B|\leq m$. Therefore, there are at least \[\binom{\frac{lm}{2l-1}}{s}=\binom{\frac{m+\beta}{2}}{s}\] sets $J\subset B$ of size $s$ with $|J+J|\leq m$. 
\subsection{The method of hypergraph containers}
Before diving into the proof of the main results, let us briefly mention the main tool used in the proof of Theorem \ref{thm:mainpr}. The method of hypergraph containers, introduced by Balogh, Morris and Samotij~\cite{balogh12:_indep_sets_in_hyper} and independently by Saxton and Thomason~\cite{saxton12:_hyper_contain}, has proven to be a very useful tool in counting problems that involve forbidden structures, for a general overview of the method and its applications see \cite{balogh2018method}. More recently, Morris, Samotij and Saxton \cite{morris2018asymmetric} introduced asymmetric containers, a generalization of hypergraph containers for forbidden structures with some sort of asymmetry, and applied the method to give a structural characterization of almost all graphs with a given number of edges free of an induced $C_4$. A variant of the asymmetric container lemma, which follows essentially from a minor modification of the proof in \cite{morris2018asymmetric}, will be our main tool in this article, we give more details in the next section.
\section{The Asymmetric Container Lemma}\label{sec:container lemma}
In this section we will state our main tool and give a brief explanation of how we will apply it to our problem. Let $Y\subset G$, with $|Y|=n$, and observe that when trying to count sets $J\subset Y$ with $|J|=s$ and $|J+J|\leq Ks$, one may instead count sets $J\subset Y$ such that there is a set $I\subset Y$ with $J+J\subset I$ and $|I|\leq Ks$. Keeping this in mind, the following definition will be useful.
\begin{defn}\label{defn:h(a,b)}
Given disjoint copies of $Y+Y$ and $Y$, namely $Y_0,~Y_1$ respectively, and $A\subset Y_0$ and $B\subset Y_1$, we define $\mathcal{H}(A,B)$ to be the hypergraph with vertex set $V(\mathcal{H}(A,B)):=(Y_0\setminus A )\cup B$ and edge set \[E(\mathcal{H}(A,B)):=\big\{(\{c\}, \{a,b\}):c\in Y_0\setminus A,\, a,b\in B,~ a+b=c \big\}.\] 
\end{defn}
Sometimes when $A$ and $B$ are clear from the context we will denote $\mathcal{H}(A,B)$ simply by $\mathcal{H}$. Notice that $\mathcal{H}(A,B)$ is not uniform since there are edges $(\{c\},\{a\})$ corresponding to $a+a=c$, but these will not be a problem. The usefulness of Definition~\ref{defn:h(a,b)} is that now for every pair of sets $(I,J)$ with $J+J\subset I$ we know that $(Y_0\setminus I )\cup J$ doesn't contain any edges of $\mathcal{H}(A,B)$, so $(Y_0\setminus I )\cup J$  would usually be called an independent set, but instead we will call the pair $(I,J)$ independent for convenience. Since we have a method for counting what are usually called independent sets in hypergraphs, and each of those is in correspondence to what we call an independent pair, we can obtain a theorem for counting independent pairs.\par 
To state the main tool in this article we will need to go into some more slightly technical definitions. We first define a useful generalization of uniform hypergraphs, that includes the hypergraph presented in Definition \ref{defn:h(a,b)}. Given disjoint finite sets $V_0,~V_1$ we define an $(r_0,r_1)$-bounded hypergraph $\mathcal{H}$ on the vertex set $V=V_0\cup V_1$ to be a set of edges $E(\mathcal{H})\subset \binom{V_0}{\leq r_0}\times \binom{V_1}{\leq r_1}$. Note that the hypergraph in Definition~\ref{defn:h(a,b)} is $(1,2)$-bounded. Given a pair $(W_0,W_1)\in 2^{V_0}\times 2^{V_1}$, we say $(W_0,W_1)$ \textit{violates} $(e_0,e_1)\in E(\mathcal{H})$ if $e_0\subset V_0\setminus W_0$ and $e_1\subset W_1$. If a set $(W_0,W_1)$ doesn't violate any $(e_0,e_1)\in E(\mathcal{H})$ then we call $(W_0,W_1)$ \textit{independent} with respect to $\mathcal{H}$. Let $\mathcal{F}_{\leq m}(\mathcal{H})\subset 2^{V(\mathcal{H})}$ be the family of independent pairs $(W_0,W_1)$ such that $~|W_0|\leq m$, and observe that for any pair of sets $(I,J)$, with $|I|\leq m$ and $J+J\subset I$, we have $(I,J)\in \mathcal{F}_{\leq m}(\mathcal{H}(\emptyset,Y))$. We define the codegree $d_{(L_0,L_1)}(\mathcal{H})$ of $L_0\subset V_0,~L_1\subset V_1$ to be the size of the set \[\{(e_0,e_1)\in E(\mathcal{H}):~L_0\subset e_0,~L_1\subset e_1\}\] and we define the maximum $(\ell_0,\ell_1)$-codegree of $\mathcal{H}$ to be
\[\Delta_{(\ell_0,\ell_1)}:=\max\{d_{(L_0,L_1)}(\mathcal{H}):~L_0\subset V_0,~L_1\subset V_1,~|L_0|=\ell_0,~|L_1|=\ell_1\}.\]  
With all of this in mind we introduce a variant of the asymmetric container lemma of Morris, Samotij and Saxton \cite{morris2018asymmetric} that we can, once we have suitable supersaturation theorem to check the codegree condition, apply iteratively and prove Theorem \ref{thm:mainpr}.
\begin{thm}
\label{thm:container}
For all non-negative integers $r_0, r_1$, not both zero, and each $R> 0$, the following holds. Suppose that $\mathcal{H}$ is a non-empty $(r_0, r_1)$-bounded hypergraph with $V(\mathcal{H})=V_0\cup V_1$, and $b,~m$, and $q$ are integers with $b\leq \min\{m,|V_1|\}$, satisfying
\begin{equation}\label{deg cond}
\Delta_{(\ell_0,\ell_1)}(\mathcal{H})\leq R \frac{b^{\ell_0+\ell_1-1}}{m^{\ell_0}|V_1|^{\ell_1}}e(\mathcal{H})\left(\frac{m}{q}\right)^{{1}[\ell_0>0]}
\end{equation}
for every pair $(\ell_0,\ell_1)\in \{0,1,\ldots,r_0\}\times\{0,1,\ldots,r_1\}\setminus \{(0,0)\}$. Then there exists a family $\mathcal{S}\subset \binom{V_0}{\leq r_0b}\times\binom{V_1}{\leq r_1b}$ and functions $f\colon \mathcal{S}\to 2^{V_0}\times 2^{V_1}$ and $g\colon \mathcal{F}_{\leq m}(\mathcal{H})\to \mathcal{S}$, such that, letting $\delta=2^{-(r_0+r_1+1)(r_0+r_1)}R^{-1}$:
\begin{enumerate}[label=(\roman*)]
\item If $f(g(I,J))=(A,B)$ with $A\subset V_0$ and $B\subset V_1$, then $A\subset I$ and $J\subset B$.\label{item:container-1}
\item  For every $(A,B)\in f(\mathcal{S})$ either $|A|\geq \delta q$ or $|B|\leq (1-\delta)|V_1|$.\label{item:container-2}
\item If $g(I,J)=(S_0,S_1)$ and $f(g(I,J))=(A,B)$ then $S_0\subset V_0\setminus I$ and $S_1\subset J$, and $|S_0|>0$ only if $|A|\geq \delta q$.\label{item:container-3}
\end{enumerate}
\end{thm}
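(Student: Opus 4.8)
The plan is to adapt the proof of the asymmetric container lemma of \cite{morris2018asymmetric} almost line by line, re-checking that each step survives the present hypotheses; the only genuine differences are that here $\mathcal{H}$ is merely $(r_0,r_1)$-bounded rather than uniform on each side, and that the codegree hypothesis \eqref{deg cond} carries the extra factor $(m/q)^{\indicator[\ell_0>0]}$ with a free parameter $q$. The starting observation is that a pair $(I,J)\in\mathcal{F}_{\le m}(\mathcal{H})$ is independent precisely when the pair $(V_0\setminus I,\,J)$ contains no edge of $\mathcal{H}$, i.e.\ there is no $(e_0,e_1)\in E(\mathcal{H})$ with $e_0\subseteq V_0\setminus I$ and $e_1\subseteq J$; writing $U_0:=V_0\setminus I$, the sets $U_0$ and $J$ now play symmetric roles, the only asymmetry being that $|U_0|\ge|V_0|-m$ is large. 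I would therefore attach to each such pair a small \emph{fingerprint} $(S_0,S_1)$ with $S_0\subseteq U_0$ and $S_1\subseteq J$, together with a \emph{container} $(C_0,C_1)$ satisfying $U_0\subseteq C_0\subseteq V_0$ and $J\subseteq C_1\subseteq V_1$, and then set $g(I,J):=(S_0,S_1)$, $A:=V_0\setminus C_0$, $B:=C_1$, and $f(S_0,S_1):=(A,B)$. Then $A\subseteq V_0\setminus U_0=I$ and $J\subseteq B$, which is conclusion~\ref{item:container-1}, while $S_0\subseteq U_0=V_0\setminus I$ and $S_1\subseteq J$ give the two inclusions of conclusion~\ref{item:container-3}.

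The algorithm producing $(S_0,S_1)$ and $(C_0,C_1)$ is the one of \cite{morris2018asymmetric}. It sweeps through the codegree levels $(\ell_0,\ell_1)\in\{0,\dots,r_0\}\times\{0,\dots,r_1\}\setminus\{(0,0)\}$ in a fixed order, maintaining a current container $(C_0,C_1)$, initialised to $(V_0,V_1)$, and a current fingerprint, initially empty. Within a level one repeatedly inspects the sub-hypergraph of $\mathcal{H}$ induced on $(C_0,C_1)$ after fixing the fingerprint vertices chosen so far, picks a vertex $v$ of maximum codegree in the corresponding link on the relevant side, and asks whether $v$ lies on the independent-set side ($U_0$ if $v\in V_0$, $J$ if $v\in V_1$): if it does, $v$ is placed in the fingerprint and a suitable set of vertices — those that, together with $v$ and the fixed fingerprint vertices, would complete an edge still contained in the current container — is removed from the container; if it does not, only $v$ itself is removed, which is consistent since then $v\in I$ on side~$0$ or $v\notin J$ on side~$1$. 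One runs such micro-steps until the maximum relevant codegree drops below the threshold prescribed for the next level. Inequality \eqref{deg cond} is exactly the input to the standard codegree-counting argument of \cite{morris2018asymmetric} (a hypergraph with many edges must contain a set of large codegree), which, iterated, bounds the total number of fingerprint vertices by $r_0b$ on side~$0$ and $r_1b$ on side~$1$, so one may take $\mathcal{S}\subseteq\binom{V_0}{\le r_0b}\times\binom{V_1}{\le r_1b}$. Since the choice of $v$, the link used, and the removal rule depend only on the current container and fingerprint, and not directly on $(I,J)$, replaying the algorithm starting from any fingerprint in $\mathcal{S}$ reproduces the same sequence of removals; hence $f$ is well defined on all of $\mathcal{S}$ and $f(g(I,J))$ is the container $(A,B)$ produced by the run on $(I,J)$.

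It remains to extract the dichotomy~\ref{item:container-2} together with the final clause of~\ref{item:container-3}, namely that $|S_0|>0$ only if $|A|\ge\delta q$, and this is where $q$ enters. Following \cite{morris2018asymmetric}, if at termination $|B|=|C_1|>(1-\delta)|V_1|$ then only few vertices were ever removed on the $V_1$ side, so \eqref{deg cond} forces essentially all the container shrinking to have happened on the $V_0$ side; the factor $(m/q)^{\indicator[\ell_0>0]}$ is precisely the device that converts this into the quantitative bound $|A|=|V_0\setminus C_0|\ge\delta q$, and the same estimate shows that a single removal triggered on the $V_0$ side already discards at least $\delta q$ vertices, which gives the last clause of~\ref{item:container-3}. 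Tracking the repeated halvings across the levels produces the stated value $\delta=2^{-(r_0+r_1+1)(r_0+r_1)}R^{-1}$, exactly as in \cite{morris2018asymmetric}. The main obstacle is bookkeeping rather than conceptual: one has to re-run the codegree computations of \cite{morris2018asymmetric} for an $(r_0,r_1)$-\emph{bounded} hypergraph, partitioning $E(\mathcal{H})$ according to $(|e_0|,|e_1|)$ and checking that summing the contributions of the finitely many edge types costs at most the constant already absorbed into $\delta$, and one has to verify that the parameter $q$ threads through the level-by-level induction without disturbing the telescoping of the codegree thresholds.
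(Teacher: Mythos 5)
Your proposal is correct and follows essentially the same route as the paper's appendix proof: adapt the Morris--Samotij--Saxton asymmetric container argument almost verbatim, replacing strict uniformity with $(r_0,r_1)$-boundedness and threading the extra factor $(m/q)^{\indicator[\ell_0>0]}$ through the codegree estimates so that terminations on the $V_0$-side yield $|A|\ge\delta q$ rather than $|A|\ge\delta m$. The paper carries this out by ordering the uniformity reductions along $\UU=\{(1,0),\dots,(r_0,0),(r_0,1),\dots,(r_0,r_1)\}$ (first the $V_1$-side, then the $V_0$-side), so that any vertex added to $S_0$ already places the run in the $c=0$ regime and forces the terminal $|W|\ge\delta q$; this matches the mechanism you describe, modulo minor phrasing about ``codegree levels'' versus uniformity levels.
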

The proof of this variant of the asymmetric container lemma is virtually identical to that in~\cite{morris2018asymmetric}, but, for the sake of completeness, it is provided in the appendix. Let us remark that the main difference between this statement of the asymmetric container lemma and the one in \cite{morris2018asymmetric} is that we partition the vertex set in two parts and treat them differently, which is essential in our application. More specifically, we will apply the container lemma iteratively in such a way that $V_1$ will shrink much more than $V_0$, and to account for this imbalance we must differentiate between the two sets of the partition. Another small difference is that the hypergraph $\mathcal{H}$ doesn't need to be uniform. Finally we observe that if $S_0$ is non-empty, where $g(I,J)=(S_0,S_1)$, then we must have $|A|\geq \delta q$, where $f(g(I,J))=(A,B)$. 
\section{The Supersaturation Results}
We would like to remind the reader that $G$ will always be a fixed abelian group throughout the paper. To apply Theorem \ref{thm:container} to our setting we will need, for sets $A,B\subset G$, bounds on the number of pairs $(b_1,b_2)\in B\times B$ such that $b_1+b_2\not \in A$. In the case $G=\mathbb{Z}$, one such result is Pollard's Theorem \cite{pollard1974generalisation}, which tell us that if $|B|\geq (1/2+\epsilon)|A|$ and $\epsilon<1/2$ then at least an $\epsilon^2$ proportion of all pairs $(b_1,b_2)\in B\times B$ are such that $b_1+b_2\not \in A$. To prove similar results for arbitrary abelian groups one has to have some control on the structure of the group. With this in mind, we define the following quantity.
\begin{defn}\label{alpha}
Given finite sets $U,V\subset G$, we define \[\alpha(U,V)=\max \big\{|V'|: V'\subset G,~ |V'| \leq  |V|,~|\langle V' \rangle |\leq |U|+|V|-|V'| \big\}.\]
\end{defn}
Given $U,V\subset G$ and $x\in G$ we will use the notation $1_U*1_V(x)$ to denote the number of pairs $(u,v)\in U\times V$ such that $u+v=x$.
The following theorem is the generalization we want of Pollard's theorem for arbitrary abelian groups. It is a simple variant of a result of Hamidoune and Serra \cite{hamidoune2008note}, but for completeness we provide a proof in the appendix.
\begin{thm}\label{thm:hamserra}
Let $t$ be a positive integer and $U,V\subset G$ with
 $t\leq |V|\leq |U|<\infty.$ Then
\begin{equation}
\sum _{x\in G} \min(1_U*1_V(x), t)\ge  t\big(|U|+|V|-t-\alpha\big),\end{equation}
where $\alpha:=\alpha(U,V)$
\end{thm}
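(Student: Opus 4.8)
Write $N(x):=1_U\!*\!1_V(x)$, so that the left-hand side is the Pollard-type sum $\sum_{x\in G}\min(N(x),t)$, and the plan is to adapt the Kneser-theoretic argument of Hamidoune and Serra~\cite{hamidoune2008note}. Two tools lie at the heart of the proof. The first is the Davenport--Dyson transform: for $e\in G$, replacing the pair $(U,V)$ by $(U',V'):=(U\cup(V+e),\,V\cap(U-e))$ keeps $|U|+|V|$ unchanged and, via an explicit injection on the representations $u+v=x$ (partition them according to whether the chosen $u$ lies in $V+e$), satisfies $1_{U'}\!*\!1_{V'}(x)\le 1_U\!*\!1_V(x)$ for every $x$; in particular it does not increase $\sum_x\min(N(x),t)$. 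Crucially, it also cannot increase $\alpha(U,V)$: by Definition~\ref{alpha} this quantity depends on the pair only through $|U|+|V|$ and $|V|$ (and on $G$), and the transform leaves the former fixed while not increasing the latter. The second tool is Kneser's theorem, in the form $|U+V|\ge|U|+|V|-|H|$ with $H=\mathrm{Stab}(U+V)$ and $U+V$ a union of $H$-cosets.

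The argument then runs by induction on $|U|+|V|$. In the base case, where $U+V$ is aperiodic — which is automatic for every $U,V$ when $G$ is torsion-free, e.g.\ $G=\mathbb Z$ — one is in the clean Pollard regime: a count on the level sets $\{x:N(x)\ge i\}$ for $1\le i\le t$, together with Kneser's theorem, yields $\sum_x\min(N(x),t)\ge t(|U|+|V|-t)$, which already beats the claimed bound since $\alpha(U,V)\ge0$. If instead $H:=\mathrm{Stab}(U+V)$ is nontrivial, one uses Dyson transforms to arrange, without increasing either $\sum_x\min(N(x),t)$ or $\alpha(U,V)$, that $U$ and $V$ are compatible with the quotient map $\phi\colon G\to G/H$, applies the induction hypothesis to $\phi(U),\phi(V)$ in $G/H$, and multiplies back up, tracking the loss caused by the truncation within each $H$-coset. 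The point is to bound this loss by $t\,\alpha(U,V)$, the witness being a set $V'$ of the appropriate size inside a coset of $H$ (or of the relevant subgroup produced by the induction), which is admissible in Definition~\ref{alpha} precisely because $|H|$ is small relative to $|U|+|V|$.

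The step I expect to be the genuine obstacle is exactly this last bookkeeping: showing that every periodic obstruction encountered in the induction is witnessed by an \emph{admissible} $V'$ in Definition~\ref{alpha}, so that the accumulated loss never exceeds $t\,\alpha(U,V)$. A related technical nuisance is that one must not blindly transform $|V|$ below $t$, where the only available estimate $\sum_x\min(N(x),t)=|U||V|$ is too weak; this is circumvented, as in the proof of Kneser's theorem, by stopping at a critical pair and invoking the induction hypothesis rather than reducing all the way down. Once the inductive statement is set up with the $\alpha$-term present from the outset, the remaining estimates are routine; making the quotient reduction compatible with Definition~\ref{alpha} is essentially the only place where the argument departs from~\cite{hamidoune2008note}.
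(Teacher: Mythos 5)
Your plan correctly identifies the Davenport--Dyson transform as the right tool, and the observation that the transform cannot increase $\alpha(U,V)$ is accurate and well justified: by Definition~\ref{alpha} this quantity depends only on $|U|+|V|$ and $|V|$, the transform fixes the former and does not increase the latter. But, as you acknowledge, the proposal has a genuine unresolved gap precisely where it departs from a direct argument. First, the ``aperiodic base case'' is not actually a base case of your induction and is not justified: Kneser's theorem controls $|U+V|=|L_1|$, but the level-set argument needs lower bounds on all the $|L_i|$ for $1\le i\le t$, and establishing those in the aperiodic setting is exactly the content of Hamidoune--Serra, not something you can quote for free. Second, and this is what you flag as ``the genuine obstacle,'' the step of passing to the quotient $G/H$, invoking induction there, and then arguing that the accumulated loss is covered by $t\,\alpha(U,V)$ is not carried out, and it is not obvious how to relate the $\alpha$ quantity in $G/H$ back to $\alpha(U,V)$ in $G$.

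The paper sidesteps these difficulties entirely. Its proof inducts on $|V|$ rather than on $|U|+|V|$, never invokes Kneser's theorem, and never passes to a quotient. After translating $V$ so that $0\in V$, it splits into two cases. If $V+U\subset U$, then $U$ is a union of cosets of $\langle V\rangle$, which simultaneously gives $1_U*1_V(x)\ge |V|\ge t$ on all of $U$ and $\alpha(U,V)\ge |V|$ (take $V'=V$ in Definition~\ref{alpha}), so the inequality is immediate. Otherwise one applies a single Dyson transform to produce $(C,D)$ with $|C|+|D|=|U|+|V|$, $|D|<|V|$, and $1_C*1_D\le 1_U*1_V$ pointwise (after translation), and applies the induction hypothesis to $(C,D)$; the subtlety you worried about when $|V|$ drops below $t$ is handled with the additional splitting $1_U*1_V=1_{A_1}*1_{B_1}+1_C*1_D$ and the subadditivity $\min(a+b,t)\ge\min(a,t-|D|)+\min(b,|D|)$ together with a second application of induction to $(A_1,B_1)$. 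In this way the $\alpha$-term is produced in exactly one place, entirely inside $G$, and the quotient bookkeeping you identified as the main obstacle never arises.
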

This implies the following corollary.
\begin{cor}\label{supersat}
Let $A,B\subset G$ be finite and non-empty sets, let $0<\epsilon<\frac{1}{2}$ and set $\beta:=\beta((1+4\epsilon)|A|)$. If $|B|\geq (\frac{1}{2}+\epsilon)(|A|+\beta)$ then there are at least $\epsilon ^2 |B|^2$ pairs $(b_1,b_2)\in B^2$ such that $b_1+b_2\not \in A.$
\end{cor}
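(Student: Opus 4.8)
I plan to bound from above the number of pairs $(b_1,b_2)\in B^2$ with $b_1+b_2\in A$, which is exactly $\sum_{x\in A} 1_B*1_B(x)$; since there are $|B|^2$ pairs in total it suffices to show this is at most $(1-\epsilon^2)|B|^2$. First I dispose of the easy case $|B|>2|A|$. Here $1_B*1_B(x)\le |B|$ for every $x\in G$ (the first coordinate of a representation of $x$ determines the second), so $\sum_{x\in A}1_B*1_B(x)\le |A|\,|B|<\tfrac12|B|^2\le(1-\epsilon^2)|B|^2$, using $\epsilon<\tfrac12$; hence the conclusion holds and from now on I may assume $|B|\le 2|A|$.

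The heart of the argument is the inequality
\[
\alpha(B,B)+|A|\ \le\ (2-2\epsilon)|B|.
\]
To prove it, let $V'\subset G$ realise the maximum in Definition~\ref{alpha} for the pair $(B,B)$, so $|V'|=\alpha(B,B)$, $|V'|\le|B|$, and, writing $H:=\langle V'\rangle$ and $h:=|H|$, we have $h\le 2|B|-\alpha(B,B)$ and $\alpha(B,B)\le h$ (as $V'\subset H$). Now split according to the size of $h$. If $h\le(1+4\epsilon)|A|$, then $H$ is a subgroup of $G$ of size at most $(1+4\epsilon)|A|$, so $\beta=\beta((1+4\epsilon)|A|)\ge h\ge\alpha(B,B)$; combined with the hypothesis $|B|\ge(\tfrac12+\epsilon)(|A|+\beta)$, i.e. $|A|+\beta\le\frac{2|B|}{1+2\epsilon}$, and with $(1+2\epsilon)(2-2\epsilon)=2+2\epsilon(1-2\epsilon)\ge 2$, this gives $|A|+\alpha(B,B)\le|A|+\beta\le(2-2\epsilon)|B|$. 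If instead $h>(1+4\epsilon)|A|$, then $\alpha(B,B)\le 2|B|-h<2|B|-(1+4\epsilon)|A|$, hence $|A|+\alpha(B,B)<2|B|-4\epsilon|A|\le(2-2\epsilon)|B|$, the last step using $|B|\le 2|A|$. This proves the claim, and it is the step I expect to be the main obstacle: the quantity $\alpha$ is precisely what controls the loss in Theorem~\ref{thm:hamserra}, and one must see that either the relevant subgroup $H=\langle V'\rangle$ is small enough for $\beta$ to absorb it, or it is so large that the constraint $|\langle V'\rangle|\le 2|B|-|V'|$ already forces $\alpha(B,B)$ to be small — it is the reduction to $|B|\le 2|A|$ that makes the second alternative usable.

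Finally I feed this into the supersaturation estimate. Set $t:=\lceil\epsilon|B|\rceil$, so $1\le t\le|B|$, and apply Theorem~\ref{thm:hamserra} with $U=V=B$ to get $\sum_{x\in G}\min(1_B*1_B(x),t)\ge t\big(2|B|-t-\alpha(B,B)\big)$. Since $\min(1_B*1_B(x),t)\le t$ for each $x\in A$, subtracting the contribution of $A$ yields
\[
\#\{(b_1,b_2)\in B^2:\ b_1+b_2\notin A\}\ \ge\ \sum_{x\notin A}\min(1_B*1_B(x),t)\ \ge\ t\big(2|B|-t-\alpha(B,B)-|A|\big)\ \ge\ t\big(2\epsilon|B|-t\big),
\]
using the claim. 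The continuous optimum $t=\epsilon|B|$ of the right-hand side already equals $\epsilon^2|B|^2$, and a routine check shows the rounding to an integer $t$ is harmless — in the case $h\le(1+4\epsilon)|A|$ one in fact has $2|B|-\alpha(B,B)-|A|\ge\frac{4\epsilon|B|}{1+2\epsilon}$, which (together with the hypothesis forcing $|B|$ not too small relative to $1/\epsilon$) leaves ample room — so there are at least $\epsilon^2|B|^2$ pairs $(b_1,b_2)\in B^2$ with $b_1+b_2\notin A$, as desired.
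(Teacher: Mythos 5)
Your proof is correct and follows essentially the same route as the paper: dispose of the easy regime where $|B|$ is large compared to $|A|$, then invoke Theorem~\ref{thm:hamserra} with $U=V=B$ and $t\approx\epsilon|B|$ after bounding $\alpha(B,B)$ by a case split on the size of $\langle V'\rangle$. The only organisational difference is that you package the two cases into the single clean inequality $\alpha(B,B)+|A|\le(2-2\epsilon)|B|$ before applying the supersaturation bound, whereas the paper first records $\alpha(B,B)\le\max\bigl(\beta,\,2|B|-(1+4\epsilon)|A|\bigr)$, substitutes, and then splits; also your easy-case threshold is $|B|>2|A|$ where the paper uses $|B|\ge(1+\epsilon)|A|$, but both work and both land you with $|B|\le 2|A|$ in the remaining range, which is what case two actually needs. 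One small remark: you are more careful than the paper in noting that $t$ must be a positive integer and using $t=\lceil\epsilon|B|\rceil$, but your justification that the rounding is ``harmless'' is not quite watertight for small $|B|$ and $\epsilon$ near $1/2$ --- the slack $C=2|B|-\alpha(B,B)-|A|\ge 4\epsilon|B|/(1+2\epsilon)$ exceeds $2\epsilon|B|$ only by a factor $2/(1+2\epsilon)$, and the hypothesis gives only $|B|\ge 1+2\epsilon$, not $|B|\gtrsim 1/\epsilon$. To close this properly one can instead take $t=\lfloor C/2\rfloor$ or $\lceil C/2\rceil$ (the true optimiser), giving $t(C-t)\ge C^2/4-1/4$, and then check the remaining tiny cases by hand; this wrinkle is present, unremarked, in the paper's own proof (which writes $t=\epsilon|B|$), so it is not a defect specific to your argument.
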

\begin{proof}
Note first that if $|B|\geq (1+\epsilon)|A|$ then the result is trivial, since for each element $a\in A$ there are at most $|B|$ pairs $(b_1,b_2)\in B^2$ with $b_1+b_2=a$, and therefore there are at least $|B|^2-|A||B|\geq \epsilon^2 |B|^2$ pairs in $B$ whose sum is not in $A$. When $|B|\leq (1+\epsilon) |A|$ we will apply Theorem~\ref{thm:hamserra} with $U=V=B$ and $t=\epsilon |B|$. We first observe that 
\[\alpha(B,B)\leq \max\big(\beta, 2|B|-(1+4\epsilon)|A|\big).\]
Indeed, suppose that $B'\subset G$ satisfies $|\langle B' \rangle|\leq 2|B|-|B'|$. If $|\langle B' \rangle|> (1+4\epsilon) |A|$ then $|B'|\leq 2|B|-|\langle B' \rangle|\leq 2|B|-(1+4\epsilon)|A|$. Otherwise, if $|\langle B' \rangle|\leq  (1+4\epsilon) |A|$, then by the definition~\eqref{eq:beta} of $\beta$, we have $|B'|\leq |\langle B' \rangle|\leq \beta$.\par 
Now by Theorem~\ref{thm:hamserra}, we have
\begin{equation*}
\sum _{x\in G} \min(1_B*1_B(x), \epsilon |B|)\geq  \epsilon |B|\Big ((2-\epsilon)|B|-\max\big(\beta, 2|B|-(1+4\epsilon)|A|\big)\Big).
\end{equation*}
By subtracting from both sides the sum over $x\in A$, we obtain  \begin{equation*}\sum _{x\in {G\setminus A}} \min(1_B*1_B(x), \epsilon |B|)\ge  \epsilon |B|\Big((2-\epsilon)|B|-\max\big (\beta, 2|B|-(1+4\epsilon)|A|\big)-|A|\Big).
\end{equation*}
Now, if $2|B|-(1+4\epsilon)|A|\geq \beta$, then, using that $|B|\leq 2|A|$, \[\sum _{x\in {G\setminus A}} 1_B*1_B(x)\geq \epsilon |B|\big( 4\epsilon|A|-\epsilon |B|  \big)\geq \epsilon^2|B|^2\]
as required. Otherwise, if $\beta\geq 2|B|-(1+4\epsilon)|A|$, then \[\sum _{x\in {G\setminus A}} 1_B*1_B(x)\geq \epsilon |B|\big((2-\epsilon)|B|-\beta-|A|\big)\geq \epsilon^2 |B|^2,\] since $|B|\geq (\frac{1}{2}+\epsilon)(|A|+\beta)$ and $0<\epsilon<\frac{1}{2}$, so $(2-\epsilon)-\frac{2}{1+2\epsilon}\geq \epsilon$.
\end{proof}
To prove a stability theorem for almost all sets with a given size and doubling constant we will also need the following result of Mazur \cite{mazur2015structure}.
\begin{thm}\label{thm:mazur}
  Let $l$ and $t$ be positive integers, with $t\leq l/40$, and let $B\subset\mathbb{Z}$ be a set of size $l$. Suppose that
  \begin{equation*}
   \sum_{x\in\mathbb{Z}}\min(1_B*1_B(x),t)\le (2+\delta)lt,
  \end{equation*}
  for some $0<\delta\leq 1/8$. Then there is an arithmetic progression $P$  of length at most $(1+2\delta)l+6t$ containing all but at most $3t$ points of $B$.
 \end{thm}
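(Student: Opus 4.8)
The plan is to deduce the structure of $B$ from the fact that the hypothesis forces $B$ to be an approximate extremizer of Pollard's inequality, in the form given by Theorem~\ref{thm:hamserra}. Applying that theorem with $U=V=B$ and parameter $s$ (any integer $1\le s\le l$), and noting that $\alpha(B,B)\le 1$ because the only finite subgroup of $\mathbb{Z}$ is trivial, we obtain $\sum_{x}\min(1_B*1_B(x),s)\ge s(2l-s-1)$. Write $R_j:=\{x:1_B*1_B(x)\ge j\}$ and $m_j:=|R_j|$, so that $(m_j)_{j\ge1}$ is non-increasing, $\sum_{x}\min(1_B*1_B(x),s)=\sum_{j=1}^{s}m_j$, and the hypothesis reads $\sum_{j=1}^{t}m_j\le(2+\delta)lt$. (For $t=1$ this says $|B+B|\le(2+\delta)l$ and the conclusion is immediate from Freiman's theorem applied to $B$ itself, so assume $t\ge2$.) Taking $s=\lfloor t/2\rfloor$ above gives $\sum_{j=1}^{\lfloor t/2\rfloor}m_j\ge\lfloor t/2\rfloor(2l-\lfloor t/2\rfloor-1)$; that is, the first half of the sequence already consumes almost the whole budget, so subtracting the two estimates and using $t\le l/40$ yields $m_t=|R_t|\le(2+2\delta)l+O(t)$. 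On the other hand $\sum_{x}1_B*1_B(x)=l^2$ while $\sum_{x}\min(1_B*1_B(x),t)\le(2+\delta)lt$, so $\sum_{x\notin R_t}1_B*1_B(x)\le(2+\delta)lt$: all but an $O(t/l)$ fraction of the additive mass of $B$ is carried by the ``popular sumset'' $R:=R_t$, which has size only $(2+2\delta)l+O(t)$.

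The technical heart of the argument, and the step I expect to be the main obstacle, is to upgrade this concentration into a genuinely structured core: a subset $B'\subseteq B$ with $r:=|B\setminus B'|\le3t$ and $|B'+B'|\le(2+2\delta)l+6t-1-r$. The mechanism is to peel off the ``diffuse'' points. For $b\in B$ put $d(b):=|\{b'\in B:b+b'\notin R\}|$, so that $\sum_{b\in B}d(b)=\sum_{x\notin R}1_B*1_B(x)\le(2+\delta)lt$; one then argues that the points $b$ with $d(b)$ above a suitable threshold number at most $3t$, that removing them leaves $B'+B'$ inside a controlled enlargement of $R$, and iterates if necessary (the set $R$ must be recomputed for the shrunken ground set at each stage). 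The intuition behind the bound $3t$ is a cost count: a point of $B$ lying far from the bulk contributes an essentially fresh translate of the bulk to $B+B$, hence about $l$ sums of bounded multiplicity, and each such sum costs roughly $1$ in $\sum_{x}\min(1_B*1_B(x),t)$ relative to the Pollard optimum $t(2l-t-1)$; since the total surplus available is only $(2+\delta)lt-t(2l-t-1)=\delta lt+t^2+t$, dividing by $\approx l$ caps the number of far points at $O(\delta t)\le3t$, and once they are removed the sumset collapses to size $(2+2\delta)l+O(t)$. Making this heuristic precise with the stated constants, and handling the delicate regime in which $|B+B|$ is itself of order $l^2/t$ because many outliers are spread thinly across $\mathbb{Z}$ and can interact, is exactly where the hypotheses $t\le l/40$ and $0<\delta\le1/8$ are spent; this is the content of Mazur's argument in \cite{mazur2015structure}.

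Once such a core $B'$ is in hand, I would conclude via Freiman's $3k-4$ theorem. Since $|B'|=l-r\ge l-3t\ge3$ and, by the bound from the previous step, $|B'+B'|\le(2+2\delta)l+6t-1-r\le3(l-r)-4=3|B'|-4$ (the last inequality holds because $t\le l/40$ and $\delta\le1/8$ keep $2\delta l+6t+2r$ comfortably below $l$), the $3k-4$ theorem \cite{Freiman59} applies to $B'$ and produces an arithmetic progression $P$ of length $|B'+B'|-|B'|+1\le\bigl[(2+2\delta)l+6t-1-r\bigr]-(l-r)+1=(1+2\delta)l+6t$ containing $B'$; here the excess of $|B'+B'|$ over $2l$ is itself controlled by $r$, so the two error contributions cancel rather than add. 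Taking $T:=B\setminus B'$, which satisfies $|T|=r\le3t$, then gives the theorem.
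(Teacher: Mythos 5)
The paper does not actually prove Theorem~\ref{thm:mazur}: it is quoted verbatim as ``the following result of Mazur \cite{mazur2015structure}'' and used as a black box, so there is no in-paper argument to compare against. Against that backdrop, your proposal has the right shape (Pollard/Theorem~\ref{thm:hamserra} lower bound to detect near-extremality, popularity sets $R_j$, a core $B'$ whose sumset is small, and Freiman's $3k-4$ theorem to finish), and your closing $3k-4$ bookkeeping is correct: if one really has $|B\setminus B'|\le 3t$ and $|B'+B'|\le(2+2\delta)l+6t-1-|B\setminus B'|$, then $|B'+B'|\le3|B'|-4$ follows from $t\le l/40$ and $\delta\le1/8$, and the resulting arithmetic progression has length exactly $(1+2\delta)l+6t$.

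However, there is a genuine gap, and you flag it yourself. The entire content of the theorem is the existence of the core $B'$ with $|B\setminus B'|\le 3t$ \emph{and} the precise sumset bound $|B'+B'|\le(2+2\delta)l+6t-1-|B\setminus B'|$; everything before it (the bound $|R_t|\le(2+2\delta)l+O(t)$, the mass estimate $\sum_{x\notin R_t}1_B*1_B(x)\le(2+\delta)lt$) and after it (the $3k-4$ step) is routine. For that step you offer only a heuristic: thresholding on $d(b)=|\{b'\in B:b+b'\notin R\}|$ and ``iterating if necessary.'' As sketched this does not reach the conclusion. Naive thresholding of $\sum_{b}d(b)\le(2+\delta)lt$ at $d(b)\ge l-a$ bounds the number of removed points by $(2+\delta)lt/(l-a)$, which is about $(2+\delta)t$ for $a$ small but degrades as $a$ grows; and, more seriously, having $d(b)$ small only tells you that $b+B$ meets the popular set $R$ in $\ge a$ places, not that $b+B'\subset R$ or that $|B'+B'|$ is controlled, so there is no argument that the remaining core has a sumset of the claimed size. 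You also acknowledge, correctly, that outliers can cluster and interact (so $|B+B|$ may only be of order $l^2/t$), which breaks the ``each far point costs $\approx l$'' accounting; resolving this is where $t\le l/40$ and $\delta\le1/8$ are actually used, and you defer it entirely to \cite{mazur2015structure}. So what you have is an informed outline pointing at Mazur's proof, not a proof of the theorem; the crucial lemma it rests on is stated but not established.
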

From Theorem \ref{thm:mazur} we can easily deduce the following corollary:
\begin{cor}\label{satstability}
Let $s$ be an integer, $K>0$, and $0<\epsilon<2^{-10}$. If $A,B\subset \mathbb{Z}$, with $(1-\epsilon)\frac{Ks}{2}\leq|B|\leq (1+2\epsilon)\frac{Ks}{2}$ and $|A|\leq Ks$ then one of the following holds:\begin{enumerate}[label=(\alph*)]
\item \label{item:supersat}
There are at least $4\epsilon^2 K^2s^2$ pairs $(b_1,b_2)\in B^2$ such that $b_1+b_2\not\in A$.
\item \label{item:AP}
There is an arithmetic progression $P$ of size at most $\frac{Ks}{2}+32\epsilon Ks$ containing all but at most $8\epsilon Ks$ points of $B$.
\end{enumerate}
\end{cor}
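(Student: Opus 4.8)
The plan is to deduce the corollary directly from Mazur's theorem (Theorem~\ref{thm:mazur}) applied to the set $B$, after rephrasing alternative~\ref{item:supersat} in terms of the convolution $1_B*1_B$. The starting point is the identity: the number of pairs $(b_1,b_2)\in B^2$ with $b_1+b_2\notin A$ equals $\sum_{x\notin A}1_B*1_B(x)$, and this dominates $\sum_{x\notin A}\min(1_B*1_B(x),t)$ for every threshold $t$. So I would fix a positive integer $t$ of order $\epsilon|B|$ --- concretely $t=\lceil 2\epsilon|B|\rceil$, which for $|B|$ not too small obeys the hypothesis $t\le|B|/40$ of Theorem~\ref{thm:mazur} since $2\epsilon<2^{-9}$ --- and then dichotomize: either $\sum_{x\notin A}\min(1_B*1_B(x),t)\ge 4\epsilon^2K^2s^2$, which already gives alternative~\ref{item:supersat}, or the whole sum $\sum_{x}\min(1_B*1_B(x),t)$ is small enough to feed into Mazur's theorem.

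For the second case, assume~\ref{item:supersat} fails, so $\sum_{x\notin A}1_B*1_B(x)<4\epsilon^2K^2s^2$. Splitting the full sum over $A$ and its complement and bounding every term of the first piece by $t$ gives
\[
\sum_{x}\min(1_B*1_B(x),t)\ \le\ |A|\,t+\sum_{x\notin A}\min(1_B*1_B(x),t)\ \le\ Ks\cdot t+4\epsilon^2K^2s^2.
\]
Now I would use the lower bound $|B|\ge(1-\epsilon)\tfrac{Ks}{2}$, i.e. $Ks\le\tfrac{2|B|}{1-\epsilon}$, together with $t\ge 2\epsilon|B|$ (hence $|B|\le t/(2\epsilon)$), to turn both terms into multiples of $|B|t$: the first is at most $\tfrac{2}{1-\epsilon}|B|t$, and the second is $4\epsilon^2(Ks)^2\le\tfrac{16\epsilon^2}{(1-\epsilon)^2}|B|^2\le\tfrac{8\epsilon}{(1-\epsilon)^2}|B|t$. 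Summing, $\sum_{x}\min(1_B*1_B(x),t)\le(2+\delta)|B|t$ with $\delta=\tfrac{10\epsilon-2\epsilon^2}{(1-\epsilon)^2}$, and one checks $0<\delta\le 1/8$ because $\epsilon<2^{-10}$.

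It then remains to invoke Theorem~\ref{thm:mazur} with $l=|B|$ and this $\delta$: it produces an arithmetic progression $P$ of length at most $(1+2\delta)|B|+6t$ containing all but at most $3t$ points of $B$. Translating back with the upper bound $|B|\le(1+2\epsilon)\tfrac{Ks}{2}$ and $t\le 2\epsilon|B|+1$, the length is at most $\tfrac{Ks}{2}+32\epsilon Ks$ and the number of omitted points is at most $8\epsilon Ks$, which is precisely alternative~\ref{item:AP}. I expect no conceptual obstacle here --- every step is a routine estimate --- so the real work is just tracking the constants $32$ and $8$, both of which have generous slack. The one genuinely delicate point is that $t$ must be an integer, so the rounding introduces additive $+1$'s that have to be absorbed; this forces a mild lower bound on $Ks$, which is harmless since the corollary is applied with $Ks\to\infty$ (and, if one insists, the remaining small cases are easy: when~\ref{item:supersat} fails for bounded $Ks$ then $B+B\subset A$, so $B$ has doubling less than $2$ and Freiman's $3k-4$ theorem already puts $B$ in a very short progression).
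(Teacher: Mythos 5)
Your proof is correct and follows essentially the same route as the paper's: threshold the convolution at $t\approx 2\epsilon\cdot\Theta(Ks)$, split the sum over $A$ and its complement, and feed the resulting bound into Mazur's theorem; the only differences are cosmetic (you take $t=\lceil 2\epsilon|B|\rceil$ rather than the paper's $t=2\epsilon Ks$, and you dichotomize directly on whether alternative~\ref{item:supersat} holds rather than on the size of $\sum_x\min(1_B*1_B(x),t)$). You are also right that the additive $+1$'s from rounding $t$ to an integer create a small-$Ks$ edge case; the paper silently glosses over the same point, and in the application $\epsilon Ks$ is always large, so this is harmless.
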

\begin{proof}
Suppose first that 
 \begin{equation}\label{eq:smallpopdub}
   \sum_{x\in\mathbb{Z}}\min(1_B*1_B(x),t)\le (2+8\epsilon)2\epsilon |B|Ks.
  \end{equation}
In this case we apply Theorem~\ref{thm:mazur} with $l:=|B|$, $\delta:=8\epsilon$, and $t=2\epsilon Ks\leq l/40$, and deduce that~\ref{item:AP} holds. Therefore suppose~\eqref{eq:smallpopdub} doesn't hold, in this case
\begin{equation*}
   \sum_{x\in\mathbb{Z}\setminus A}\min(1_B*1_B(x),t)\geq (2+8\epsilon)(1-\epsilon)\epsilon K^2s^2-t|A|,
  \end{equation*}
since $|B|\geq (1-\epsilon)\frac{1}{2}Ks$. Noting that $t|A|\leq 2\epsilon K^2s^2$ it follows that \[\sum_{x\in\mathbb{Z}\setminus A}1_B*1_B(x)\geq \Big((2+8\epsilon)(1-\epsilon) -2 \Big)\epsilon K^2s^2\geq 4\epsilon^2K^2s^2,\] since $\epsilon< 2^{-10}$, so \ref{item:supersat} holds as required.
\end{proof}
\section{The Number of Sets with a given Doubling}
In this section we prove the following statement which implies Theorems~\ref{thm:mainpr} and \ref{thm:mainG}.
\begin{thm}\label{thm:main}
Let $s,n$ be integers, let $2\leq K< 2^{-36}\frac{s}{(\log n)^3}$, and let $Y\subset G$ with $|Y|=n$. The number of sets $J\subset Y$ with $|J|=s$ such that $|J+J|\leq K|J|$ is at most 
 \[ \exp\Big(2^9\lambda K^{1/6}s^{5/6}\sqrt{\log n}\Big)\binom{\frac{1}{2}(Ks+\beta)}{s},\]
 where $\beta:= \beta\big(Ks+2^6K^{7/6}s^{5/6}\sqrt{\log n}\big)$ and $\lambda:=\min\big\{\frac{K}{K-2},\log s\big\}$.
\end{thm}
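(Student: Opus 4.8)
## Proof Plan for Theorem \ref{thm:main}

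\textbf{Overview.} The plan is to apply the asymmetric container lemma (Theorem \ref{thm:container}) iteratively to the hypergraph $\mathcal{H}(A,B)$ of Definition \ref{defn:h(a,b)}, using Corollary \ref{supersat} as the supersaturation input that verifies the codegree condition \eqref{deg cond}. The key structural fact is that any $s$-set $J$ with $|J+J|\leq Ks$ gives rise to an independent pair $(I,J)\in \mathcal{F}_{\leq Ks}(\mathcal{H}(\emptyset,Y))$ where $I\supseteq J+J$ with $|I|\leq Ks$. Each application of the container lemma replaces a pair $(A,B)$ with vertex sets $(Y_0\setminus A, B)$ by a smaller family of ``containers'', recording a small ``fingerprint'' $(S_0,S_1)$ of total size $O(b)$; by conclusion \ref{item:container-2}, after each step either $|A|$ has grown by $\Omega(q)$ or $|B|$ has shrunk by a factor $(1-\delta)$. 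Since $|A|\leq |I|\leq Ks$ throughout, the first alternative can occur only $O(Ks/q)$ times, and since $|B|\geq s$ (as $J\subseteq B$), the second can occur only $O(\log(n/s)/\delta)$ times. So after a bounded number $\ell$ of iterations we reach a family of containers $B$ with $|B|$ close to $s$ — more precisely with $|B|\leq (\tfrac12+\epsilon)(|I|+\beta)$ for a suitable $\epsilon$ — at which point the supersaturation corollary can no longer be applied and we stop.

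\textbf{Choice of parameters and the counting bound.} At each step I would run Theorem \ref{thm:container} with $r_0=1$, $r_1=2$ (matching the $(1,2)$-bounded hypergraph), with $\epsilon$ chosen so that Corollary \ref{supersat} gives $e(\mathcal{H})\gtrsim \epsilon^2|B|^2$ edges, and with $b$ chosen to balance the two competing error contributions: the number of fingerprints at each step is at most $\binom{|V_0|}{\le b}\binom{|V_1|}{\le 2b}$, which is roughly $n^{O(b)}$, while the number of steps is $\ell = O(Ks/q + \log(n/s))$. Optimizing the product $2^{O(\ell b \log n)}$ against the requirement $q \ge \Omega(\epsilon^2 \cdot \text{stuff})$ coming from \eqref{deg cond} (the factor $(m/q)$ must absorb the codegree bounds for $\ell_0=1$) leads to the stated error exponent $2^9\lambda K^{1/6}s^{5/6}\sqrt{\log n}$; the scaling $s^{5/6}K^{1/6}\sqrt{\log n}$ is exactly what comes out of setting $b \asymp (s/K)^{1/6}\cdot(\text{something})$ and $\ell \asymp$ its conjugate. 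The factor $\lambda = \min\{K/(K-2),\log s\}$ appears because when $K$ is bounded away from $2$ the supersaturation is stronger (larger effective $\epsilon$), whereas for $K$ close to $2$ one must take $\epsilon$ as small as $\sim 1/\log s$ and iterate more carefully. The final family of containers has size at most $2^{2^9\lambda K^{1/6}s^{5/6}\sqrt{\log n}}$, and each container $B$ (together with the constraint $|B|\le (\tfrac12+\epsilon)(|I|+\beta)$, $|I|\le Ks+\text{error}$) contains at most $\binom{|B|}{s}\le \binom{\frac12(Ks+\beta')+o(s)}{s}$ sets $J$; absorbing the $o(s)$ into the exponential prefactor gives the claimed bound, with $\beta = \beta(Ks + 2^6 K^{7/6}s^{5/6}\sqrt{\log n})$ reflecting the slack in $|I|$ after accumulating the per-step increments.

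\textbf{Bookkeeping of the iteration.} To make the iteration precise I would define, for a pair $(A,B)$, the ``state'' to be $(|A|, |B|)$ and track how the container lemma transforms it, using conclusions \ref{item:container-1} and \ref{item:container-3} to ensure that the recorded fingerprints determine a valid chain and that $S_0$ is nonempty only when $|A|$ grows (so the ``$A$-growing'' steps and ``$B$-shrinking'' steps can be accounted separately). At a generic step we have current container $(A,B)$ with $J\subseteq B\subseteq Y_1$, $A\subseteq I\subseteq Y_0$, and we set $m := Ks \ge |I|$, $|V_1| := |B|$; we apply Corollary \ref{supersat} with this $A$ and $B$ provided $|B|\ge(\tfrac12+\epsilon)(|A|+\beta((1+4\epsilon)|A|))$ — note $|A|$ rather than $m$, which is why the argument of $\beta$ in the statement is $Ks$ plus the total error rather than $Ks$ alone. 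When the hypothesis of Corollary \ref{supersat} fails we stop and output the current $B$.

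\textbf{Main obstacle.} The hard part will be controlling the interaction between the three parameters $\epsilon$, $b$, and the number of iterations $\ell$ so that the total error exponent is genuinely $o(s)$ in the regime $K = o(s/(\log n)^3)$ — in particular verifying that the codegree condition \eqref{deg cond} holds at \emph{every} step with a uniform constant $R$ (so $\delta$ is bounded below), which requires showing that the $(\ell_0,\ell_1)$-codegrees of $\mathcal{H}(A,B)$ for $(\ell_0,\ell_1)\in\{(1,0),(0,1),(0,2),(1,1),(1,2)\}$ are all dominated by the right-hand side once $e(\mathcal{H})\gtrsim \epsilon^2|B|^2$ and $b$ is chosen appropriately. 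The codegree $d_{(\{c\},\emptyset)}$ counts representations $a+b=c$, which is at most $|B|$; $d_{(\emptyset,\{b\})}$ counts $c$ with $c-b\in B$, again $O(|B|)$; and so on — each is polynomially bounded, but checking the precise inequality \eqref{deg cond} with the factor $(m/q)^{\mathbf{1}[\ell_0>0]}$ and extracting the optimal $b\asymp (s/(K\log n))^{?}$ is the computational heart of the proof. A secondary subtlety is handling the non-uniformity of $\mathcal{H}$ (the diagonal edges $a+a=c$), but as remarked after Definition \ref{defn:h(a,b)} these contribute only $O(|B|)$ edges and affect nothing essential.
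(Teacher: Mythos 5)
Your plan matches the paper's approach in its essentials: iterate the asymmetric container lemma (Theorem~\ref{thm:container}) on the $(1,2)$-bounded hypergraph $\mathcal{H}(A,B)$, use Corollary~\ref{supersat} to verify the codegree hypothesis (via the lower bound on $e(\mathcal{H})$), stop when either $|A|$ exceeds $Ks$ or the supersaturation condition fails, bound the depth of the iteration, and multiply the per-step fingerprint count by the depth. The paper packages this as an intermediate container theorem (Theorem~\ref{thm:mcontainer}) built via a rooted tree with fixed parameters $R = 2/\epsilon^2$, $q = m/\log n$, $b = \sqrt{m/\log n}$, depth $O(\epsilon^{-2}\log n)$, and $n^{O(b)}$ children per node — giving $|\mathcal{A}| \leq \exp(2^{16}\epsilon^{-2}\sqrt{m}(\log n)^{3/2})$ — and then balances this against the binomial overshoot by choosing $\epsilon = 2^4 (K/s)^{1/6}\sqrt{\log n}$; your sketch identifies the same balance and arrives at the same scaling.

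One concrete misattribution worth correcting: you say $\lambda = \min\{K/(K-2),\log s\}$ arises from the strength of the supersaturation and the size of $\epsilon$, but in the paper $\epsilon$ is chosen independently of $K/(K-2)$, and $\lambda$ appears purely from estimating the binomial ratio $\binom{(1+2\epsilon)N}{s}/\binom{N}{s}$ with $N = \tfrac{1}{2}(Ks+\beta)$. When $K$ is close to $2$, $N - s$ is small, so $\binom{(1+2\epsilon)N}{s}/\binom{N}{s} \approx (1 + 2\epsilon N/(N-s))^s \approx e^{2\epsilon s\cdot K/(K-2)}$; when this factor becomes too large the paper instead uses $\binom{(1+2\epsilon)N}{s} \le e^{4\epsilon s \log(1/\epsilon)}\binom{N}{s}$, which gives the $\log s$ cap. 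This is a bookkeeping fact about binomial coefficients, not about the container iteration. Also, your guessed form $b \asymp (s/(K\log n))^{?}$ differs from the paper's $b = \sqrt{Ks/\log n}$, and your depth bound $O(\log(n/s)/\delta)$ is replaced in the paper by the cruder but sufficient $O(\log n/\delta)$ (stopping once $|B| < 1$); neither changes the final exponent.
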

Theorem \ref{thm:main} will follow easily from the following container theorem combined with Corollary \ref{supersat}. We will also use it together with Corollary~\ref{satstability} to prove Theorem~\ref{thm:stability}.
\begin{thm}\label{thm:mcontainer}
Let $m,n$ be integers with $m\geq (\log n)^2$, let $Y\subset G$ with $|Y|=n$, and let $0<\epsilon<\frac{1}{4}$. There is a family $\mathcal{A}\subset 2^{Y+Y}\times 2^Y$ of pairs of sets $(A,B)$, of size
\begin{equation}\label{item:Asmall}
|\mathcal{A}|\leq \exp\Big(2^{16}\frac{1}{\epsilon^2}\sqrt{m}(\log n)^{3/2}\Big)
\end{equation}
such that:
\begin{enumerate}[label=(\roman*)]
\item \label{item:everyind}
For every pair of sets $J\subset Y$, $I\subset Y+Y$, with $J+J\subset I$ and $|I|\leq m$ there is $(A,B)\in \mathcal{A}$ such that $A\subset I$ and $J\subset B$. 
\item \label{item:fewedges}
For every $(A,B)\in \mathcal{A}$, $|A|\leq m$ and either $|B|\leq \frac{m}{\log n}$ or there are at most $\epsilon^2|B|^2$ pairs $(b_1,b_2)\in B\times B$ such that $b_1+b_2\not \in A$.
\end{enumerate}
\end{thm}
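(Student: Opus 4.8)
\textbf{Proof plan for Theorem~\ref{thm:mcontainer}.}
The plan is to build $\mathcal{A}$ by applying the container theorem, Theorem~\ref{thm:container}, repeatedly to hypergraphs of the form $\mathcal{H}(A,B)$ from Definition~\ref{defn:h(a,b)}, starting from $\mathcal{H}(\emptyset,Y)$ and refining the pair $(A,B)$ at each step. First one may assume $m<n\log n$: otherwise $|B|\le n\le m/\log n$ for every $B\subset Y$, and the family $\mathcal{A}=\{(\emptyset,Y)\}$ already satisfies \ref{item:everyind} and \ref{item:fewedges}. I would then fix, once and for all, the parameters $b:=\lceil\sqrt{m/\log n}\,\rceil$, $q:=\lfloor m/\log n\rfloor$ (so $q\ge 1$, using $m\ge(\log n)^2$), and $R:=4\eps^{-2}$, and write $\delta:=2^{-12}R^{-1}=2^{-14}\eps^{2}$ for the constant produced by Theorem~\ref{thm:container} when $(r_0,r_1)=(1,2)$. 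The structural facts about $\mathcal{H}(A,B)$ that I would record are: it is $(1,2)$-bounded; $e(\mathcal{H}(A,B))$ equals, up to a factor of $2$, the number of ordered pairs $(b_1,b_2)\in B\times B$ with $b_1+b_2\notin A$; and, since every edge $(\{c\},\{a,b\})$ is determined by $c$ alone, by $\{a,b\}$ alone, or by $c$ together with one of $a,b$, the codegrees satisfy $\Delta_{(1,0)}(\mathcal{H}),\Delta_{(0,1)}(\mathcal{H})\le|B|$ and $\Delta_{(1,1)}(\mathcal{H}),\Delta_{(0,2)}(\mathcal{H}),\Delta_{(1,2)}(\mathcal{H})\le 1$.

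Given $J\subset Y$ and $I\subset Y+Y$ with $J+J\subset I$ and $|I|\le m$, I would run the following refinement process, maintaining the invariants $A_t\subset I$, $J\subset B_t\subset Y$ and hence $|A_t|\le m$. Start with $(A_0,B_0):=(\emptyset,Y)$. At step $t$: if $|B_t|\le m/\log n$, or if there are at most $\eps^2|B_t|^2$ pairs $(b_1,b_2)\in B_t^2$ with $b_1+b_2\notin A_t$, stop and output $(A_t,B_t)$; otherwise apply Theorem~\ref{thm:container} to $\mathcal{H}(A_t,B_t)$ with $V_1=B_t$. Here $(I\setminus A_t,J)\in\mathcal{F}_{\le m}(\mathcal{H}(A_t,B_t))$, since $A_t\subset I$ and $J+J\subset I$ force every edge $(e_0,e_1)$ with $e_1\subset J$ to satisfy $e_0\subset I$ and so not be violated by $(I\setminus A_t,J)$; and, the process not having stopped, $|B_t|>q$ and $e(\mathcal{H}(A_t,B_t))\ge\tfrac12\eps^2|B_t|^2$. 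A direct calculation combining these with the codegree bounds above, with $R=4\eps^{-2}$, $q<|B_t|$ and $b^2\ge q$, verifies the hypothesis \eqref{deg cond} for every $(\ell_0,\ell_1)\in\{0,1\}\times\{0,1,2\}\setminus\{(0,0)\}$ (the case $(1,2)$ is where $b^2\ge q$ is used, the cases $(1,0)$ and $(1,1)$ where $q<|B_t|$ is used, and the other two are immediate from $e(\mathcal{H})\ge\tfrac12\eps^2|B_t|^2$). Theorem~\ref{thm:container} then supplies a ``fingerprint'' $\sigma_t:=g(I\setminus A_t,J)\in\binom{V_0}{\le b}\times\binom{V_1}{\le 2b}$ and a container $(A',B'):=f(\sigma_t)$ with $A'\subset I\setminus A_t$ and $J\subset B'\subset B_t$; I set $A_{t+1}:=A_t\cup A'$, $B_{t+1}:=B'$ and record $\sigma_t$. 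Note the process never gets stuck: the failure of the stopping rule is exactly what supplies \eqref{deg cond}.

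Next I would bound the number of steps. By \ref{item:container-2}, each step is of one of two kinds: either $|A'|\ge\delta q$, and then (as $A'\subset I\setminus A_t$) $|A_{t+1}|\ge|A_t|+\max\{1,\delta q\}$; or $|B_{t+1}|\le(1-\delta)|B_t|$. Using $q\ge m/(2\log n)$ and $|A_t|\le m$, the first kind occurs at most $O(\eps^{-2}\log n)$ times; using $m\ge(\log n)^2$ (so $|B_t|$ drops from $n$ to at most $m/\log n\le n/\log n$ by factors $\le 1-\delta$, and $\ln\tfrac1{1-\delta}\ge\delta$), the second kind occurs at most $O(\delta^{-1}\log n)=O(\eps^{-2}\log n)$ times; so the process terminates after $T=O(\eps^{-2}\log n)$ steps, and by \ref{item:container-3} the fingerprint $\sigma_t$ has a nonempty first coordinate only at steps of the first kind. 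Let $\mathcal{A}$ be the set of all pairs output over all admissible $(I,J)$. The key observation is that the output is recovered from the sequence $(\sigma_0,\dots,\sigma_{T-1})$ alone, by replaying the process from $(\emptyset,Y)$, because the maps $f,g$ supplied by Theorem~\ref{thm:container} depend only on the hypergraph $\mathcal{H}(A_t,B_t)$ and on the fixed parameters $b,m,q,R$. Since $|Y+Y|\le n^2$ and each $\sigma_t$ is specified by at most $b$ elements of $Y+Y$ and at most $2b$ of $Y$, there are at most $\exp\big(O(Tb\log n)\big)$ such sequences, and $Tb\log n=O\big(\eps^{-2}\sqrt{m\log n}\cdot\log n\big)=O\big(\eps^{-2}\sqrt{m}\,(\log n)^{3/2}\big)$; tracking the constants gives \eqref{item:Asmall}. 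Finally, \ref{item:everyind} holds because \ref{item:container-1} preserves $A\subset I$ and $J\subset B$ along the process, and \ref{item:fewedges} is precisely the stopping rule together with the invariant $|A_t|\le m$.

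The main obstacle is the parameter interplay hidden in the codegree check. The codegree upper bounds on $\mathcal{H}(A_t,B_t)$ are trivial, but to use them in \eqref{deg cond} one needs the matching lower bound $e(\mathcal{H}(A_t,B_t))\gtrsim\eps^2|B_t|^2$, which is available only while the process has not stopped — and this is exactly what forces $R\sim\eps^{-2}$ (hence $\delta\sim\eps^{2}$) and, via the $(1,2)$-codegree condition, $b\gtrsim\sqrt{m/\log n}$. One then has to balance $q$: a larger $q$ shrinks the number of $A$-growing steps to $O(\eps^{-2}m/q)$ but forces $b\gtrsim\sqrt q$, and the sweet spot $q\sim m/\log n$ is the unique one making both kinds of step number $O(\eps^{-2}\log n)$, so that $Tb\log n$ comes out at the right order $\eps^{-2}\sqrt m(\log n)^{3/2}$. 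Getting the absolute constant down to $2^{16}$ is then routine but requires careful bookkeeping of all the constant and logarithmic factors (the rounding in $b,q$, the bound $|Y+Y|\le n^2$, the use of $m\ge(\log n)^2$, and $\ln\tfrac1{1-\delta}\ge\delta$).
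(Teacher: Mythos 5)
Your proposal is correct and follows essentially the same approach as the paper: iteratively applying the asymmetric container lemma to $\mathcal{H}(A_t,B_t)$ with $b\approx\sqrt{m/\log n}$, $q\approx m/\log n$, $R\approx\eps^{-2}$, stopping when the edge count or $|B_t|$ drops, bounding the depth by $O(\eps^{-2}\log n)$ via the dichotomy between growing $A_t$ and shrinking $B_t$, and recovering the output from the sequence of fingerprints (this is precisely the paper's tree $\mathcal{T}$, where the number of leaves is $Z^d$). The one cosmetic difference is your $R=4\eps^{-2}$ versus the paper's tighter $R=2/\eps^2$ (with which the codegree inequalities hold with equality, since $b^2=q$ and $e(\mathcal{H})\ge\eps^2|B|^2/2$); your choice halves $\delta$, doubles the depth bound, and would give $2^{17}$ rather than $2^{16}$ in \eqref{item:Asmall}, so the ``careful bookkeeping'' you mention would require taking $R=2/\eps^2$.
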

\begin{proof}[Proof that Theorem \ref{thm:mcontainer} implies  Theorem \ref{thm:main}]\par 
Let $\mathcal{A}$ be a family given by Theorem~\ref{thm:mcontainer} applied with $m:=Ks$ and $\epsilon>0$ to be chosen later. Then by condition \ref{item:everyind}, for every $s$-set $J$ with doubling constant $K$ there is a pair $(A,B)\in \mathcal{A}$ such that $J\subset B$ and $A\subset J+J$. Define $\mathcal{B}$ to be the family of all sets $B$ that are in some container pair, that is \[\mathcal{B}=\{B\subset Y:\exists A\text{ such that }(A,B)\in \mathcal{A}\}.\]
 Observe that, by Corollary \ref{supersat} and condition~\ref{item:fewedges} on $\mathcal{A}$, for every $B\in \mathcal{B}$ we have $|B|\leq (\frac{1}{2}+\epsilon)(m+\beta)$, where $\beta:=\beta((1+4\epsilon)m)$, since the number of pairs $(b_1,b_2)\in B^2$ such that $b_1+b_2\not\in A$ is at most $\epsilon^2|B|^2$ and $\frac{m}{\log n}\leq (\frac{1}{2}+\epsilon)(m+\beta)$. Therefore the number of sets of size $s$ with doubling constant $K$ is at most 
\begin{equation}\label{eq:Kssets1}
|\mathcal{B}|\max_{B\in \mathcal{B}}\binom{|B|}{s}\leq \exp\Big(2^{16}\frac{1}{\epsilon^2}\sqrt{Ks}(\log n)^{3/2}\Big)\binom{(\frac{1+2\epsilon}{2})(Ks+\beta)}{s}.
\end{equation}
Let $\lambda:=\min\{\frac{K}{K-2},\log s\}$, suppose first that $\frac{K}{K-2}\leq \log s$. By applying the inequality $\binom{cn}{k}\leq (\frac{cn-k}{n-k})^k\binom{n}{k}$ with $k=s$, $c=1+2\epsilon$ and $n=\frac{Ks+\beta}{2}$, it follows that in this case (\ref{eq:Kssets1}) is at most
\[\exp\Big(2^{16}\frac{1}{\epsilon^2}\sqrt{Ks}(\log n)^{3/2}+2\epsilon \lambda s\Big)\binom{\frac{Ks+\beta}{2}}{s}.\] Now choosing $\epsilon:=2^4{\big(\frac{K}{s}\big)}^{1/6}\sqrt{\log n}$, by our restrictions on $K$ we see that \[\epsilon<2^4{\Big(\frac{1}{2^{36}(\log n)^3}\Big)}^{1/6}\sqrt{\log n}=\frac{1}{4}.\] It follows that there are at most $\exp{\big(2^9 \lambda K^{1/6}s^{5/6}\sqrt{\log n}\big)}\binom{\frac{1}{2}(Ks+\beta)}{s}$ sets of size $s$ with doubling constant $K$, when $\frac{K}{K-2}\leq \log s$. 
If $\log s\leq \frac{K}{K-2}$ we use the binomial estimate \[\binom{(\frac{1+2\epsilon}{2})(Ks+\beta)}{s}\leq \exp\Big( 4\epsilon s\log\frac{1}{\epsilon}\Big)\binom{\frac{Ks+\beta}{2}}{s}\] and the result follows by a similar calculation. Since $\beta(m+4\epsilon m)= \beta(Ks+2^6K^{7/6}s^{5/6}\sqrt{\log n})$, this proves the theorem.
\end{proof}
Before we proceed with the proof of Theorem \ref{thm:mcontainer}, let us give a brief overview of how we will deduce it from Theorem \ref{thm:container}. We fix from now on a finite subset $Y\subset G$ with $|Y|=n$, and recall that the $(1,2)$-bounded hypergraph $\mathcal{H}(A,B)$ in Definition \ref{defn:h(a,b)} was defined to have as edges pairs $(\{c\},\{a,b\})$ where $a+b=c$, with $a,b\in B$ and $c\not \in A$. Note that condition \ref{item:fewedges} in Theorem \ref{thm:mcontainer} implies that $\mathcal{H}(A,B)$ has at most $\frac{\epsilon^2}{2}|B|^2$ edges, as long as $|B|> \frac{m}{\log n}$. We remind the reader that a pair of sets $I\subset Y+Y$ and $J\subset Y$ with $J+J\subset I$ correspond to an independent set in $\mathcal{H}(A,B)$ for any $A\subset Y+Y$ and $B\subset Y$, since there are no $c\not \in I$ and $a,b\in J$ such that $a+b=c$. If we additionally assume that $(I,J)\in \mathcal{F}_{\leq m}(\mathcal{H})$, then we know that every $J$ that is in such an independent pair satisfies $|J+J|\leq m$.\par 
Our strategy will be to iteratively apply the container lemma until either there are few edges in the hypergraph $\mathcal{H}(A,B)$, or $|A|>m$, in which case the container doesn't contain any elements of $\mathcal{F}_{\leq m}(\mathcal{H})$. More precisely we will build a rooted tree $\mathcal{T}$ with root $\mathcal{H}(\emptyset,Y)$ whose vertices correspond to hypergraphs $\mathcal{H}(A,B)$ and whose  leaves correspond to a family $\mathcal{A}$ satisfying the conclusion of Theorem \ref{thm:mcontainer}. Given a vertex $\mathcal{H}(A,B)$ of the tree, such that $|A|\leq m$, $|B|> \frac{m}{\log n}$ and 
\begin{equation}\label{eq:manyedgestree}
e(\mathcal{H}(A,B))> \frac{\epsilon^2}{2}|B|^2,
\end{equation} 
we will generate its children by applying the following procedure:
\begin{enumerate}[label=(\alph*)]
\item \label{item:checkcont} Apply the asymmetric container lemma (Theorem~\ref{thm:container}) to $\mathcal{H}:=\mathcal{H}(A,B)$ setting $$R:=\frac{2}{\epsilon^2},\quad q:=\frac{m}{\log n},\quad b:=\sqrt{\frac{m}{\log n}}.$$ Notice that the co-degrees of $\mathcal{H}$ satisfy $$\max\big\{\Delta_{(1,0)}(\mathcal{H}),\Delta_{(0,1)}(\mathcal{H})\big\}\leq |B|= \frac{2}{\epsilon^2}\frac{\epsilon^2|B|^2}{2|B|}\leq R\frac{e(\mathcal H)}{|B|}$$ and $$\Delta_{(0,2)}(\mathcal{H})=\Delta_{(1,1)}(\mathcal{H})=\Delta_{(1,2)}(\mathcal{H})= 1= \frac{2}{\epsilon^2}\frac{b^2}{q|B|^2}\frac{\epsilon^2}{2}|B|^2\leq R\frac{b^2}{q|B|^2}e(\mathcal{H}),$$ since \eqref{eq:manyedgestree} holds. Since $b<q<|B|$, it follows that \[\Delta_{(0,2)}(\mathcal{H})\leq R\frac{b^2}{q|B|^2}e(\mathcal{H})\leq R\frac{b}{|B|^2}e(\mathcal{H}),\] \[\Delta_{(1,1)}(\mathcal{H})\leq R\frac{b^2}{q|B|^2}e(\mathcal{H})\leq R\frac{b}{q|B|}e(\mathcal{H})\] and \[\Delta_{(1,0)}(\mathcal{H})\leq R\frac{e(\mathcal H)}{|B|}\leq R\frac{e(\mathcal H)}{q},\] as required.
\item \label{item:propcontain}
By Theorem \ref{thm:container}, there exists a family $\mathcal{C}\subset 2^{(Y+Y)\setminus A}\times 2^{B}$ of at most 
\begin{equation}\label{nchild}
\binom{n^2}{b}\binom{|B|}{2b}\leq n^{4b}\leq e^{4\sqrt{m\log n}},
\end{equation}
pairs of sets $(C,D)$ that satisfies the conditions of the container lemma. That is for each independent pair $(I,J)\in \mathcal{F}_{\leq m}(\mathcal{H})$, with $I\subset Y+Y$ and $J\subset Y$, there is $(C,D)\in \mathcal{C}$ such that $C\subset I$ and $J\subset D$, and either $|C|\geq \delta\frac{m}{\log n}$, or $D\leq (1-\delta)|B|$.
\item For each $(C,D)\in \mathcal{C}$, let $\mathcal{H}(A\cup C,D)$ be a child of $\mathcal{H}(A,B)$ in the tree $\mathcal{T}$.
\end{enumerate}
Now to count the number of leaves of $\mathcal{T}$ we will first bound its depth.
\begin{lemma}\label{deept}
The tree $\mathcal{T}$ has depth at most $d=2^{14}\epsilon^{-2}\log n$.
\end{lemma}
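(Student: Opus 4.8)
The plan is to bound the number of edges on an arbitrary path from the root to a leaf of $\mathcal{T}$ by splitting those edges into two classes, according to which alternative of Theorem~\ref{thm:container}\ref{item:container-2} — equivalently, which case of step~\ref{item:propcontain} of the construction — is responsible for the corresponding child, and then bounding the size of each class separately.

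First I would fix such a path $\mathcal{H}(A_0,B_0),\mathcal{H}(A_1,B_1),\dots,\mathcal{H}(A_t,B_t)$, with $A_0=\emptyset$, $B_0=Y$, in which each node $\mathcal{H}(A_{i+1},B_{i+1})=\mathcal{H}(A_i\cup C_i,\,D_i)$ is a child of the \emph{internal} node $\mathcal{H}(A_i,B_i)$, coming from a pair $(C_i,D_i)$ in the container family that step~\ref{item:propcontain} produces for it. Two observations follow at once from the construction: the sets $A_i$ are nondecreasing and the sets $B_i$ nonincreasing along the path (since $C_i\subseteq (Y+Y)\setminus A_i$ and $D_i\subseteq B_i$), and every internal node satisfies $|A_i|\le m$ and $|B_i|>m/\log n$, because those are precisely the conditions under which a node is expanded. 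Writing $\delta=2^{-13}\epsilon^{2}$ for the constant returned by Theorem~\ref{thm:container} in step~\ref{item:checkcont} (applied there with $r_0=1$, $r_1=2$, $R=2/\epsilon^2$, $q=m/\log n$), the crucial input is that, by Theorem~\ref{thm:container}\ref{item:container-2} with $|V_1|=|B_i|$, for each $i<t$ either $|C_i|\ge\delta q$ — call $i$ a \emph{growth} step, so that $|A_{i+1}|=|A_i|+|C_i|\ge|A_i|+\delta m/\log n$ — or $|D_i|\le(1-\delta)|B_i|$ — call $i$ a \emph{shrink} step.

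Then I would count the two classes. For growth steps: the integers $|A_i|$ start at $0$, increase by at least $\delta m/\log n$ at each growth step, and never exceed $m$ at an internal node, so the path has at most $(\log n)/\delta$ of them. For shrink steps: $|B_i|$ starts at $n$, is multiplied by at most $1-\delta\le e^{-\delta}$ at each shrink step, and stays at least $1$ at every internal node; telescoping (using $\log\tfrac{1}{1-\delta}\ge\delta$) bounds their number by $(\log n)/\delta$ as well. Hence $t\le 2(\log n)/\delta$, and substituting $1/\delta=2^{13}\epsilon^{-2}$ gives $t\le 2^{14}\epsilon^{-2}\log n=d$, as claimed.

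There is no conceptual obstacle here — the lemma is essentially bookkeeping — so the points requiring attention are administrative: (i) checking that Theorem~\ref{thm:container}\ref{item:container-2} is invoked for the correct hypergraph $\mathcal{H}(A_i,B_i)$, with $|V_1|=|B_i|$, so that a shrink step really shrinks $|B_i|$ by a multiplicative factor (rather than additively, or relative to a fixed ground set); (ii) keeping straight that $|A_i|\le m$ and $|B_i|>m/\log n$ are guaranteed only at internal nodes, the terminal leaf being allowed to violate either one (which is exactly why the path terminates); and (iii) carrying the constants from $\delta=2^{-13}\epsilon^{2}$ and the two telescoping sums through to the stated value $2^{14}\epsilon^{-2}\log n$ (absorbing the lower-order additive terms, which is harmless since $\epsilon<\tfrac14$ and $m\ge(\log n)^2$).
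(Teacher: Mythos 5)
Your proposal is correct and uses essentially the same pigeonhole argument as the paper: classify each edge on a root-to-leaf path as a growth step (where $|A|$ gains at least $\delta q$) or a shrink step (where $|B|$ is scaled by $1-\delta$), then use $|A|\le m$ and $|B|>m/\log n\ge 1$ at internal nodes to bound each class by roughly $(\log n)/\delta = d/2$. The paper phrases it as a single pigeonhole over the first $d$ steps (``more than $d/2$ of one type gives a contradiction'') while you bound the two classes separately; the only thing to tidy is that the last step $t-1$ is accounted for once, not twice, which recovers the clean bound $t\le d$ rather than $d+O(1)$.
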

\begin{proof} We will prove that after $d$ iterations either $|A|>m$, $|B|\leq \frac{m}{\log n}$ $e(\mathcal{H}(A,B))\leq \frac{\epsilon^2}{2}|B|^2$. Notice that the $\delta$ provided by Theorem \ref{thm:container} in this application is $2^{-13}\epsilon^2$ and in each iteration either we increase the size of $A$ by $\delta q$ or we decrease the size of $B$ by $\delta |B|$. After $d$ iterations, either we would have increased the size of $A$ more than $\frac{d}{2}$ times, in which case \[|A|> \frac{d}{2}\delta q=\frac{2^{13}\log n}{\epsilon^2}2^{-13}\epsilon^2\frac{m}{\log n}=m,\]
or we would have reduced the size of $B$ at least $\frac{d}{2}$ times, in which case
\[|B|\leq (1-\delta)^{\frac{d}{2}}n< e^{-\frac{\delta d}{2}}n\leq e^{-\log n }n=1.\]
In either case, we would have stopped already by this point because we only generate children of $\mathcal{H}(A,B)$ if $|A|\leq m$, $|B|>\frac{m}{\log n}$ and \eqref{eq:manyedgestree} holds.
\end{proof}
\par 
\begin{proof}[Proof of Theorem \ref{thm:mcontainer}]\par 
Let $\mathcal{L}$ be the set of leaves of the tree $\mathcal{T}$ constructed above, and define \[\mathcal{A}:=\{(A,B):A\subset Y+Y,~B\subset Y, ~\mathcal{H}(A,B)\in \mathcal{L},~|A|\leq m\}.\]
Notice that for every $(A,B)\in \mathcal{A}$, we have either the bound $e(\mathcal{H}(A,B))\leq \frac{\epsilon^2}{2}|B|^2$ or $|B|\leq \frac{m}{\log n}$, since they come from the leaves of $\mathcal{T}$ and $|A|\leq m$. Since the edges of $\mathcal{H}(A,B)$ correspond exactly to pairs $a,b\in B$ such that $a+b\not\in A$, it follows that $\mathcal{A}$ has property \ref{item:fewedges}. \par
To bound the size of $\mathcal{A}$, notice that the number of leaves of the tree $\mathcal{T}$ is at most $Z^d$ where $Z$ denotes the maximum number of children of a vertex of the tree and $d$ denotes its depth. Thus, by \eqref{nchild} and Lemma \ref{deept},
\[|\mathcal{A}|\leq |\mathcal{L}|\leq Z^{d}\leq \exp\Big(2^{16}\frac{1}{\epsilon^2}\sqrt{m}(\log n)^{3/2}\Big),\]
so $\mathcal{A}$ satisfies \eqref{item:Asmall}, as required. \par
Finally, observe that for every pair of sets $J\subset Y,~I\subset Y+Y$ with $J+J\subset I$ and $|I|\leq m$, there is $(A,B)\in \mathcal{A}$ such that $A\subset I$ and $J\subset B$. Indeed $(I,J)\in \mathcal{F}_{\leq m}\big(\mathcal{H}(\emptyset, Y)\big)$ and therefore, by property \ref{item:propcontain} of our containers, there exists a path from the root to a leaf of $\mathcal{T}$ such that $A\subset I$ and $J\subset B$ for every vertex $\mathcal{H}(A,B)$ of the path, so \ref{item:everyind} holds. 
\end{proof}
\section{Typical Structure Result}
In this section we use Theorem \ref{thm:mcontainer} to determine the typical structure of a set $J\subset [n]$ of a given size with doubling constant $K$.
\begin{thm}\label{thm:stability}
Let $s,n$ be integers, let $2\leq K\leq \frac{s}{2^{120}(\log n)^3}$, and let $J\subset [n]$ be a uniformly chosen random set with $|J|=s$ and $|J+J|\leq K|J|$. With probability at least $1-\exp(-K^{1/6}s^{5/6}\sqrt{\log n})$ the following holds: there is a set $T\subset J$, of size $|T|\leq 2^{15} K^{1/6}s^{5/6}\sqrt{\log n}$, such that $J\setminus T$ is contained in an arithmetic progression of size \[\frac{Ks}{2}+2^{17}K^{7/6}s^{5/6}\sqrt{\log n}.\]
\end{thm}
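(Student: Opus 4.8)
The plan is to deduce Theorem~\ref{thm:stability} from Theorem~\ref{thm:mcontainer} together with the stability form of supersaturation, Corollary~\ref{satstability}, exactly in the spirit in which Theorem~\ref{thm:main} was deduced from Theorem~\ref{thm:mcontainer} and Corollary~\ref{supersat}. First I would apply Theorem~\ref{thm:mcontainer} with $m=Ks$ and a parameter $\epsilon=\epsilon_0$ of the form $c\,K^{1/6}s^{-1/6}\sqrt{\log n}$ for a suitable absolute constant $c$ (the hypothesis $K\le s/(2^{120}(\log n)^3)$ guarantees $\epsilon_0<2^{-10}$ and $m\ge(\log n)^2$). This produces a family $\mathcal{A}$ with $|\mathcal{A}|\le\exp\big(2^{16}\epsilon_0^{-2}\sqrt{Ks}(\log n)^{3/2}\big)$ such that, for each $s$-set $J$ with $|J+J|\le Ks$, there is $(A,B)\in\mathcal{A}$ with $A\subseteq J+J$, $J\subseteq B$, $|A|\le Ks$, and either $|B|\le Ks/\log n$ or at most $\epsilon_0^2|B|^2$ pairs $(b_1,b_2)\in B^2$ satisfy $b_1+b_2\notin A$.

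Next I would split the containers according to $|B|$: call $(A,B)$ \emph{large} if $|B|\ge(1-\epsilon_0)\tfrac{Ks}2$ and \emph{small} otherwise. For a large container the edge bound and Corollary~\ref{supersat} (with $\beta=1$, as $G=\mathbb{Z}$) force $|B|=(1+O(\epsilon_0))\tfrac{Ks}2$, so Corollary~\ref{satstability} applies with an $\epsilon$ equal to a constant multiple of $\epsilon_0$; option~(a) there is incompatible with the edge bound, so option~(b) holds and there is an arithmetic progression $P$ of length at most $\tfrac{Ks}2+O(\epsilon_0Ks)\le\tfrac{Ks}2+2^{17}K^{7/6}s^{5/6}\sqrt{\log n}$ with $R:=B\setminus P$ of size $|R|\le O(\epsilon_0Ks)$. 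Say $J$ is \emph{good} if some $T\subseteq J$ with $|T|\le\tau:=2^{15}K^{1/6}s^{5/6}\sqrt{\log n}$ has $J\setminus T$ contained in an arithmetic progression of length at most $\tfrac{Ks}2+2^{17}K^{7/6}s^{5/6}\sqrt{\log n}$. If $J$ lies in a large container and $|J\cap R|\le\tau$, then $T:=J\cap R$ and the progression $P$ witness that $J$ is good; so a bad $J$ in a large container must have $|J\cap R|>\tau$.

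I would then estimate the number of bad $J$. Those in small containers number at most $|\mathcal{A}|\binom{(1-\epsilon_0)Ks/2}{s}\le|\mathcal{A}|\,(1-\epsilon_0)^{s}\binom{Ks/2}{s}\le|\mathcal{A}|\,e^{-\epsilon_0s}\binom{Ks/2}{s}$. For bad $J$ in large containers: since $|R|\le O(\epsilon_0Ks)$ and $\tau=2^{15}K^{1/6}s^{5/6}\sqrt{\log n}$, one has $|R|<\tau$ once $K$ is below an absolute constant, so there are none in that range; and when $K$ is larger, $P$ is long enough that $|B\cap P|-s\ge\Omega(Ks)$, whence for a fixed large container
\[
\#\{J\subseteq B:|J|=s,\ |J\cap R|>\tau\}\le\sum_{t>\tau}\binom{|R|}{t}\binom{|B\cap P|}{s-t}\le\sum_{t>\tau}\Big(\frac{e\nu}{t}\Big)^{t}\binom{|B\cap P|}{s},
\]
with $\nu:=\tfrac{|R|\,s}{|B\cap P|-s}=O(\epsilon_0s)$. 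Choosing the constant $c$ in $\epsilon_0$ so that $e\nu<\tau$, this tail is $e^{-\Omega(\tau)}\binom{|B\cap P|}{s}$, and since $|B\cap P|\le\tfrac{Ks}2+O(\epsilon_0Ks)$ we have $\binom{|B\cap P|}{s}\le e^{O(\epsilon_0s)}\binom{Ks/2}{s}$, so a single large container contributes at most $e^{-2^{14}K^{1/6}s^{5/6}\sqrt{\log n}}\binom{Ks/2}{s}$ bad sets. Summing over $\mathcal{A}$, using $|\mathcal{A}|\le e^{2^{4}K^{1/6}s^{5/6}\sqrt{\log n}}$ for the chosen $\epsilon_0$ together with $\epsilon_0s=\Theta(K^{1/6}s^{5/6}\sqrt{\log n})$ of constant exceeding that exponent, the total number of bad $J$ is at most $e^{-\Omega(K^{1/6}s^{5/6}\sqrt{\log n})}\binom{Ks/2}{s}$. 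Dividing by the lower bound $N\ge\tfrac{K-2}{K}\binom{Ks/2}{s}\ge\tfrac13\binom{Ks/2}{s}$ (for $K\ge3$) on the number of $s$-sets with $|J+J|\le Ks$, obtained by taking all $s$-subsets of a progression of length $\lfloor Ks/2\rfloor$, yields the stated probability bound; the boundary $K$ close to $2$ is trivial because the estimate above already forces strictly fewer than one bad set.

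The one genuinely delicate point is the choice of the constant $c$ in $\epsilon_0=c\,K^{1/6}s^{-1/6}\sqrt{\log n}$: it must be large enough that $\epsilon_0s$ dominates the exponent $2^{16}\epsilon_0^{-2}\sqrt{Ks}(\log n)^{3/2}$ of $|\mathcal{A}|$ (forcing $c$ above an absolute threshold), yet small enough that $\tau=2^{15}K^{1/6}s^{5/6}\sqrt{\log n}$ exceeds $e\nu=\Theta(\epsilon_0s)$ in the binomial tail (forcing $c$ below a larger absolute threshold). These two constraints leave a nonempty window, so a single value of $c$ works throughout $2\le K\le s/(2^{120}(\log n)^3)$; everything else is routine tracking of absolute constants, with the same care as in the counting for Theorem~\ref{thm:main} that every inequality retains enough slack to absorb the $2^{o(s)}$-type error terms.
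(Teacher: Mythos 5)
Your proposal is correct and follows essentially the same route as the paper: apply Theorem~\ref{thm:mcontainer} with $m=Ks$ and $\epsilon$ of order $K^{1/6}s^{-1/6}\sqrt{\log n}$, classify the container pairs $(A,B)$ into small ones (where $|B|\le(1-\epsilon)Ks/2$) and large ones (where Corollary~\ref{supersat} pins down $|B|$ and Corollary~\ref{satstability} forces $B$ to sit almost inside an arithmetic progression), then count the bad sets $J$ by a binomial-tail bound on $|J\cap R|$ and divide by the trivial lower bound $\binom{\lfloor Ks/2\rfloor}{s}$. The only difference from the paper is cosmetic: the paper phrases the dichotomy via an explicit ``$(\epsilon,Ks)$-close to an arithmetic progression'' predicate and works with the fixed constant $\epsilon=2^6(K/s)^{1/6}\sqrt{\log n}$, whereas you leave $c$ as a free parameter and argue for the existence of a valid window; both yield the theorem once the constants are tracked. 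One small correction: the factor $\tfrac{K-2}{K}$ in your lower bound for the denominator is unnecessary (and degenerates as $K\to2$); the paper simply uses $\binom{Ks/2}{s}$, which is what the progression of length $\lfloor Ks/2\rfloor$ provides up to a harmless $1-o(1)$ factor, and the $K$-near-$2$ case is vacuous because small containers then have $|B|<s$ and large containers have $|R|<\tau$, so there are no bad sets at all.
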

The proof of Theorem \ref{thm:stability} is similar to that of Theorem~\ref{thm:main}, but we use Corollary~\ref{satstability} as well as Corollary~\ref{supersat}.
\begin{proof}[Proof of Theorem \ref{thm:stability}]
Let $G:=\mathbb{Z}$ and apply Theorem \ref{thm:mcontainer} to the set $Y:=[n]$ with $m:=Ks$ and $\epsilon>0$ to be chosen later. We say $B\subset [n]$ is $(\epsilon,Ks)$\textit{-close to an arithmetic progression} if there is an arithmetic progression $P$ with $|P|\leq \frac{Ks}{2}+2^5\epsilon Ks$, and a set $T\subset B$ with $|T|\leq 2^5\epsilon |B|$ such that $B\setminus T \subset P$. We claim that if $\mathcal{A}$ is the family provided by Theorem~\ref{thm:mcontainer}, then for every pair $(A,B)\in \mathcal{A}$ either 
\begin{enumerate}[label=(\Roman*)]
\item \label{item:smallcont} $|B|\leq (1-\epsilon)\frac{Ks}{2}$ or
\item \label{item:contcloseap} $B$ is $(\epsilon,Ks)$-close to an arithmetic progression. 
\end{enumerate}
To see this, note first that, by condition \ref{item:fewedges} in Theorem 4.2, for every pair $(A,B)\in \mathcal{A}$ either there are at most $\epsilon ^2|B|^2$ pairs $b_1,b_2\in B$ with $b_1+b_2\not\in A$ or $|B|\leq \frac{m}{\log n}$, and so, by Corollary \ref{supersat}, $|B|\leq (1+2\epsilon)\frac{Ks}{2}$. Now, if \ref{item:smallcont} doesn't hold, that is $|B|\geq (1-\epsilon)\frac{Ks}{2}$, then, by Corollary \ref{satstability}, \ref{item:contcloseap} holds, since there are at most $\epsilon^2|B|^2< 4\epsilon^2K^2s^2$ pairs $b_1,b_2\in B$ such that $b_1+b_2\not\in A$.\par
Now we will count the number of sets $J$ of size $s$ and doubling constant $K$ such that $J$ is not $(2^4\epsilon,Ks)$-close to an arithmetic progression.
Recall from Theorem \ref{thm:mcontainer} \ref{item:everyind} that, for any such set, there exists $(A,B) \in \mathcal{A}$ such that $J \subset B$. Now, observe that there are at most $|\mathcal{A}|\binom{(1-\epsilon)\frac{Ks}{2}}{s}$ sets $J$ of size $s$ that are contained in a set $B$ such that $(A,B)\in \mathcal{A}$ and $|B|\leq (1-\epsilon)\frac{Ks}{2}$. Choosing $\epsilon :=2^{6}{(\frac{K}{s})}^{1/6}\sqrt{\log n}<2^{-10}$ and using the bound \eqref{item:Asmall} on the size of $\mathcal{A}$, we obtain
\begin{align}\label{eq:multAsmall}
\begin{split}
| \mathcal{A}|\binom{(1-\epsilon)\frac{Ks}{2}}{s} & \leq \exp \big(2^{16}\epsilon^{-2}\sqrt{Ks}(\log n)^{3/2}-\epsilon s\big)\binom{\frac{Ks}{2}}{s}\\ &\leq \exp\big(-2^5K^{1/6}s^{5/6}(\log n)^{1/2}\big)\binom{\frac{Ks}{2}}{s}.
\end{split}
\end{align}\par
Finally we count the number of sets $J$ of size $s$ that are not $(2^4\epsilon,Ks)$-close to an arithmetic progression and are contained in a set $B$ such that $(A,B)\in \mathcal{A}$ and $B$ is $(\epsilon,Ks)$-close to an arithmetic progression. For each such $B$, let $P$ be an arithmetic progression with $|P|\leq \frac{Ks}{2}+2^5\epsilon Ks$, and $T\subset B$ be a set with $|T|\leq 2^5\epsilon |B|\leq 2^5\epsilon Ks$, such that $B\setminus T \subset P$. Observe that, there at most
\begin{equation}\label{eq:farAPfinal}
\sum_{s'\geq 2^9\epsilon s}\binom{(1+2\epsilon)\frac{Ks}{2}}{s-s'}\binom{2^5\epsilon Ks}{s'}
\end{equation}
$s$-sets $J\subset B$ that are not $(2^4\epsilon,Ks)$-close to an arithmetic progression, since they must have $s-s'$ elements in $B\setminus T$ and $s'$ elements in $T$ for some $s'\geq 2^9\epsilon s$. Indeed, otherwise $J\setminus T\subset P$, with $|P|\leq Ks+2^9\epsilon Ks$ and $|J\cap T|<2^9\epsilon |J|$. 
To bound this we will use \[\binom{a}{c-d}\binom{b}{d}\leq \binom{a}{c}\Big(\frac{4bc}{ad}\Big)^d,\]
valid for $d\leq c\leq a/4$. Note that, by our choice of $\epsilon$, we have $|\mathcal{A}|\leq e^{\epsilon s}$ (cf. \eqref{eq:multAsmall}). Hence summing \eqref{eq:farAPfinal} over $(A,B)\in \mathcal{A}$ we obtain\footnote{We remark that if $K<16$ then $\binom{2^5\epsilon Ks}{s'}=0$ for all $s'\geq 2^9\epsilon s$, so we may suppose that $K\geq 16$.}
\begin{align}\label{eq:final}
\begin{split}
|\mathcal{A}| \cdot s\max_{s'\geq 2^9\epsilon s}(1+4\epsilon)^s\binom{\frac{Ks}{2}}{s-s'}\binom{2^5\epsilon Ks}{s'}& \leq |\mathcal{A}|\cdot s\max_{s'\geq 2^9\epsilon s}(1+4\epsilon)^s \binom{\frac{Ks}{2}}{s}\Big(\frac{2^8\epsilon s}{s'}\Big)^{s'} \\ \leq \Big(\frac{2^8\epsilon s}{2^9\epsilon s}\Big)^{2^9\epsilon s}2^{6\epsilon s}\binom{\frac{Ks}{2}}{s} & \leq \exp\big( -2^{11}K^{1/6}s^{5/6}\sqrt{\log n}\big)\binom{\frac{Ks}{2}}{s}.
\end{split}
\end{align}
Finally observe that the bound \eqref{eq:multAsmall} and \eqref{eq:final} imply the probability we claimed in the statement since, by taking all subsets of size $s$ of an arithmetic progression of length $\frac{Ks}{2}$, there are at least $\binom{\frac{Ks}{2}}{s}$ sets of size $s$ and doubling constant $K$.
\end{proof}

\section*{Acknowledgements}
The author would like to thank Rob Morris for his thorough comments on the manuscript and many helpful discussions. We would also like to thank Mauricio Collares, Victor Souza and Natasha Morrison for interesting discussions and comments on early versions of this manuscript. The author is also very grateful to Hoi Nguyen for spotting a mistake in and suggesting various improvements to the first version of this manuscript.

\bibliography{my}{}
\bibliographystyle{ijmart}

\appendix
\section{The proof of Theorem \ref{thm:container}}
In this appendix we provide, for completeness, a proof of Theorem \ref{thm:container}. The proof given below is essentially identical to that presented in~\cite{morris2018asymmetric} with some adaptations to the notation. We would like to thank the authors of~\cite{morris2018asymmetric} for allowing us to reproduce their proof in this appendix.

\subsection{Setup}
Let $r_0$ and $r_1$ be non-negative integers and let $R$ be a positive real. Let $b$, $m$, and $r$ be positive integers and suppose that $\HH$ is a $(r_0, r_1)$-bounded hypergraph\footnote{We remark that from now on all hypergraphs are allowed to have multi-edges, and the edges are counted with multiplicity.} with vertex set $V=(V_0,V_1)$ satisfying~\eqref{deg cond} for each pair $(\ell_0, \ell_1)$ and  $b\leq \min\{m,|V_1|\}$ as in the statement of Theorem~\ref{thm:container}. We claim that, without loss of generality, denoting from now on $v_0(\mathcal{H})=|V_0|$ and $v_1(\mathcal{H})=|V_1|$, we may assume that $m \le v_0(\HH)$. Indeed, if $m > v_0(\HH)$, then we may replace $m$ with $v_0(\HH)$ as $\FF_{\le m} \subseteq \FF(\HH) = \FF_{\le v_0(\HH)}(\HH)$ and the right-hand side of~\eqref{deg cond} is a non-increasing function of $m$.
We shall be working only with hypergraphs whose uniformities come from the set 
\[
  \UU := \big\{ (1,0), (2,0), \ldots, (r_0,0), (r_0,1), \ldots, (r_0,r_1) \big\}.
\]
The maximum codegrees we must check for each uniformity will come from the set
\[
  \VV(i_0,i_1) := \big\{ 0, 1, \ldots, i_0\big\}\times \big\{0, 1, \ldots, i_1 \big\}\setminus \{(0,0)\}.
\]
We now define a collection of numbers that will be upper bounds on the maximum degrees of the hypergraphs constructed by our algorithm. To be more precise, for each $(i_0, i_1) \in \UU$ and all $(\ell_0, \ell_1)\in \VV(i_0,i_1)$, we shall force the maximum $(\ell_0, \ell_1)$-degree of the $(i_0, i_1)$-uniform hypergraph not to exceed the quantity $\Del$, defined as follows.

\begin{defn}
  \label{dfn:Delta}
  For every $(i_0, i_1) \in \UU$ and every $(\ell_0, \ell_1) \in \VV(i_0,i_1)$, we define the number $\Del$ using the following recursion:
  \begin{enumerate}[label=(\arabic*)]
  \item 
    Set $\Delta_{(\ell_0, \ell_1)}^{(r_0,r_1)} := \Delta_{(\ell_0, \ell_1)}(\HH)$ for all $(\ell_0, \ell_1) \in \VV(r_0,r_1)$.\smallskip
  \item
    If $i_0 = r_0$ and $0 \le i_1 < r_1$, then
    \[
      \Del := \max \left\{ 2 \cdot \Delf{}{+1}{}{+1}, \, \frac{b}{v_1(\HH)} \cdot \Delf{}{}{}{+1} \right\}.
    \]
  \item
    If $0 < i_0 < r_0$ and $i_1 = 0$, then
    \[
      \Del := \max \left\{ 2 \cdot \Delf{+1}{}{+1}{}, \, \frac{b}{m} \cdot \Delf{}{}{+1}{} \right\}.
    \]
  \end{enumerate}
\end{defn}

The above recursive definition will be convenient in some parts of the analysis. In other parts, we shall require the following explicit formula for $\Del$, which one easily derives from Definition~\ref{dfn:Delta} using a straightforward induction on $r_0 + r_1 - i_0 - i_1$.

\begin{obs}
  \label{obs:Delta}
  For all $i_0$, $i_1$, $\ell_0$, and $\ell_1$ as in Definition~\ref{dfn:Delta},
  \[
    \Del = \max \left\{ 2^{d_0+d_1} \left(\frac{b}{v_1(\HH)}\right)^{r_1-i_1-d_1} \left(\frac{b}{m}\right)^{r_0-i_0-d_0} \Delta_{(\ell_0+d_0, \ell_1+d_1)}(\HH) \scolon 0 \le d_j \le r_j-i_j \right\}.
  \]
\end{obs}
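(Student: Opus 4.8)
The statement to prove is Observation~\ref{obs:Delta}, which gives an explicit closed form for the numbers $\Del$ defined by the recursion in Definition~\ref{dfn:Delta}. My plan is to verify the formula by induction on the quantity $r_0 + r_1 - i_0 - i_1$, exactly as the text suggests, peeling off one step of the recursion at a time and matching it against the claimed maximum over parameters $d_0, d_1$.

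\textbf{Base case.} When $r_0 + r_1 - i_0 - i_1 = 0$ we have $(i_0, i_1) = (r_0, r_1)$, so by clause~(1) of Definition~\ref{dfn:Delta} we have $\Delta_{(\ell_0,\ell_1)}^{(r_0,r_1)} = \Delta_{(\ell_0,\ell_1)}(\HH)$. On the other side, the only admissible choice is $d_0 = d_1 = 0$ (since $0 \le d_j \le r_j - i_j = 0$), and then the expression in Observation~\ref{obs:Delta} collapses to $2^0 \cdot 1 \cdot 1 \cdot \Delta_{(\ell_0,\ell_1)}(\HH)$, matching.

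\textbf{Inductive step.} Assume the formula holds for all pairs $(i_0', i_1')$ with $r_0 + r_1 - i_0' - i_1' < r_0 + r_1 - i_0 - i_1$, and consider $(i_0, i_1) \in \UU$ with $(i_0,i_1) \neq (r_0, r_1)$. Two cases arise from the structure of $\UU$ and Definition~\ref{dfn:Delta}. If $i_0 = r_0$ and $i_1 < r_1$, apply clause~(2): $\Del = \max\{\,2 \Delf{}{+1}{}{+1},\ \frac{b}{v_1(\HH)} \Delf{}{}{}{+1}\,\}$. Both $\Delf{}{+1}{}{+1}$ (shifting $\ell_1$ and $i_1$ up by one, with $\ell_0$ unchanged) and $\Delta^{(i_0,i_1+1)}_{(\ell_0,\ell_1)}$ have a larger value of $i_1$, hence a smaller value of $r_0+r_1-i_0-i_1$, so the inductive hypothesis applies to each. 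Substituting the explicit formula for these two terms and distributing the factors $2$ and $b/v_1(\HH)$ over the respective maxima, one sees that the first term contributes exactly the subfamily of $(d_0,d_1)$ with $d_1 \ge 1$ (reindexing $d_1 \mapsto d_1 - 1$, noting $r_1 - i_1 - d_1 = r_1 - (i_1+1) - (d_1 - 1)$), and the second term contributes the subfamily with $d_1 = 0$. Since $r_0 = i_0$ forces $d_0 = 0$ throughout in this case, the union of the two subfamilies is exactly $\{(d_0,d_1) : 0 \le d_j \le r_j - i_j\}$, so the overall maximum agrees with Observation~\ref{obs:Delta}. The case $0 < i_0 < r_0$, $i_1 = 0$ is symmetric, using clause~(3) and the factor $b/m$: the term $2\Delf{+1}{}{+1}{}$ produces the $d_0 \ge 1$ part (after reindexing $d_0 \mapsto d_0 - 1$) and $\frac{b}{m}\Delf{}{}{+1}{}$ produces the $d_0 = 0$ part; here $i_1 = 0 = r_1$ would force $d_1 = 0$, but in fact since $(i_0, 0) \in \UU$ with $i_0 < r_0$ the relevant children $(i_0+1, 0)$ still have $i_1 = 0$, and the previously-established formula (valid once $i_0+1$ may equal $r_0$, opening up the $i_1$-direction) supplies the general $d_1$ range — so care is needed to chain through to the $(r_0, \cdot)$ layer, but the bookkeeping is routine.

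\textbf{Main obstacle.} The only real subtlety is the index bookkeeping when the recursion passes through the "corner" $(r_0, 0)$: clauses~(2) and~(3) carve up $\UU$ into an $i_0$-decreasing segment (with $i_1 = 0$) and an $i_1$-decreasing segment (with $i_0 = r_0$), and one must make sure that applying the inductive hypothesis along one segment correctly inherits the full $(d_0, d_1)$-range generated along the other. Concretely, the $d_1$-range for a pair with $i_1 = 0$, $i_0 < r_0$ is genuinely nontrivial and is "built up" by first climbing to $i_0 = r_0$; verifying that the nested maxima telescope correctly across this corner is where one should be careful, though no real difficulty lurks there — it is a finite, deterministic unwinding. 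Everything else is a direct substitution-and-distribute-over-max computation.
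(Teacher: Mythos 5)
Your proof is correct and carries out exactly the induction on $r_0+r_1-i_0-i_1$ that the paper asserts is a ``straightforward'' derivation without writing it out. The bookkeeping at the corner $(r_0,0)$ is handled properly: once the formula is established along the $i_0=r_0$ segment of $\UU$, the full $d_1$-range $\{0,\ldots,r_1\}$ is already present and is inherited unchanged down the $i_1=0$ segment, so the only degenerate direction on each segment is the one you identified.
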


For future reference, we note the following two simple corollaries of Observation~\ref{obs:Delta} and our assumptions on the maximum degrees of $\HH$, see~\eqref{deg cond}. Suppose that $(i_0, i_1) \in \UU$. If $i_1 > 0$, then necessarily $i_0 = r_0$ and hence,
\begin{equation}
  \label{eq:Delta01}
  \begin{split}
    \Delta_{(0,1)}^{(i_0,i_1)} & \le \max \left\{ 2^{d_1} \left(\frac{b}{v_1(\HH)}\right)^{r_1-i_1-d_1} R \cdot \frac{b^{d_1}}{v_1(\HH)^{d_1+1}} \cdot e(\HH) \scolon 0 \le d_1 \le r_1 - i_1\right\} \\
    & \le 2^{r_1}R \left(\frac{b}{v_1(\HH)}\right)^{r_1-i_1} \frac{e(\HH)}{v_1(\HH)} = 2^{r_1}R \left(\frac{b}{v_1(\HH)}\right)^{r_1-i_1} \left(\frac{b}{m}\right)^{r_0-i_0} \frac{e(\HH)}{v_1(\HH)}.
  \end{split}  
\end{equation}
Moreover, if $i_0 > 0$ and $i_1=0$, then 
\begin{equation}
  \label{eq:Delta10}
  \begin{split}
    \Delta_{(1,0)}^{(i_0,i_1)} & \le \max \left\{ 2^{d_0+d_1} \left(\frac{b}{v_1(\HH)}\right)^{r_1-d_1}\left(\frac{b}{m}\right)^{r_0-i_0-d_0} R \cdot \frac{b^{d_0+d_1}}{m^{d_0} \cdot v_1(\HH)^{d_1}} \cdot \frac{e(\HH)}{q} \right\}\\
    & \le 2^{r_0+r_1}R \left(\frac{b}{v_1(\HH)}\right)^{r_1} \left(\frac{b}{m}\right)^{r_0-i_0} \frac{e(\HH)}{q},
  \end{split}
\end{equation}
where the maximum is over all pairs $(d_0, d_1)$ of integers satisfying $0 \le d_j \le r_j - i_j$.\par
We will build a sequence of hypergraphs with decreasing uniformity, starting with $\HH$, and making sure that, for each hypergraph $\GG$ in the sequence, we have an appropriate bound on its maximum codegrees. To this end we define the following set of pairs with large codegree.
\begin{defn}
  \label{dfn:MDel}
  Given $(i_0, i_1) \in \UU$, $(\ell_0, \ell_1) \in \VV(i_0,i_1)$, and an $(i_0, i_1)$-uniform hypergraph $\GG$, we define
  \[
    \MDel(\GG) = \left\{ (T_0, T_1) \in \binom{V(\GG)}{\ell_0} \times \binom{V(\GG)}{\ell_1} \scolon \deg_\GG(T_0,T_1) \ge \frac{1}{2} \cdot \Del \right\}.
  \]
\end{defn}
Finally, let us say that $c \in \{0,1\}$ is \emph{compatible} with $(i_0, i_1) \in \UU$ if the unique pair $(i_0', i_1')  \in \UU \cup \{(0,0)\}$ with $i_0' + i_1' = i_0 + i_1 - 1$ satisfies $i_c' = i_c - 1$ (and $i_{1-c}' = i_{1-c}$). By the definition of $\UU$, it follows that $c=1$ for $(i_0, i_1) \in \UU$ if and only if $i_1 > 0$.

\subsection{The algorithm}
\label{sec:algorithm}

We shall now define precisely a single round of the algorithm we use to prove the container lemma. To this end, fix some $(i_0, i_1) \in \UU$ and a compatible $c \in \{0,1\}$ and (as in the definition of a compatible $c$) set
\begin{equation}
  \label{eq:i-prime}
  i_c' = i_c - 1 \qquad \text{and} \qquad i_{1-c}' = i_{1-c}.
\end{equation}
Suppose that $\GG$ is an $(i_0, i_1)$-bounded hypergraph with $V(\GG) = V(\HH)$. A single round of the algorithm takes as input an arbitrary $(I,J) \in \FF(\GG)$ and outputs an $(i_0', i_1')$-bounded hypergraph $\GG^*$ satisfying $V(\GG^*) = V(\GG)$ and $(I,J) \in \FF(\GG^*)$ as well as a~set of vertices $S$ of $\GG$ such that $|S|\leq b$ and either $S\subset J$ or $S\subset V_0\setminus I$. Crucially, the number of possible outputs of the algorithm (over all possible inputs $(I,J) \in \FF(\GG)$) is at most $\binom{v_c(\mathcal{H})}{\le b}$.

Assume that there is an implicit linear order $\preccurlyeq$ on $V(\GG)$. The \emph{$c$-maximum vertex} of a hypergraph $\cA$ with $V(\cA) = V(\GG)$ is the $\preccurlyeq$-smallest vertex among those $v$ that maximise $|\{(A_0, A_1) \in \cA \scolon v \in A_c\}|$.

\medskip

\noindent
\textbf{The algorithm.}
Set $\cA^{(0)} := \GG$, let $S$ be the empty set, and let $\GGn^{(0)}$ be the empty $(i_0',i_1')$-bounded hypergraph on $V(\GG)$. Do the following for each integer $j \ge 0$ in turn:
\begin{enumerate}[label={(S\arabic*)}]
\item
  \label{item:alg-stop}
  If $|S| = b$ or $\cA^{(j)}$ is empty, then set $J := j$ and \STOP.
\item
  \label{item:alg-c-maximum}
  Let $v_j\in V_c$ be the $c$-maximum vertex of $\cA^{(j)}$.
\item
  \label{item:alg-main-step}
  If $c=0$ and $v_j\not\in I$ or $c=1$ and $v_j\in J$, then add $j$ to the set $S$ and let
  \[
    \GGn^{(j+1)} := \GGn^{(j)} \cup \Big\{ \big( A_0 \setminus \{v_j\},  A_1 \setminus \{v_j\}\big) \scolon (A_0, A_1) \in \cA^{(j)} \text{ and } v_j \in A_c \Big\}.
  \]
\item
  \label{item:alg-cleanup}
  Let $\cA^{(j+1)}$ be the hypergraph obtained from $\cA^{(j)}$ by removing from it all pairs $(A_0, A_1)$ such that either of the following hold:
  \begin{enumerate}[label={(\alph*)}]
  \item
    \label{item:cleanup-1}
    $v_j \in A_c$;
  \item
    \label{item:cleanup-2}
    there exist $T_0 \subseteq A_0$ and $T_1 \subseteq A_1$, not both empty, such that 
    \[
      (T_0, T_1) \in \MDelp\big( \GGn^{(j+1)} \big)
    \]
 for some $(\ell_0,\ell_1)\in \mathcal{V}(i_0',i_1')$.
  \end{enumerate}
\end{enumerate}
Finally, set $\cA := \cA^{(L)}$ and $\GGn := \GGn^{(L)}$. Moreover, set
\[
  W := \big\{ 0, \dotsc, L-1 \big\} \setminus S = \Big\{ j \in \big\{ 0, \dotsc, L-1 \big\} \scolon v_j\not\in V_0\setminus I \textsl{ and }v_j\not \in J\Big\}.
\]

\smallskip

Observe that the algorithm always stops after at most $v(\GG)$ iterations of the main loop. Indeed, since all constraints $(A_0, A_1)$ with $v_j \in A_c$ are removed from $\cA^{(j+1)}$ in part~\ref{item:cleanup-1} of step~\ref{item:alg-cleanup}, the vertex $v_j$ cannot be the $c$-maximum vertex of any $\cA^{(j')}$ with $j' > j$ and hence the map $\{0, \dotsc, L-1\} \ni j \mapsto v_j \in V(\GG)$ is injective. 

\subsection{The analysis}

We shall now establish some basic properties of the algorithm described in the previous subsection. To this end, let us fix some $(i_0, i_1) \in \UU$ and a compatible $c \in \{0,1\}$ and let $i_0'$ and $i_1'$ be the numbers defined in~\eqref{eq:i-prime}. Moreover, suppose that $\GG$ is an $(i_0, i_1)$-bounded hypergraph and that we have run the algorithm with input $(I,J) \in \FF(\GG)$ and obtained the $(i_0', i_1')$-bounded hypergraph $\GGn$, the integer $L$, the injective map $\{0, \dotsc, L-1\} \ni j \mapsto v_j \in V(\GG)$, and the partition of $\{0, \dotsc, L-1\}$ into $S$ and $W$ such that $v_j\in J$ or $v_j\in V_0\setminus I$ if and only if $j \in S$. We first state two straightforward, but fundamental, properties of the algorithm.

\begin{obs}
  \label{obs:h-in-FF-Gn}
  If $(I,J) \in \FF(\GG)$, then $(I,J) \in \FF(\GGn)$.
\end{obs}

\begin{proof}
  Observe that $\GGn$ contains only constraints of the form:
  \begin{enumerate}[label={(\textit{\roman*})}]
  \item
    \label{item:constraint-i}
    $(A_0 \setminus \{v\}, A_1)$, where $v \in A_0$ and $v\in V_0\setminus I$, or
  \item
    \label{item:constraint-ii}
    $(A_0, A_1 \setminus \{v\})$, where $v \in A_1$ and $v\in J$,
  \end{enumerate}
  where $(A_0, A_1) \in \GG$, see~\ref{item:alg-main-step}. Hence, if $(I,J)$ violated a constraint of type~\ref{item:constraint-i} (resp.~\ref{item:constraint-ii}) then $(I,J)$ would also violate the constraint $(A_0, A_1)$, as $v\in V_0\setminus I$ (resp.\ $v\in J$).
\end{proof}

The next observation says that if the algorithm applied to two pairs $(I,J)$ and $(I',J')$ outputs the same set $\{v_j \scolon j \in S\}$, then the rest of the output is also the same. 

\begin{obs}
  \label{obs:number-of-containers}
Fix the hypergraph $\GG$ we input in the algorithm, suppose that the algorithm applied to $(I',J') \in \FF(\GG)$ outputs a hypergraph~$\GGn'$, an integer $L'$, a map $j \mapsto v_j'$, and a partition of $\{0, \dotsc, L'-1\}$ into $S'$ and $W'$. If $\{v_j \scolon j \in S\} = \{v_j' \scolon j \in S'\}$, then $\GGn = \GGn'$, $L = L'$, $v_j = v_j'$ for all $j$, and $W = W'$.
\end{obs}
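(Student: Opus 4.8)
The plan is to prove Observation~\ref{obs:number-of-containers} by tracking the execution of the algorithm step-by-step and showing that the entire run is determined by the set $\{v_j \scolon j \in S\}$ together with the fixed hypergraph $\GG$ and the fixed linear order $\preccurlyeq$. The key point is that, although the algorithm reads $(I,J)$, the only way it uses $(I,J)$ is through the test in step~\ref{item:alg-main-step}: at iteration $j$, once the $c$-maximum vertex $v_j$ of $\cA^{(j)}$ has been chosen, the algorithm asks whether ``$c=0$ and $v_j \notin I$'' or ``$c=1$ and $v_j \in J$'', i.e.\ whether $j \in S$. So if we already know which iterations put their vertex into $S$, we know the outcome of every such test, and hence we can replay the run without consulting $(I,J)$ at all.

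First I would set up an induction on $j$ asserting the following invariant: for every $j$ with $0 \le j \le \min\{L, L'\}$, the hypergraphs $\cA^{(j)} = (\cA')^{(j)}$ and $\GGn^{(j)} = (\GGn')^{(j)}$ agree, the prefixes $v_0, \dotsc, v_{j-1}$ equal $v_0', \dotsc, v_{j-1}'$, and $S \cap \{0, \dotsc, j-1\} = S' \cap \{0, \dotsc, j-1\}$. The base case $j=0$ is immediate: $\cA^{(0)} = \GG = (\cA')^{(0)}$, both $\GGn^{(0)}$ are the empty $(i_0',i_1')$-bounded hypergraph on $V(\GG)$, and $S$ and $S'$ both start empty. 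For the inductive step, assume the invariant holds at $j$. Step~\ref{item:alg-stop} then behaves identically in both runs (the stopping condition $|S|=b$ or $\cA^{(j)}$ empty depends only on quantities already shown equal), so either both runs stop at $j$ — in which case we are nearly done — or both continue. If both continue, step~\ref{item:alg-c-maximum} selects $v_j$ as the $\preccurlyeq$-smallest maximiser of $|\{(A_0,A_1)\in\cA^{(j)} \scolon v\in A_c\}|$; since $\cA^{(j)} = (\cA')^{(j)}$ and $\preccurlyeq$ is fixed, $v_j = v_j'$.

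Next, in step~\ref{item:alg-main-step}, I would argue that the membership test agrees: the run puts $j$ into $S$ exactly when the hypothesis of step~\ref{item:alg-main-step} holds; I need to conclude from $\{v_j \scolon j \in S\} = \{v_j' \scolon j \in S'\}$ together with the already-matched prefix that $j \in S \iff j \in S'$. This is where the hypothesis is used: by the inductive prefix agreement, $v_0, \dotsc, v_j$ and $v_0', \dotsc, v_j'$ coincide as sequences; since the map $k \mapsto v_k$ is injective (as noted right after the algorithm, because any vertex placed in $A_c$ is removed in step~\ref{item:alg-cleanup}\ref{item:cleanup-1}), $j \in S$ iff $v_j \in \{v_k \scolon k \in S\}$, and likewise $j \in S'$ iff $v_j' = v_j \in \{v_k' \scolon k \in S'\} = \{v_k \scolon k \in S\}$ — the same condition. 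Hence $j \in S \iff j \in S'$, so step~\ref{item:alg-main-step} updates $\GGn^{(j+1)}$ and $(\GGn')^{(j+1)}$ by the same rule applied to the same $\cA^{(j)}$ and the same $v_j$, giving $\GGn^{(j+1)} = (\GGn')^{(j+1)}$. Finally, step~\ref{item:alg-cleanup} removes pairs from $\cA^{(j)}$ according to conditions \ref{item:cleanup-1} (involving only $v_j$ and $A_c$) and \ref{item:cleanup-2} (involving only $\GGn^{(j+1)}$ and the fixed quantities $\MDelp$), all of which now match, so $\cA^{(j+1)} = (\cA')^{(j+1)}$, completing the inductive step. To finish, since both runs perform identical tests and updates at every iteration, they stop at the same $L = L'$, and then $\GGn = \GGn^{(L)} = (\GGn')^{(L)} = \GGn'$, $v_j = v_j'$ throughout, $S = S'$, and hence $W = \{0,\dots,L-1\}\setminus S = \{0,\dots,L-1\}\setminus S' = W'$.

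I do not expect a serious obstacle here; the statement is essentially a bookkeeping consequence of the algorithm's definition. The one point that needs care — and the closest thing to a ``hard part'' — is the step linking the hypothesis $\{v_j \scolon j \in S\} = \{v_j' \scolon j \in S'\}$ to the per-iteration equivalence $j \in S \iff j \in S'$: this genuinely relies on the injectivity of $k \mapsto v_k$ (and, symmetrically, of $k \mapsto v_k'$) together with the inductively maintained fact that the two vertex sequences coincide on their common prefix, so that one cannot have $v_j$ appear in $S$ at a different index in the two runs. Once that is in place, everything else is a routine double induction that the two executions remain in lockstep.
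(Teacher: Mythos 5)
Your proof is correct and takes essentially the same approach as the paper's (which compresses the same argument into two sentences: the only $(I,J)$-dependent step is \ref{item:alg-main-step}, and the execution is therefore determined by $\GG$ and $\{v_j : j \in S\}$). Your version just unwinds that into an explicit lockstep induction, correctly pinpointing the one non-trivial bookkeeping step — that injectivity of $k \mapsto v_k$ converts the hypothesis on the unordered sets $\{v_j : j\in S\}$ and $\{v'_j : j\in S'\}$ into the per-iteration equivalence $j\in S \iff j\in S'$ needed to advance the induction.
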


\begin{proof}
  The only step of the algorithm that depends on the input pair $(I,J)$ is~\ref{item:alg-main-step}. There, an index $j$ is added to the set $S$ if and only if $v_j\in V_0\setminus I$ or $v_j\in J$. Therefore, the execution of the algorithm depends only on the set $\{v_j \scolon j \in S\}$ and the hypergraph $\GG$.
\end{proof}

The next two lemmas will allow us to maintain suitable upper and lower bounds on the degrees and densities of the hypergraphs obtained by applying the algorithm iteratively. The first lemma, which is the easier of the two, states that if all the maximum degrees of $\GG$ are appropriately bounded, then all the maximum degrees of $\GGn$ are also appropriately bounded.

\begin{lemma}\label{lemma:alg-analysis-degrees}
Given $(\ell_0, \ell_1) \in \VV(i_0,i_1)$ and $\ell_c > 0$, set $\ell_c' = \ell_c-1$ and $\ell_{1-c}' = \ell_{1-c}$. If $\Delta_{(\ell_0,\ell_1)}(\GG) \le \Del$, then $\Delta_{(\ell_0',\ell_1')}(\GGn) \le \Delp$.
\end{lemma}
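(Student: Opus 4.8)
The plan is to fix a pair $(T_0', T_1') \in \binom{V(\GGn)}{\ell_0'} \times \binom{V(\GGn)}{\ell_1'}$ that realises the maximum $(\ell_0', \ell_1')$-codegree of $\GGn$ and to bound $\deg_\GGn(T_0', T_1')$ by charging each constraint of $\GGn$ containing $(T_0',T_1')$ back to a constraint of $\GG$. Recall from step~\ref{item:alg-main-step} that every constraint $(B_0, B_1) \in \GGn$ arises from a unique constraint $(A_0, A_1) \in \cA^{(j)} \subseteq \GG$ by deleting the $c$-maximum vertex $v_j$ from the $c$-th coordinate: if $c = 0$ then $(B_0, B_1) = (A_0 \setminus \{v_j\}, A_1)$ with $v_j \in A_0 \setminus I$, and if $c=1$ then $(B_0, B_1) = (A_0, A_1 \setminus \{v_j\})$ with $v_j \in A_1 \cap J$. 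In either case $A_c = B_c \cup \{v_j\}$ and $A_{1-c} = B_{1-c}$. So a constraint of $\GGn$ containing $(T_0', T_1')$ lifts to a constraint of $\GG$ containing $(T_0'', T_1'')$, where $T_c'' := T_c' \cup \{v_j\}$ and $T_{1-c}'' := T_{1-c}'$, and by the hypotheses $|T_c''| = \ell_c' + 1 = \ell_c$ and $|T_{1-c}''| = \ell_{1-c}' = \ell_{1-c}$, so $(T_0'', T_1'') \in \binom{V(\GG)}{\ell_0} \times \binom{V(\GG)}{\ell_1}$.

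The subtlety is that different constraints of $\GGn$ containing $(T_0', T_1')$ may be born at different steps $j$ and hence involve different deleted vertices $v_j$; moreover several constraints of $\GGn$ could a priori lift to the same constraint of $\GG$. To control this I would split the count according to the deleted vertex: for each $j \in S$, the constraints of $\GGn$ that were added at step $j$ and contain $(T_0', T_1')$ all lift to distinct constraints of $\GG$ containing $(T_0' \cup \{v_j\}, T_1')$ (if $c=0$) or $(T_0', T_1' \cup \{v_j\})$ (if $c=1$), so their number is at most $\Delta_{(\ell_0, \ell_1)}(\GG) \le \Del$. This would only give a factor $|S| \le b$ loss, which is not good enough directly; instead the right move is to observe that once a vertex $u$ has been used as $v_j$, the cleanup step~\ref{item:cleanup-2} removes from $\cA^{(j+1)}$ every constraint $(A_0,A_1)$ possessing a subpair lying in $\MDelp(\GGn^{(j+1)})$ — in particular, once $\deg_\GGn(T_0', T_1')$ reaches $\frac12 \Delp$, any further constraint of $\cA$ whose deletion would contain $(T_0',T_1')$ gets discarded before it can contribute. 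Hence the degree of any fixed $(\ell_0',\ell_1')$-pair in $\GGn$ can exceed $\frac12\Delp$ by at most the number of constraints added in the single step $j$ that pushed it over the threshold, which is at most $\Delta_{(\ell_0,\ell_1)}(\GG) \le \Del$. So $\Delta_{(\ell_0',\ell_1')}(\GGn) \le \tfrac12 \Delp + \Del$.

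It remains to check that $\tfrac12 \Delp + \Del \le \Delp$, i.e.\ that $\Del \le \tfrac12 \Delp$. This is exactly what Definition~\ref{dfn:Delta} is engineered to guarantee: in case~(2) of that definition (when $c = 1$, so $i_c' = i_1 - 1$ and we are reducing the second coordinate) one has $\Delp \ge 2 \cdot \Delf{}{+1}{}{+1}$, which with $(\ell_0,\ell_1) = (\ell_0', \ell_1'+1)$ reads $\Delta_{(\ell_0',\ell_1'+1)}^{(i_0,i_1)} \ge 2\,\Delta_{(\ell_0',\ell_1'+1)}^{(i_0,i_1'+1)}$... more to the point, with $c=1$ we have $(i_0,i_1) = (i_0', i_1'+1)$ and $(\ell_0,\ell_1) = (\ell_0',\ell_1'+1)$, and case~(2) gives $\Delp = \Delta_{(\ell_0',\ell_1')}^{(i_0,i_1'+1)} \ge 2\,\Delta_{(\ell_0',\ell_1'+1)}^{(i_0,i_1)} = 2\,\Del$; the case $c=0$ is symmetric using case~(3). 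Thus $2\,\Del \le \Delp$, and combining gives $\Delta_{(\ell_0',\ell_1')}(\GGn) \le \Delp$, as claimed.

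\emph{Expected main obstacle.} The delicate point is the bookkeeping in the middle paragraph: making precise that the cleanup step~\ref{item:cleanup-2} is checked \emph{after} each vertex is processed, so that the codegree of $(T_0',T_1')$ in the running hypergraph $\GGn^{(j)}$ stays below $\frac12\Delp$ until the very step at which it is exceeded, and then arguing that a single step contributes at most $\Del$ new constraints to that codegree — this requires carefully tracking which $\cA^{(j)}$ each constraint comes from and invoking the injectivity of $j \mapsto v_j$. Everything else (the lifting correspondence and the numerical inequality $2\Del \le \Delp$) is routine given Definition~\ref{dfn:Delta}.
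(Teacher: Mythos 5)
Your proposal is correct and is essentially the paper's own argument, merely reorganized as a direct bound rather than a contradiction: both hinge on the same three observations, namely that a single step adds at most $\deg_{\GG}(T_c'\cup\{v_j\},T_{1-c}')\le \Del$ constraints containing $(T_0',T_1')$, that the cleanup step~\ref{item:cleanup-2} freezes this codegree once it first exceeds $\tfrac12\Delp$, and that $2\Del\le\Delp$ by Definition~\ref{dfn:Delta}. (A small slip: in your final paragraph the identity should read $\Delp=\Delta_{(\ell_0',\ell_1')}^{(i_0',i_1')}\ge 2\,\Delta_{(\ell_0',\ell_1'+1)}^{(i_0',i_1'+1)}=2\,\Del$, with $i_1'$ in the upper index of the left-hand side rather than $i_1'+1$.)
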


\begin{proof}
Suppose (for a contradiction) that there exist sets $T_0'$ and $T_1'$, with $|T_0'| = \ell_0'$ and $|T_1'| = \ell_1'$, such that $\deg_{\GGn}(T_0', T_1') > \Delp$. Let $j$ be the smallest integer satisfying
  \[
    \deg_{\GGn^{(j+1)}}(T_0', T_1') > \frac{1}{2} \cdot \Delp
  \]
  and note that $j \ge 0$, since $\GGn^{(0)}$ is empty. We claim first that
   \begin{equation}
    \label{eq:deg-j-is-deg-final}
    \deg_{\GGn} (T_0', T_1') = \deg_{\GGn^{(j+1)}}(T_0', T_1').
  \end{equation}
  Indeed, observe that $(T_0', T_1') \in M^{(i_0',i_1')}_{(\ell_0',\ell_1')} \big( \GGn^{(j+1)} \big)$, and therefore the algorithm removes from $\cA^{(j)}$ (when forming $\cA^{(j+1)}$ in step~\ref{item:alg-cleanup}) all pairs $(A_0, A_1)$ such that $T_0' \subseteq A_0$ and $T_1' \subseteq A_1$. As a consequence, no further pairs $(A_0', A_1')$ with $T_0' \subseteq A_0'$ and $T_1' \subseteq A_1'$ are added to $\GGn$ in step~\ref{item:alg-main-step}.

  We next claim that
  \begin{equation}
    \label{eq:one-step-deg-change}
    \deg_{\GGn^{(j+1)}}(T_0', T_1') - \deg_{\GGn^{(j)}}(T_0', T_1') \le \Del.
  \end{equation}
  To see this, recall that when we extend $\GGn^{(j)}$ to $\GGn^{(j+1)}$ in step~\ref{item:alg-main-step}, we only add pairs $\big( A_0 \setminus \{v_j\}, A_1 \setminus \{v_j\} \big)$ such that $(A_0, A_1) \in \cA^{(j)} \subseteq \GG$ and $v_j \in A_c$. Therefore, setting $T_c = T_c' \cup \{v_j\}$ and $T_{1-c} = T_{1-c}'$, we have
  \begin{equation*}
    \deg_{\GGn^{(j+1)}}(T_0', T_1') - \deg_{\GGn^{(j)}}(T_0', T_1') \le \deg_{\GG}(T_0, T_1) \le \Delta_{(\ell_0, \ell_1)}(\GG) \le \Del,   
  \end{equation*}
  where the last inequality is by our assumption, as claimed.

  Combining~\eqref{eq:deg-j-is-deg-final} and~\eqref{eq:one-step-deg-change}, it follows immediately that
  \[
    \deg_{\GGn}(T_0',T_1') \le \frac{1}{2} \cdot \Delp + \Del \le \Delp,
  \]
  where the final inequality holds by Definition~\ref{dfn:Delta}. This contradicts our choice of $(T_0', T_1')$ and therefore the lemma follows.
\end{proof}

We are now ready for the final lemma, which is really the heart of the matter. We will show that if $\GG$ has sufficiently many edges and all of the maximum degrees of $\GG$ are appropriately bounded, then either the output hypergraph~$\GGn$ has sufficiently many edges, or we either have a big set $W\subset V_1\setminus J$, or we have a big set $W\subset I$. We remark that here we shall use the assumption that $|I|\leq m$.

\begin{lemma}
  \label{lemma:alg-analysis-progress}
  Suppose that $|I| \le m$ and let $\alpha > 0$. If 
  \begin{enumerate}[label=(A\arabic*)]
  \item
    \label{item:assumption-edges}
    $e(\GG) \ge \alpha \cdot \big( \frac{b}{v_1(\HH)} \big)^{r_1-i_1} \big( \frac{b}{m} \big)^{r_0-i_0} e(\HH)$ and\smallskip
     \item
    \label{item:assumption-Delta}
    $\Delta_{(\ell_0, \ell_1)}(\GG) \le \Del$ for every $(\ell_0, \ell_1) \in \VV(i_0,i_1)$,\smallskip
  \end{enumerate}
  then at least one of the following statements is true:
  \begin{enumerate}[label=(P\arabic*)]
  \item
    \label{item:reduce-uniformity}
    $e(\GGn) \ge 2^{-i_0-i_1-1} \alpha \cdot \big( \frac{b}{v_1(\HH)} \big)^{r_1-i_1'} \big( \frac{b}{m} \big)^{r_0-i_0'} e(\HH)$.\smallskip
  \item
    \label{item:determine-many-0}
    $c = 1$ and $|W| \ge 2^{-r_1-1} R^{-1} \alpha \cdot v_1(\HH)$.\smallskip
  \item
    \label{item:determine-many-1}
    $c = 0$ and $|W| \ge 2^{-r_0-r_1-1} R^{-1} \alpha  \cdot q$.
  \end{enumerate}
\end{lemma}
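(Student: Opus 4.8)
The plan is to track the total number of edges of $\cA^{(j)}$ throughout the run of the algorithm, and to account for all the edges that are ``lost'' from $\cA^{(j)}$ in step~\ref{item:alg-cleanup}. At the start we have $e(\cA^{(0)}) = e(\GG)$, and when the algorithm stops we have $e(\cA^{(L)}) = e(\cA) \ge 0$; moreover $e(\cA) = 0$ unless the algorithm stopped because $|S| = b$. Each edge removed from $\cA^{(j)}$ when forming $\cA^{(j+1)}$ falls into one of two types: those removed in part~\ref{item:cleanup-1} (because $v_j \in A_c$), and those removed in part~\ref{item:cleanup-2} (because some subpair has large codegree in $\GGn^{(j+1)}$). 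First I would bound the total number of edges of the second type: by Lemma~\ref{lemma:alg-analysis-degrees} (applied at the end, or rather an analogous statement at intermediate stages) together with the bound $|T_c'| = \ell_c - 1$ on the sizes of the offending sets and the definition of $\MDelp$, the number of edges $(A_0,A_1) \in \GG$ having a subpair in $\MDelp(\GGn^{(j+1)})$ for some $j$ is controlled by $\sum_{(\ell_0,\ell_1)} \Delta_{(\ell_0,\ell_1)}(\GG) \cdot |\MDelp(\GGn)|$, and $|\MDelp(\GGn)|$ is in turn bounded using the upper bounds on $\Delta_{(\ell_0',\ell_1')}(\GGn)$ coming from Lemma~\ref{lemma:alg-analysis-degrees} and the total edge count $e(\GGn)$, via $|\MDelp(\GGn)| \le 2 e(\GGn) / \Delp$ (each edge of $\GGn$ can be counted at most $i_0' + i_1'$ times, but more carefully each $(\ell_0,\ell_1)$-subpair with degree $\ge \tfrac12\Delp$ contributes at least $\tfrac12\Delp$ edges).

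Second, the edges removed in part~\ref{item:cleanup-1} for indices $j \in S$ are exactly the edges that get their image pushed into $\GGn$ in step~\ref{item:alg-main-step}: each such edge $(A_0, A_1)$ with $v_j \in A_c$ yields the edge $(A_0 \setminus \{v_j\}, A_1 \setminus \{v_j\})$ of $\GGn$, so the number of such edges is at most $(i_0' + i_1' + 1)\,e(\GGn)$ or so — more precisely, a multiset bound of the form $\sum_{j \in S} |\{(A_0,A_1)\in\cA^{(j)} : v_j \in A_c\}| = e(\GGn)$ up to the multiplicity with which distinct $(A_0,A_1)$ can collapse to the same reduced edge, which is a fixed constant depending only on $r_0, r_1$. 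The edges removed in part~\ref{item:cleanup-1} for indices $j \in W$ are the genuinely ``wasted'' edges; here one uses that $v_j$ was chosen $c$-maximum, so it lies in at least an average fraction of the surviving edges, namely $|\{(A_0,A_1) \in \cA^{(j)} : v_j \in A_c\}| \ge i_c \cdot e(\cA^{(j)}) / v_c(\HH)$ (degree sum over the $\le v_c(\HH)$ vertices in coordinate $c$). Summing the resulting inequality $e(\cA^{(j+1)}) \le e(\cA^{(j)})(1 - i_c/v_c(\HH))$ over $j \in W$ and comparing to $e(\cA^{(0)}) = e(\GG)$, one gets that if $|W|$ is small then a definite fraction of $e(\GG)$ survives into $\cA^{(L)} \cup (\text{edges that went into } \GGn)$, forcing $e(\GGn)$ to be large — this is where the case split on whether $c=0$ or $c=1$, and the appearance of $q$ versus $v_1(\HH)$, enters, since $v_0(\HH) \ge m \ge q$ in the $c=0$ case and we lose an extra $m/q$ factor that is exactly absorbed by the $(m/q)$ term in~\eqref{deg cond}.

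Putting it together: we have the conservation identity $e(\GG) = e(\cA^{(L)}) + (\text{type-1 removals in } S) + (\text{type-1 removals in } W) + (\text{type-2 removals})$. If~\ref{item:reduce-uniformity} fails, then $e(\GGn)$ is small, so the type-1-in-$S$ removals and (via the codegree bounds and assumption~\ref{item:assumption-Delta}) the type-2 removals together account for at most, say, half of $e(\GG)$; since $e(\cA^{(L)})$ is either $0$ or corresponds to $|S| = b$ (in which case the type-1-in-$S$ count already handles it), the type-1-in-$W$ removals must account for at least a constant fraction of $e(\GG) \ge \alpha (b/v_1(\HH))^{r_1-i_1}(b/m)^{r_0-i_0} e(\HH)$; combined with the $c$-maximality lower bound on how many edges each $j \in W$ removes — which is at least a $1/v_c(\HH)$ fraction of the current edge count, and the current edge count stays $\ge$ a constant fraction of $e(\GG)$ throughout $W$ because we are assuming few removals — this forces $|W|$ to be large, giving~\ref{item:determine-many-0} or~\ref{item:determine-many-1} according to $c$. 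The main obstacle I anticipate is the bookkeeping in the second step: getting the constant $2^{-i_0-i_1-1}$ exactly right requires carefully distributing the ``budget'' of $e(\GG)$ between the $\GGn$-edges, the codegree-removed edges, and the $W$-removed edges, and making sure the recursion in Definition~\ref{dfn:Delta} (the factors of $2$ and $b/v_1(\HH)$, $b/m$) lines up so that assumption~\ref{item:assumption-Delta} at level $(i_0,i_1)$ feeds into the hypothesis of Lemma~\ref{lemma:alg-analysis-degrees} and the codegree-removal bound cleanly; the degree-sum / $c$-maximality step and the edge-conservation identity are routine once the constants are pinned down.
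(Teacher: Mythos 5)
Your overall framework — the edge-conservation identity for $e(\cA^{(j)})$, the split of removed edges into type~\ref{item:cleanup-1} versus type~\ref{item:cleanup-2}, the use of $c$-maximality to lower bound the degree of $v_j$, and bounding the type-\ref{item:cleanup-2} removals via $\big|\MDelp(\GGn)\big|$ — matches the paper's proof. But there is a genuine gap in how you handle the $c=0$ case, and it is exactly the step that the paper flags as ``the heart of the matter.''

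You propose to lower bound the degree of the $c$-maximum vertex by averaging over all vertices in coordinate $c$, namely $\big|\{(A_0,A_1)\in\cA^{(j)}: v_j\in A_c\}\big| \ge i_c\, e(\cA^{(j)})/v_c(\HH)$. For $c=1$ this is correct and is what the paper uses (equation~\eqref{eq:Delta-estimates-1}). For $c=0$, however, this gives $\Delta_{(1,0)}(\cA)\ge i_0\, e(\cA)/v_0(\HH)$, and $v_0(\HH)=|V_0|=|Y+Y|$ can be of order $n^2$, which is vastly larger than the quantity the conclusion requires. The lemma's statement and the degree bound~\eqref{eq:Delta10} involve $m$ and $q$, not $v_0(\HH)$, so the average-degree bound over $V_0$ is far too weak. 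Your remark that ``we lose an extra $m/q$ factor that is exactly absorbed by the $(m/q)$ term in~\eqref{deg cond}'' conflates two unrelated quantities: the factor you would lose by replacing $m$ with $v_0(\HH)$ is $v_0(\HH)/m$, which is unbounded and has nothing to do with $m/q$; the $m/q$ in the degree condition is what produces the $q$ (rather than $m$) in~\ref{item:determine-many-1} and is not available to compensate.

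The missing ingredient is the hypothesis $|I|\le m$, which you never invoke. The paper's key observation (equation~\eqref{eq:Delta-estimates-0}) is that when $c=0$ the hypergraph $\cA\subseteq\GG$ is $(i_0,0)$-uniform, so $(I,J)\in\FF(\cA)$ forces every edge $(A_0,\emptyset)$ of $\cA$ to intersect $I$; since $|I|\le m$, double counting gives $e(\cA)\le m\cdot\Delta_{(1,0)}(\cA)$, i.e.\ $\Delta_{(1,0)}(\cA)\ge e(\cA)/m$. This is the asymmetry between $V_0$ and $V_1$ that the whole asymmetric container lemma is built around: one cannot average over $V_0$ as one averages over $V_1$; instead one averages over the (much smaller) set $I$. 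Without this step your argument does not yield~\ref{item:determine-many-1} with the correct $q$-dependence, and the downstream iteration in the proof of Theorem~\ref{thm:mcontainer} would fail.

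A minor point in the same direction: your bound $\big|\MDelp(\GGn)\big|\le 2e(\GGn)/\Delp$ should use $\Delnp$ (same $(\ell_0,\ell_1)$, new uniformity $(i_0',i_1')$) together with a binomial coefficient factor $\binom{i_0'}{\ell_0}\binom{i_1'}{\ell_1}$; the paper then uses the recursion in Definition~\ref{dfn:Delta} to convert $\Del/\Delnp$ into $m/b$ or $v_1(\HH)/b$ according to $c$. Your sketch omits this ratio calculation, which is precisely where the recursion's $b/m$ versus $b/v_1(\HH)$ factors enter and where the constant $2^{-i_0-i_1-1}$ in~\ref{item:reduce-uniformity} is pinned down.
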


\begin{proof}
  Suppose first that $c = 0$ and observe that\footnote{Recall that $\GGn$ (and $\GGn^{(j)}$ etc.) are multi-hypergraphs and that edges are counted with multiplicity.}
  \begin{equation}
    \label{eq:eG-sum}
    e(\GGn) = \sum_{j \in S} \left( e(\GGn^{(j+1)}) - e(\GGn^{(j)}) \right) = \sum_{j \in S} \deg_{\cA^{(j)}}(\{v_j\}, \emptyset),
  \end{equation}
  since $e(\GGn^{(j+1)}) - e(\GGn^{(j)}) = \deg_{\cA^{(j)}}(\{v_j\}, \emptyset)$ for each $j \in S$ and $\GGn^{(j+1)} = \GGn^{(j)}$ for each $j \not\in S$. To bound the right-hand side of~\eqref{eq:eG-sum}, we count the edges removed from $\cA^{(j)}$ in \ref{item:cleanup-1} and \ref{item:cleanup-2} of step \ref{item:alg-cleanup}, which gives
  \[
    e(\cA^{(j)}) - e(\cA^{(j+1)}) \le \deg_{\cA^{(j)}}(\{v_j\}, \emptyset) + \sum_{(\ell_0, \ell_1)} \big| \MDelp(\GGn^{(j+1)}) \setminus \MDelp(\GGn^{(j)}) \big| \cdot \Delta_{(\ell_0, \ell_1)}(\GG).
  \]
  Summing over $j \in \{0, \ldots, L-1\}$, it follows (using~\eqref{eq:eG-sum}) that
  \[
    e(\GG) - e(\cA) \le e(\GGn) + |W| \cdot \Delta_{(1,0)}(\GG) + \sum_{(\ell_0, \ell_1)} \big| \MDelp(\GGn) \big| \cdot \Del,
  \]
  since $\cA = \cA^{(L)} \subseteq \ldots \subseteq \cA^{(0)} =\GG$ and $\Delta_{(\ell_0, \ell_1)}(\GG) \le \Del$ by~\ref{item:assumption-Delta}. Observe also that if $c = 1$, then we obtain an identical bound, with $\Delta_{(1,0)}(\GG)$ replaced by $\Delta_{(0,1)}(\GG)$.

  In order to discuss both cases simultaneously, we set $\chi(0) = (1,0)$ and $\chi(1) = (0,1)$. Observe that
  \begin{equation}
    \label{eq:e-G0-increment-estimate}
    \Delta_{\chi(c)}(\cA) \le \Delta_{\chi(c)}(\cA^{(j)}) \le \Delta_{\chi(c)}(\GG) \le \Delta_{\chi(c)}^{(i_0,i_1)},
  \end{equation}
  since $\cA \subseteq \cA^{(j)} \subseteq \GG$ and $\GG$ satisfies~\ref{item:assumption-Delta}. It follows that, for both $c \in \{0, 1\}$,
  \begin{equation}
    \label{eq:Gn-final-0}
    e(\GG) - e(\cA) \le e(\GGn) + |W| \cdot \Delta_{\chi(c)}^{(i_0,i_1)} + \sum_{(\ell_0, \ell_1)} \big|\MDelp(\GGn) \big| \cdot \Del.
  \end{equation}
  Now, recall that $v_j$ is the $c$-maximum vertex of $\cA^{(j)}$ and observe that therefore, by~\eqref{eq:eG-sum} and~\eqref{eq:e-G0-increment-estimate},  
  \begin{equation}
    \label{eq:e-Gn-S}
    e(\GGn) = \sum_{j \in S} \Delta_{\chi(c)}\big(\cA^{(j)}\big) \ge |S| \cdot \Delta_{\chi(c)}(\cA) = b \cdot \Delta_{\chi(c)}(\cA),
  \end{equation}
  where the equality is due to the fact that $|S| \neq b$ only when $\cA$ is empty, see step~\ref{item:alg-stop}. 
  
 Next, to bound the sum in~\eqref{eq:Gn-final-0}, observe that, by Definition~\ref{dfn:MDel}, we have
 \[
   \big| \MDelp(\GGn) \big| \cdot \frac{1}{2} \cdot \Delnp \le \sum_{|T_0|=l_0, |T_1|=l_1} \deg_{\GGn}(T_0, T_1) \leq  \binom{i_0'}{\ell_0} \binom{i_1'}{\ell_1} \cdot e(\GGn)
 \]
 for each $(\ell_0, \ell_1)\in \VV(i_0',i_1')$ and therefore
 \begin{equation}
   \label{eq:e-Gn-MDel-final}
   \begin{split}
     \sum_{(\ell_0, \ell_1)\in \VV(i_0',i_1')} \big| \MDelp(\GGn) \big| \cdot \Del & \le 2 \cdot \sum_{(\ell_0, \ell_1)} \binom{i_0'}{\ell_0} \binom{i_1'}{\ell_1} \cdot e(\GGn) \cdot \left(\Del / \Delnp\right)  \\
     & \le 2 \cdot \big( 2^{i_0'+i_1'} - 1 \big) \cdot e(\GGn) \cdot \max_{(\ell_0, \ell_1)} \left\{\Del / \Delnp\right\}.
   \end{split}
 \end{equation}
 We claim that $\Del / \Delnp \le m / b$ if $c = 0$ and $\Del / \Delnp \le v_1(\HH) / b$ if $c = 1$. Indeed, both inequalities following directly from Definition~\ref{dfn:Delta}, since if $c = 0$, then $(i_0', i_1') = (i_0-1, i_1)$, and if $c = 1$, then $(i_0', i_1') = (i_0, i_1-1)$. We split the remainder of the proof into two cases, depending on the value of $c$.

  Suppose first that $c = 1$ and observe that substituting~\eqref{eq:e-Gn-MDel-final} into~\eqref{eq:Gn-final-0} yields, using the bound $\Del / \Delnp \le v_1(\HH) / b$,
  \begin{equation}
    \label{eq:Gn-final-case1}
    e(\GG) - e(\cA) \le e(\GGn) + |W| \cdot \Delta_{(0,1)}^{(i_0,i_1)} + 2 \cdot \big( 2^{i_0'+i_1'} - 1 \big) \cdot e(\GGn) \cdot \frac{v_1(\HH)}{b}.
  \end{equation}
  Moreover, by~\eqref{eq:e-Gn-S}, and since $i_1 \ge 1$ when $c = 1$, we have
  \begin{equation}
    \label{eq:Delta-estimates-1}
    \frac{e(\GGn)}{b} \ge \Delta_{(0,1)}(\cA) \ge \frac{i_1 \cdot e(\cA)}{v_1(\cA)} \ge \frac{e(\cA)}{v_1(\HH)},
  \end{equation}
  since the maximum degree of a hypergraph is at least as large as its average degree. Combining~\eqref{eq:Gn-final-case1} and~\eqref{eq:Delta-estimates-1}, we obtain
  \begin{equation}
    \label{eq:eGG-eGGn-W-1}
    \begin{split}
      e(\GG) & \le e(\GGn) \cdot \frac{v_1(\HH)}{b} \cdot \left(\frac{b}{v_1(\HH)} + 1 + 2^{i_0'+i_1'+1} - 2\right) + |W| \cdot \Delta_{(0,1)}^{(i_0,i_1)} \\
      & \le e(\GGn) \cdot \frac{v_1(\HH)}{b} \cdot 2^{i_0+i_1} + |W| \cdot \Delta_{(0,1)}^{(i_0,i_1)},
    \end{split}
  \end{equation}
  since $b \le v_1(\HH)$. Now, if the first summand on the right-hand side of~\eqref{eq:eGG-eGGn-W-1} exceeds $e(\GG) / 2$, then~\ref{item:assumption-edges} implies~\ref{item:reduce-uniformity}, since $(i_0', i_1') = (i_0, i_1-1)$. Otherwise, the second summand is at least $e(\GG)/2$ and by~\ref{item:assumption-edges} and~\eqref{eq:Delta01},
  \[
    |W| \ge \frac{e(\GG)}{2 \cdot \Delta_{(0,1)}^{(i_0,i_1)}} \ge \frac{\alpha}{2^{r_1+1}R} \cdot v_1(\HH),
  \]
  which is~\ref{item:determine-many-0}.

  The case $c = 0$ is slightly more delicate; in particular, we will finally use our assumption that $|I| \le m$. Observe first that if $c = 0$, then $i_1=0$ and substituting~\eqref{eq:e-Gn-MDel-final} into~\eqref{eq:Gn-final-0} yields, using the bound $\Del / \Delnp \le m / b$,
 \begin{equation}
    \label{eq:Gn-final-case2}
    e(\GG) - e(\cA) \le e(\GGn) + |W| \cdot \Delta_{(1,0)}^{(i_0,i_1)} + \big( 2^{i_0+i_1} - 2 \big) \cdot e(\GGn) \cdot \frac{m}{b},
  \end{equation}
  cf.~\eqref{eq:Gn-final-case1}. We claim that
  \begin{equation}
    \label{eq:Delta-estimates-0}
    \frac{e(\GGn)}{b} \ge \Delta_{(1,0)}(\cA) \ge \frac{e(\cA)}{m}.
  \end{equation}
 The first inequality follows from~\eqref{eq:e-Gn-S}, so we only need to prove the second inequality. To do so, observe that $\GG$ is an $(i_0, 0)$-uniform hypergraph (since $c = 0$) and therefore for each pair $(I,J)\in \FF(\GG)$ we must have $I\cap A_0\neq \emptyset$ for every $(A_0, \emptyset) \in \GG$. Now, recall that $(I,J) \in \FF(\GG)$, that $\cA \subseteq \GG$, and that $|I|\leq m$. It follows that $e(\cA) \le m \cdot \Delta_{(1,0)}(\cA)$, as claimed.
  
 Combining~\eqref{eq:Gn-final-case2} and~\eqref{eq:Delta-estimates-0}, we obtain (cf.~\eqref{eq:eGG-eGGn-W-1})
  \begin{equation}
    \label{eq:eGG-eGGn-W-0}
    \begin{split}
      e(\GG) & \le e(\GGn) \cdot \frac{m}{b} \cdot \left(\frac{b}{m} + 1 + 2^{i_0 + i_1} - 2\right) + |W| \cdot \Delta_{(1,0)}^{(i_0,i_1)} \\
      & \le e(\GGn) \cdot \frac{m}{b} \cdot 2^{i_0+i_1} + |W| \cdot \Delta_{(1,0)}^{(i_0,i_1)},
    \end{split}
  \end{equation}
  since $b \le m$. Now, if the first summand on the right-hand side of~\eqref{eq:eGG-eGGn-W-1} exceeds $e(\GG) / 2$, then~\ref{item:assumption-edges} implies~\ref{item:reduce-uniformity}, since $(i_0', i_1') = (i_0-1, i_1)$. Otherwise, the second summand is at least $e(\GG)/2$ and by~\ref{item:assumption-edges} and~\eqref{eq:Delta10},
  \[
    |W| \ge \frac{e(\GG)}{2 \cdot \Delta_{(1,0)}^{(i_0,i_1)}} \ge \frac{\alpha}{2^{r_0+r_1+1}R} \cdot q,
  \]
  which is~\ref{item:determine-many-1}.
\end{proof}

\subsection{Construction of the container}
\label{sec:constr-container}

In this section, we present the construction of containers for pairs in $\FF_{\le m}(\HH)$ and analyse their properties, thus proving Theorem~\ref{thm:container}. For each $s \in \{0, \ldots, r_0+r_1\}$, define
\[
  \alpha_s = 2^{-s(r_0+r_1+1)} \qquad \text{and} \qquad \beta_s = \alpha_s \cdot \left(\frac{b}{v_1(\HH)}\right)^{\min\{{r_1,s\}}}\left(\frac{b}{m}\right)^{\max\{0,s-r_1\}}.
\]
Given an $(I,J) \in \FF_{\le m}(\HH)$, we construct the container $(A,B)$ for $(I,J)$ using the following procedure.

\medskip
\noindent
\textbf{Construction of the container.}
Let $\HH^{(r_0,r_1)} = \HH$, let $S_0 = S_1 = \emptyset$, and let $(i_0, i_1) = (r_0, r_1)$. Do the following for $s = 0, \ldots, r_0+r_1-1$:
\begin{enumerate}[label=(C\arabic*)]
\item
  Let $c \in \{0,1\}$ be the number that is compatible with $(i_0, i_1)$ and let $(i_0', i_1')$ be the pair defined by $i_c' = i_c-1$ and $i_{1-c}' = i_{1-c}$.
\item
  \label{item:constr-run-algorithm}  
  Run the algorithm with $\GG \leftarrow \HH^{(i_0, i_1)}$ to obtain the $(i_0', i_1')$-uniform hypergraph $\GGn$, the sequence $v_0, \ldots, v_{L-1} \in V(\HH)$, and the partition $\{0, 1, \ldots, L-1\} = S \cup W$.
\item
  \label{item:constr-update-signature}
  Let $S_c \leftarrow S_c \cup \{v_j \scolon j \in S\}$.
\item
  \label{item:constr-determine-many}
  If $e(\GGn) < \beta_{s+1} \cdot e(\HH)$, then define $(A,B)$, the container for $(I,J)$, by
  \[
    (A,B)=(W,\emptyset)
  \]
  if $c=0$ and
  \[(A,B)=(\emptyset,V_1\setminus W)\]
  if $c=1$, 
  and \STOP.
\item
  \label{item:constr-reduce-uniformity}
  Otherwise, let $\HH^{(i_0', i_1')} \leftarrow \GGn$ and $(i_0, i_1) \leftarrow (i_0', i_1')$ and \texttt{CONTINUE}.
\end{enumerate}

We will show that the above procedure indeed constructs containers for $\FF_{\le m}(\HH)$ that have the desired properties. To this end, we first claim that for each pair $(i_0, i_1) \in \UU \cup \{(0,0)\}$, the hypergraph $\HH^{(i_0,i_1)}$, if it was defined, satisfies:
\begin{enumerate}[label=(\textit{\roman*})]
\item
  \label{item:container-propty-1}
  $(I,J) \in \FF(\HH^{(i_0,i_1)})$ and
\item
  \label{item:container-propty-2}
  $\Delta_{(\ell_0,\ell_1)}(\HH^{(i_0,i_1)}) \le \Del$ for every $(\ell_0, \ell_1) \in \VV(i_0,i_1)$.
\end{enumerate}
Indeed, one may easily prove~\ref{item:container-propty-1} and~\ref{item:container-propty-2} by induction on $(r_0+r_1) - (i_0+i_1)$. The basis of the induction is trivial as $\HH^{(r_0,r_1)} = \HH$, see Definition~\ref{dfn:Delta}. The inductive step follows immediately from Observation~\ref{obs:h-in-FF-Gn} and Lemma~\ref{lemma:alg-analysis-degrees}.

Second, we claim that for each input $(I,J) \in \FF_{\le m}(\HH)$, step~\ref{item:constr-determine-many} is called for some $s$ and hence the container $ (A,B)$  is defined. If this were not true, the condition in step~\ref{item:constr-reduce-uniformity} would be met $r_0+r_1$ times and, consequently, we would finish with a non-empty $(0,0)$-uniform hypergraph $\HH^{(0,0)}$, i.e., we would have $(\emptyset, \emptyset) \in \HH^{(0,0)}$. But this contradicts~\ref{item:container-propty-1}, since pair satisfies the empty constraint and thus $(I,J) \not\in \FF(\HH^{(0,0)})$. 

Suppose, therefore, that step~\ref{item:constr-determine-many} is executed when $\GG = \HH^{(i_0,i_1)}$ for some $(i_0,i_1) \in \UU$, and note that $s = (r_0+r_1) - (i_0+i_1)$. We claim that $e(\HH^{(i_0,i_1)}) \ge \beta_s e(\HH)$. Indeed, this is trivial if $s = 0$, whereas if $s > 0$ and this were not true, then we would have executed step~\ref{item:constr-determine-many} at the previous step. We therefore have
\[
  e(\GG) = e(\HH^{(i_0,i_1)}) \ge \beta_s \cdot e(\HH) \qquad \text{and} \qquad e(\GGn) < \beta_{s+1} \cdot e(\HH),
\]
which, by Lemma~\ref{lemma:alg-analysis-progress} and~\ref{item:container-propty-2}, implies that either~\ref{item:determine-many-0} or~\ref{item:determine-many-1} of Lemma~\ref{lemma:alg-analysis-progress} holds. Note that if $c = 1$, then $r_1 \ge i_1 > 0$ and we have
\[
  |W| \ge 2^{-r_1-1}R^{-1} \alpha_s \cdot v_1(\HH) \ge \alpha_{r_0+r_1}R^{-1} v_1(\HH) = \delta v_1(\HH),
\]
where $\delta = 2^{-(r_0+r_1)(r_0+r_1+1)} R^{-1}$. On the other hand, if $c = 0$, then $r_0 \ge i_0 > 0$ and  
\[
  |W| \ge 2^{-r_0-r_1-1}R^{-1} \alpha_s \cdot q \ge \alpha_{r_0+r_1}R^{-1} q = \delta q.
\]
This verifies that $(A,B)$ satisfies property~\ref{item:container-2} from the statement of Theorem~\ref{thm:container}.

To complete the proof, we need to show that $(A,B)$ can be assigned to each $(I,J)$ by a pair of functions $f \circ g$ for some $g \colon \FF_{\le m}(\HH) \to \binom{V_0}{\le r_0 b} \times \binom{V_1}{\le r_1 b}$ and to verify that properties~\ref{item:container-1} and~\ref{item:container-3} from the statement of the theorem hold. We claim that one may take $g(I,J) = (S_0, S_1)$, where $S_0$ and $S_1$ are the sets constructed by the above procedure, see~\ref{item:constr-update-signature}. To this end, it suffices to show that if for some $(I,J), (I',J') \in \FF(\HH)$ the above procedure produces the same pair $(S_0, S_1)$, then $f \circ g(I,J)= f \circ g(I',J')$. To see this, observe first that the set $S$ defined in step~\ref{item:constr-run-algorithm} is precisely the set of all indices $j \in \{0, \ldots, L-1\}$ that satisfy $v_j \in S_c$. Indeed, the former set is contained in the latter by construction, see~\ref{item:constr-update-signature}. The reverse inclusion holds because 
\[
  S = \big\{ j \in \{ 0, \dotsc, L-1 \} \scolon v_j\in V_0\setminus I \textsl{ or }v_j\in J \big\}
\]
which is exactly the condition on $v \in S_c$. By Observation~\ref{obs:number-of-containers}, it follows that the output of the algorithm depends only on the pair $(S_0,S_1)$ and hence $(A,B)$, as claimed.

Finally, observe that $S_0 \subseteq V_0\setminus I$ and $S_1 \subseteq J$, by construction, $A\subset I$ and $J\subset B$. This verifies properties~\ref{item:container-1} and~\ref{item:container-3} and hence completes the proof of Theorem~\ref{thm:container}. \qed
\section{Proof of Theorem \ref{thm:hamserra}}
\begin{proof}[Proof of Theorem \ref{thm:hamserra}]\par 
Given an abelian group $G$ and finite subsets $A,B\subset G$, we will proceed by induction on $|B|$ to show that
\begin{equation*}
\sum _{x\in G} \min(1_A*1_B(x), t)\ge  t(|A|+|B|-t-\alpha),
\end{equation*}
for all integers $t\leq |B|$, where $\alpha:=\alpha(A,B)$. First, note that if $t=|B|=1$ then we have \[\sum _{x\in G} \min(1_A*1_B(x), t)=|A|\geq t(|A|-\alpha).\]\par 
Now take $B$ of size $|B|\geq 2$, and define $B'=B-b$ for some $b\in B$ and note that $0\in B'$. Suppose first that $B'+A\subset A$, and observe that in this case $A$ is an union of cosets of $\left< B'\right>$, that is, $A=\bigcup_{i=1}^k \left< B'\right> +h_i$ for some $h_1,...,h_k\in G$. It follows that $1_A*1_{B'}(x)\geq t$ for all $x\in A$, since if $x\in A\cap (\langle B'\rangle+h_i)$ then there are at least $|B'|\geq t$ sums $a+b'=x$ with $a\in A\cap (\left< B'\right> +h_i)$ and $b'\in B'$. Since $G=G-b$ it follows that
\begin{equation*}
\sum _{x\in G} \min(1_A*1_B(x), t)\ge  t|A|\ge  t(|A|+|B|-t-\alpha),
\end{equation*}
where the second inequality follows because $|\langle B'\rangle |\leq |A|$ and so $\alpha(A,B)\geq |B'|=|B|$.\par 
On the other hand, if $B'+A\not \subset A$ then there exists $a^*\in A$ such that $B^*=a^*+B'\not \subset A$ and therefore $1\leq |A\cap B^*|<|B|$. Define $C=A\cup B^*$, $D=A\cap B^*$, $A_1=A\setminus D$ and $B_1=B^*\setminus D$. Note that $1_A=1_{A_1}+1_D$ and $1_{B^*}=1_{B_1}+1_D$ and therefore, by the distributivity property of the convolution operation, 
\begin{multline}\label{eq:splitsum}
1_A*1_{B^*}(x)=(1_{A_1}+1_D)*(1_{B_1}+1_D)(x)\\=1_{A_1}*1_{B_1}(x)+(1_{A_1}+1_{B_1}+1_D)*1_D(x)=1_{A_1}*1_{B_1}(x)+1_C*1_D(x).
\end{multline}
In particular $1_A*1_{B^*}(x)\geq 1_C*1_D(x)$. If $|D|\ge t$ then by applying our induction hypothesis to $C$ and $D$, we obtain
\begin{equation*}
\sum _{x\in G} \min(1_A*1_B(x), t)\ge \sum _{x\in G} \min(1_C*1_D(x), t)\geq  t(|A|+|B|-t-\alpha),
\end{equation*}
where the first step follows since $G=G+a^*$, and the last step follows from the fact that $|C|+|D|=|A|+|B|$ and $\alpha(C,D)\leq \alpha(A,B)$, since $|D|\leq |B|$.\par 
Finally, if $|D|<t$, observe that
\begin{equation}\label{eq:indfinalcsesupersat}
\sum _{x\in G} \min(1_A*1_B(x), t)\geq \sum _{x\in G} \min(1_{A_1}*1_{B_1}(x), t-|D|)+\sum _{x\in G} \min(1_C*1_D(x), |D|),
\end{equation}
by \eqref{eq:splitsum}.
Because $|B_1|<|B|$ we can apply the induction hypothesis to $A_1$ and $B_1$, so the right hand side of \eqref{eq:indfinalcsesupersat} is at least 
\[ (t-|D|)\big(|A|+|B|-|D|-t-\alpha(A_1,B_1)\big)+|C||D|.\]
Noting that $\alpha(A_1,B_1)\leq \alpha(A,B)$, because $|B_1|\leq |B|$, and that $|A|+|B|-|D|=|C|$, it follows that the last expression is at least $t(|A|+|B|-t-\alpha),$ as required. 
\end{proof}

\section{Proof of Lower Bound for large $K$}
\begin{prop}
Let $n$ and $s$ be positive integers, and let $K,\epsilon>0$ and $C\geq 2$ satisfy $\min\{s,n^{1/2-\epsilon}\}\geq K\geq 4\frac{\log(24C)s}{\epsilon\log n}$. There are at least \[\binom{CKs}{s}\] sets $J\subset [n]$ with $|J|=s$ and $|J+J|\leq Ks$.
\end{prop}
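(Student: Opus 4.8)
The plan is to carry out the explicit construction mentioned in the introduction and then verify its two defining properties. Fix an arithmetic progression $P\subset[n]$ of length $\lfloor Ks/8\rfloor$, and consider all sets of the form $J=J_0\cup J_1$, where $J_0\subset P$ has size $s-\lfloor K/4\rfloor$ and $J_1\subset[n]\setminus P$ has size $\lfloor K/4\rfloor$. Since $J\cap P=J_0$ and $J\setminus P=J_1$, the assignment $(J_0,J_1)\mapsto J$ is injective, so the number of sets obtained in this way equals
\[
\binom{|P|}{s-\lfloor K/4\rfloor}\binom{n-|P|}{\lfloor K/4\rfloor},
\]
and since the hypothesis $K\le n^{1/2-\epsilon}$ forces $|P|\le n/2$, this is at least $\binom{Ks/8}{s-K/4}\binom{n/2}{K/4}$ (up to the harmless rounding of $K/4$).

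First I would check that every such $J$ satisfies $|J+J|\le Ks$. Writing $J+J=(J_0+J_0)\cup(J_0+J_1)\cup(J_1+J_1)$, one estimates the three pieces separately: $|J_0+J_0|\le|P+P|=2|P|-1\le Ks/4$, since the sumset of an arithmetic progression of length $\ell$ with itself has size $2\ell-1$; $|J_0+J_1|\le|J_0|\,|J_1|\le s\cdot K/4=Ks/4$ trivially; and $|J_1+J_1|\le|J_1|^2=K^2/16\le Ks/16$ because $K\le s$. Adding these gives $|J+J|\le 9Ks/16<Ks$, as needed.

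For the counting inequality I would use the standard bounds $\binom{a}{b}\ge(a/b)^b$ and $\binom{a}{b}\le(ea/b)^b$ to get
\[
\binom{Ks/8}{s-K/4}\binom{n/2}{K/4}\ge\Big(\tfrac{K}{8}\Big)^{s-K/4}\Big(\tfrac{2n}{K}\Big)^{K/4}\qquad\text{and}\qquad\binom{CKs}{s}\le(eCK)^s,
\]
and then compare logarithms. After expanding and cancelling the common term $s\log K$, the desired inequality reduces to $(K/4)\log(16n/K^2)\ge s\log(8eC)$. Now $K\le n^{1/2-\epsilon}$ gives $16n/K^2\ge 16n^{2\epsilon}$, hence $\log(16n/K^2)\ge 2\epsilon\log n$; the lower bound $K\ge 4\log(24C)s/(\epsilon\log n)$ then yields $(K/4)\cdot 2\epsilon\log n\ge 2\log(24C)\,s$; and finally $2\log(24C)\ge\log(8eC)$ for every $C\ge 2$, since $576C^2\ge 8eC$. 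This closes the argument.

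The construction itself and the sumset bound are genuinely routine. The only points that require a little care are the integrality issues --- one must make sure that $\lfloor K/4\rfloor\le s-\lfloor K/4\rfloor\le|P|$ and $\lfloor K/4\rfloor\le n-|P|$, which is where the full strength of the hypotheses (and the largeness of $n$ they entail) is used --- and the bookkeeping in the chain of logarithmic estimates in the final step; I do not anticipate any genuine obstacle beyond this.
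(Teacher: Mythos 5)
Your proof is correct and follows essentially the same route as the paper's: you use the identical construction $J=J_0\cup J_1$ with $J_0$ a large subset of an arithmetic progression $P$ of length about $Ks/8$ and $J_1$ a $\lfloor K/4\rfloor$-element set outside $P$, and you bound $|J+J|$ exactly as the paper does. The only difference is a minor computational one in the final counting step: the paper manipulates binomial coefficients directly via the inequalities $\binom{b}{d}\binom{a}{c-d}\geq(\frac{bc}{4ad})^d\binom{a}{c}$ and $a\binom{b}{c}\geq\binom{a^{1/c}b/e}{c}$, whereas you pass to logarithms using $\binom{a}{b}\geq(a/b)^b$ and $\binom{a}{b}\leq(ea/b)^b$, reducing to $(K/4)\log(16n/K^2)\geq s\log(8eC)$ and checking it with the same two hypotheses; both routes close cleanly and the arithmetic in your reduction is correct.
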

\begin{proof}
Choose $P$ to be an arithmetic progression of length $\frac{Ks}{8}$ and let $J=J_0\cup J_1$, with $J_0\subset P$ of size $s-\frac{K}{4}$ and $J_1\subset [n]\setminus P$ of size $\frac{K}{4}$. Then $J$ has doubling constant $K$ since \begin{align*}
|J+J| & \leq |J_0+J_0|+|J_0+J_1|+|J_1+J_1| \\ & \leq 2|P|+|J_0||J_1|+|J_1|^2 \leq \frac{Ks}{4}+\frac{Ks}{4}+\frac{K^2}{16} \leq Ks.
\end{align*}
Finally, by using that $\log (\frac{n}{K^2})\geq \epsilon\log n$ and the bounds \[\binom{b}{d}\binom{a}{c-d}\geq \Big(\frac{bc}{4ad}\Big)^{d}\binom{a}{c}\qquad  \text{and}\qquad a\binom{b}{c}\geq \binom{\frac{a^{1/c}b}{e}}{c}\] valid for  any positive integers $a,b,c,d$, such that $4d\leq c$, we have at least \[\binom{\frac{n}{2}}{\frac{K}{4}}\binom{\frac{Ks}{8}}{s-\frac{K}{4}}\geq \Big(\frac{n}{K^2}\Big)^{K/4} \binom{\frac{Ks}{8}}{s}\geq \binom{\exp( \frac{\epsilon K\log n}{4s})\frac{Ks}{8e}}{s}\] choices for $J$. In particular if $K\geq \frac{4\log(24C)s}{\epsilon\log n}$ this is at least $\binom{CKs}{s}$. 
\end{proof}
\end{document}